\definecolor{mylinkcolor}{rgb}{0.05,0.05,0.4}
\definecolor{mycitecolor}{rgb}{0.1,0.1,0.8}
\definecolor{myurlcolor}{rgb}{0.1,0.1,0.8}
\newcommand{\RV}{\mathrm{RV}}
\newcommand{\M}{\mathrm{M}}
\newcommand{\bfomega}{\mathbf{\Omega}}
\newtheorem{thm}{Theorem}[section]
\newtheorem{lem}[thm]{Lemma}
\newtheorem{cor}[thm]{Corollary}
\newtheorem{prop}[thm]{Proposition}
\theoremstyle{definition}
\newtheorem{rem}[thm]{Remark}
\newtheorem{eg}[thm]{Example}
\title{A categorical treatment of the Radon-Nikodym theorem and martingales}
\author{Ruben Van Belle \thanks{School of Mathematics, University of Edinburgh; ruben.van.belle@ed.ac.uk}}
\begin{document}
\maketitle
\begin{abstract}
%The Radon-Nikodym theorem is a result in measure theory that describes a correspondence between
%random variables and measures, using Radon-Nikodym derivatives. The result has many applications in probability theory and stochastic calculus. In particular, the result implies the existence of conditional expectation, an important concept in martingale theory.

In this paper we will give a categorical proof of the Radon-Nikodym theorem. We will do this by
describing the trivial version of the result on finite probability spaces as a natural isomorphism. We then proceed to Kan extend this isomorphism to obtain the result for general probability spaces. Moreover, we observe that conditional expectation naturally appears in the construction of the right Kan extensions.
Using this we can represent martingales, a special type of stochastic processes, categorically.

We then repeat the same construction for the case where everything is enriched over $\mathbf{CMet}$, the category of complete metric spaces and $1$-Lipschitz maps. In the enriched context, we can give a categorical proof
of a martingale convergence theorem, by showing that a certain functor preserves certain cofiltered limits.
\end{abstract}
\textit{Keywords}: Radon-Nikodym derivative, martingale, conditional expectation, Kan extension, enriched category.

\smalltableofcontents
\section{Introduction}\label{RNSection1}

The Radon-Nikodym theorem gives a correspondence between random variables and measures, two important concepts in probability theory and measure theory. If we fix a probability space $(\Omega,\mathcal{F},\mathbb{P})$, then we can look at the \emph{random variables} on this probability space and the measures that are \emph{absolutely continuous} with respect to $\mathbb{P}$. The Radon-Nikodym theorem tells us that there is a canonical correspondence between these two. The random variable associated to a measure $\mu$ that is absolutely continuous with respect to $\mathbb{P}$ is called the \emph{Radon-Nikodym derivative with respect to $\mathbb{P}$}. The classical proof gives a concrete construction of these Radon-Nikodym derivatives and relies on the Hahn decomposition theorem, e.g. Theorem 31.B in \cite{halmos}, Theorem 4.2.2 in \cite{cohn} and Theorem 3.2.2 in \cite{bogachev}. Important applications of this result are the existence of conditional expectation and the Girsanov theorem in stochastic calculus.

In section \ref{RNSection2} of this paper, we will give a \emph{categorical proof} for this result. Moreover, we will not only prove that there is a bijection between random variables and measures, but also that this bijection is an \emph{isometry}. We will do this by starting with the trivial case, when $\Omega$ is finite. We translate this categorically as a natural isomorphism between certain functors. We will then proceed by \emph{Kan extending} the trivial, finite version of the result to the general result. This happens in two parts. The first part (section \ref{RNSection2.2}) is straightforward and purely categorical, not relying on any results in measure theory. The second part of the proof (section \ref{RNSection2.3}) does require some measure theory, in particular it relies on the Riesz-Fischer theorem (Theorem \ref{RN2.1}). Furthermore, the concept of conditional expectation naturally arises from the Kan extension construction in section \ref{RNSection2.3}.

In section \ref{RNSection3}, we will focus on \emph{martingales}, a special class of stochastic processes. Important examples of martingales are Brownian motion and unbiased random walks. Furthermore, martingales have nice convergence properties, which are described by \emph{Doob's martingale convergence theorem}. The proof of this result relies on stopping times, the optional stopping theorem and several lemmas about upcrossings by stochastic processes. The original proof can be found in section XI.14 in \cite{doob}. We will give a categorical proof of a weaker version of this result. We do this by showing that a certain class of functors preserve certain cofiltered limits (Theorem \ref{RN3.16}). By applying this to the functors representing random variables from section \ref{RN3.3}, we immediately obtain a proof for a weaker martingale convergence theorem. Moreover, if we apply the same result to the functor representing measures from section \ref{RN3.3}, we find a Kolmogorov extension-type theorem. In this section we use the results from section \ref{RNSection2}. However, we first lift everything to the \emph{enriched} setting. We consider everything to be enriched over the category of \emph{complete metric spaces}. This part is crucial to obtain the main result in this section (Theorem \ref{RN3.16}). 

In this paper we will consider every distance function to be an \emph{extended pseudometric}, i.e. a function $d:X\times X\to [0,\infty]$ on a set $X$ such that for all $x,y\in X$, $d(x,x)=0$ and $d(x,y)=d(y,x)$, satisfying the triangle-inequality. However,we will refer to an extended pseudometric $d$ as just ‘\emph{metric}' and to the pair $(X,d)$ as just ‘\emph{metric space}'. The category of these metric spaces with the $1$-Lipschitz maps between them is denoted by $\textbf{Met}$ and the full subcategory of \emph{complete} metric spaces by $\textbf{CMet}$.

For a measurable map $f: X\to Y$ and a measure $\mu$ on $X$, we write $\mu\circ f^{-1}$ to mean the \emph{pushforward measure} of $\mu$ along $f$, i.e. for every measurable subset $E$ of $Y$, $$\left(\mu\circ f^{-1}\right)(E):=\mu(f^{-1}(E)).$$

Furthermore, for two real number $a$ and $b$, the minimum of the two is denoted by $a\wedge b$, while the maximum of the two is denoted by $a\vee b$.
    
\section{The Radon-Nikodym theorem}\label{RNSection2}
The Radon-Nikodym theorem is an important result in measure theory and probability theory. The goal of this section is to give a \emph{categorical} proof of this result. We will start this section by recalling the result and explaining its applications in probability theory. We will then discuss the trivial \emph{finite} version of the Radon-Nikodym theorem and explain how this translate categorically. After that, we focus on ‘\emph{Kan extending}' the trivial finite result to the general result. 

Probability theory studies \emph{random variables} and \emph{probability distributions}. The connection between these two is given by the Radon-Nikodym theorem. 

Let $\bfomega:=(\Omega,\mathcal{F},\mathbb{P})$ be a probability space. We say that two measurable functions $f,g:\Omega\to [0,\infty)$ are $\mathbb{P}$-almost surely equal if $\mathbb{P}(f=g)=1$ and we write $f=_\mathbb{P}g$. This defines an equivalence relation on the set $\textbf{Mble}(\Omega,[0,\infty))$ of measurable functions $X\to [0,\infty)$. 
The set $$\textbf{Mble}(\Omega,[0,\infty))/=_{\mathbb{P}}$$ 
becomes a metric space, by endowing it with the $L^1$\textit{-metric} defined by $$d_{L^1}(f,g):=\int \lvert f-g\rvert \text{d}\mathbb{P},$$ for all $f,g \in \textbf{Mble}(\Omega,[0,\infty))/=_{\mathbb{P}}$. We will denote this space of random variables by $\mathrm{RV}(\bfomega)$. For a real number $r>0$, let $\mathrm{RV}_r(\bfomega)$ be the subspace of random variable $f$ such that $\mathbb{P}(f\leq r)=1$.

An important result about the space of random variables that we will need later is the Riesz-Fischer theorem. 

\begin{thm}[Riesz-Fischer]\label{RN2.1}
Let $\bfomega:=(\Omega,\mathcal{F},\mathbb{P})$ be a probability space and let $r>0$ be a natural number. The metric spaces $\RV(\bfomega)$ and $\RV_r(\bfomega)$ are complete.
\end{thm}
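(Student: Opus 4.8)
The plan is to run the classical $L^1$-completeness argument, adapted to the facts that our random variables are non-negative and that $d_{L^1}$ is an extended pseudometric (so distances may be infinite). Let $(f_n)$ be a Cauchy sequence in $\RV(\bfomega)$. Since the sequence is Cauchy, its tail eventually lies within a single finite-distance component, so I may pass to a subsequence $(f_{n_k})$ with $d_{L^1}(f_{n_k},f_{n_{k+1}})\leq 2^{-k}$ for every $k$. As a Cauchy sequence possessing a convergent subsequence converges to the same limit, it suffices to produce a limit for this subsequence.

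To construct the candidate limit I would set $g:=\sum_{k\geq 1}\lvert f_{n_{k+1}}-f_{n_k}\rvert$. By the monotone convergence theorem, $\int g\,\mathrm{d}\mathbb{P}=\sum_k \int\lvert f_{n_{k+1}}-f_{n_k}\rvert\,\mathrm{d}\mathbb{P}\leq\sum_k 2^{-k}=1<\infty$, so $g<\infty$ almost surely. On the full-measure set where $g$ is finite, the telescoping series $f_{n_1}+\sum_k(f_{n_{k+1}}-f_{n_k})$ converges absolutely, so $(f_{n_k})$ converges pointwise almost surely to a measurable function $f$; since each term is non-negative and the partial sums stay finite, $f$ takes values in $[0,\infty)$ almost surely (on the remaining null set I set $f=0$, which does not affect its $=_\mathbb{P}$-class).

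It then remains to verify $L^1$-convergence, and here I would invoke dominated convergence. From $f_{n_k}=f_{n_1}+\sum_{j<k}(f_{n_{j+1}}-f_{n_j})$ we get $f_{n_k}\leq f_{n_1}+g$, hence $\lvert f_{n_k}-f\rvert\leq 2(f_{n_1}+g)$, which is integrable; combined with $\lvert f_{n_k}-f\rvert\to 0$ pointwise almost surely this yields $\int\lvert f_{n_k}-f\rvert\,\mathrm{d}\mathbb{P}\to 0$, i.e. $d_{L^1}(f_{n_k},f)\to 0$. This establishes completeness of $\RV(\bfomega)$. For $\RV_r(\bfomega)$ I would simply note it is a closed subspace: if each $f_n\leq r$ almost surely, then the almost-sure pointwise limit $f$ of the subsequence also satisfies $f\leq r$ almost surely, so $f\in\RV_r(\bfomega)$, and completeness of the subspace follows.

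The only step carrying genuine measure-theoretic content is the interchange of summation/limit with the integral, namely the appeals to monotone and dominated convergence; the rest—extracting the rapidly Cauchy subsequence, building the limit from the absolutely convergent series, promoting subsequential convergence to convergence of the whole sequence, and checking that the bound $f\leq r$ is preserved—is routine soft analysis. Thus the main obstacle is essentially bookkeeping the convergence theorems rather than any conceptual difficulty.
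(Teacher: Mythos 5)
Your argument is the standard Riesz--Fischer proof (rapidly Cauchy subsequence, telescoping series, monotone convergence to get the dominating series, dominated convergence for the $L^1$ limit, closedness for the bounded subspace). The paper itself offers no proof at all here --- it cites Theorem 2.2 of Stein--Shakarchi --- so in substance you have reconstructed exactly the argument the paper outsources to the literature; there is no methodological divergence to compare.

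There is, however, one concrete slip in your dominated convergence step. Elements of $\RV(\bfomega)$ are arbitrary measurable maps $\Omega\to[0,\infty)$ with \emph{no} integrability hypothesis --- the $L^1$ distance is an extended pseudometric precisely because $\int f\,\mathrm{d}\mathbb{P}$ may be infinite --- so $f_{n_1}$ need not be integrable, and your claim that $\lvert f_{n_k}-f\rvert\le 2(f_{n_1}+g)$ ``which is integrable'' can fail: take $\Omega=(0,1)$ with Lebesgue measure and the constant sequence $f_n(x)=1/x$, which is Cauchy with non-integrable terms. The repair is one line, using bounds you already have: $\lvert f_{n_k}-f_{n_1}\rvert\le\sum_{j<k}\lvert f_{n_{j+1}}-f_{n_j}\rvert\le g$, and passing to the almost-sure limit gives $\lvert f-f_{n_1}\rvert\le g$, hence $\lvert f_{n_k}-f\rvert\le 2g$ with $2g$ integrable, and dominated convergence applies. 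With that substitution the proof of completeness of $\RV(\bfomega)$ is sound, and your closed-subspace argument for $\RV_r(\bfomega)$ --- the almost-sure pointwise limit of the subsequence preserves the bound $f\le r$ --- is fine as stated.
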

\begin{proof}
This is Theorem 2.2 in \cite{stein}.
\end{proof}

For a measure on $\mu$ on a measure space $(\Omega,\mathcal{F})$, we say that $\mu$ \textit{is absolutely continuous with respect to }$\mathbb{P}$ if $\mu(A)=0$ for all measurable subsets $A\subseteq \Omega$ such that $\mathbb{P}(A)=0$; this is denoted as $\mu\ll \mathbb{P}$. The set of measures on $(\Omega,\mathcal{F})$ that are absolutely continuous with respect to $\mathbb{P}$ becomes a metric space, by endowing it with the \textit{total variation metric} defined by \begin{align*} d_{\mathrm{TV}}(\mu,\nu) &:=\lvert \mu-\nu\rvert(\Omega) \\ &= \sup\left\{\sum_{n=1}^\infty \lvert \mu(A_n)-\nu(A_n)\rvert \mid (A_n)_{n=1}^\infty \text{ measurable partition of }\Omega\right\},\end{align*} for all $\mu, \nu \ll \mathbb{P}$. We denote this space of measures by $\mathrm{M}(\bfomega)$. For a real number $r>0$, we write $\mathrm{M}_r(\bfomega)$ for the subspace of measures $\mu$ on $(\Omega,\mathcal{F})$ such that $\mu\leq r\mathbb{P}$, i.e. $\mu(A)\leq r\mathbb{P}(A)$ for all $A\in \mathcal{F}$. 

We have the following well-known result, for which we will give a short proof. A proof can also be found in section III.7.4 in \cite{dunford}.

\begin{prop}\label{RN2.2}
    Let $\bfomega:=(\Omega,\mathcal{F},\mathbb{P})$ be a probability space and let $r>0$ be a natural number. The metric spaces $\M(\bfomega)$ and $\M_r(\bfomega)$ are complete.
\end{prop}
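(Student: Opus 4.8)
The plan is to verify completeness directly from the Cauchy criterion, the only genuinely subtle point being that the candidate limit is a bona fide (countably additive) measure. So let $(\mu_n)_n$ be a Cauchy sequence in $\M(\bfomega)$. The key initial observation is that total-variation Cauchyness is really \emph{uniform} Cauchyness of the associated set functions: for every measurable $A$ we have $\lvert \mu_n(A)-\mu_m(A)\rvert \leq \lvert \mu_n-\mu_m\rvert(\Omega)=d_{\mathrm{TV}}(\mu_n,\mu_m)$, so $(\mu_n)_n$ converges uniformly on $\mathcal{F}$ to a set function $\mu\colon\mathcal{F}\to[0,\infty)$ given pointwise by $\mu(A)=\lim_n \mu_n(A)$. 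This $\mu$ is automatically finitely additive with $\mu(\emptyset)=0$.

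Next I would establish countable additivity of $\mu$, which is the heart of the argument. Given a disjoint sequence $(A_k)_k$ with union $A$, finite additivity reduces the claim to showing $\mu(B_N)\to 0$, where $B_N:=\bigcup_{k>N}A_k$ decreases to $\emptyset$. Fixing $\varepsilon>0$ and using uniform convergence, I choose $n$ with $\sup_{E\in\mathcal{F}}\lvert \mu(E)-\mu_n(E)\rvert<\varepsilon/2$; since each $\mu_n$ is a genuine finite measure, continuity from above gives $\mu_n(B_N)<\varepsilon/2$ for $N$ large, and combining the two estimates yields $\mu(B_N)<\varepsilon$. Hence $\mu$ is a measure. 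Absolute continuity is then immediate: if $\mathbb{P}(A)=0$ then $\mu_n(A)=0$ for all $n$, so $\mu(A)=0$, i.e. $\mu\ll\mathbb{P}$ and $\mu\in\M(\bfomega)$.

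It then remains to upgrade the uniform (pointwise) convergence to convergence in the total variation metric, since a priori $d_{\mathrm{TV}}$ dominates the supremum distance. Here I would invoke the Jordan decomposition of the finite signed measure $\mu_n-\mu$: writing $\Omega=P\sqcup N$ for a Hahn decomposition, one has $d_{\mathrm{TV}}(\mu_n,\mu)=\lvert \mu_n-\mu\rvert(\Omega)=(\mu_n-\mu)(P)-(\mu_n-\mu)(N)\leq 2\sup_{E\in\mathcal{F}}\lvert \mu_n(E)-\mu(E)\rvert$, which tends to $0$ by uniform convergence. Thus $\mu_n\to\mu$ in $\M(\bfomega)$, proving completeness.

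Finally, for $\M_r(\bfomega)$ I would simply argue that it is a closed subspace of the now-complete space $\M(\bfomega)$: each constraint $\mu(A)\leq r\mathbb{P}(A)$ is preserved under the pointwise limit $\mu(A)=\lim_n\mu_n(A)$, so any total-variation limit of measures in $\M_r(\bfomega)$ again satisfies $\mu\leq r\mathbb{P}$ and so lies in $\M_r(\bfomega)$; a closed subspace of a complete space is complete. I expect the countable-additivity step to be the main obstacle, with everything else routine — and the single observation that makes it go through is that total-variation Cauchyness is exactly uniform Cauchyness of the set functions, which is what transfers continuity from above from each $\mu_n$ to the limit.
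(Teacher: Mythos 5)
Your proof is correct, and its skeleton matches the paper's: pointwise limit $\mu(A)=\lim_n\mu_n(A)$ from the bound $\lvert\mu_n(A)-\mu_m(A)\rvert\leq d_{\mathrm{TV}}(\mu_n,\mu_m)$, finite additivity for free, countable additivity by transferring continuity at $\emptyset$ from a fixed $\mu_n$ to $\mu$ via uniform closeness (the paper phrases this with $\mu_n(A_m)\leq \mu_N(A_m)+d_{\mathrm{TV}}(\mu_n,\mu_N)$ and lets $n\to\infty$; yours with $\sup_E\lvert\mu(E)-\mu_n(E)\rvert<\varepsilon/2$ — the same estimate). Where you genuinely diverge is the final upgrade to total-variation convergence. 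The paper stays inside its own definition of $d_{\mathrm{TV}}$ as a supremum over countable partitions: it truncates to finite sums $\sum_{k=1}^K\lvert\mu_p(A_k)-\mu_q(A_k)\rvert\leq d_{\mathrm{TV}}(\mu_p,\mu_q)$, lets $p\to\infty$ and then $K\to\infty$, and takes the supremum over partitions — in effect proving lower semicontinuity of $d_{\mathrm{TV}}$ under setwise convergence, with constant $1$ and no decomposition theory at all. You instead invoke the Hahn--Jordan decomposition of the signed measure $\mu_n-\mu$ to get $d_{\mathrm{TV}}(\mu_n,\mu)\leq 2\sup_E\lvert\mu_n(E)-\mu(E)\rvert$; this is valid (once $\mu$ is known to be a finite measure there is no circularity), and it is shorter, but it imports exactly the Hahn decomposition theorem that the paper's introduction singles out as the machinery behind the classical Radon--Nikodym proof and that the categorical program is designed to avoid, so the paper's partition argument is the more self-contained and stylistically consistent choice; it also yields the sharper constant, though either suffices for completeness. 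On the other hand, your writeup is more complete on two points the paper leaves implicit: you verify $\mu\ll\mathbb{P}$ explicitly, and you handle $\M_r(\bfomega)$ by noting it is closed in $\M(\bfomega)$ since the constraint $\mu(A)\leq r\mathbb{P}(A)$ survives pointwise limits — the paper's proof only treats $\M(\bfomega)$.
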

\begin{proof}
    Let $(\mu_n)_n$ be a Cauchy sequence in $\M(\bfomega)$. Since $\lvert \mu_p(A)-\mu_q(A)\rvert \leq d_{\mathrm{TV}}(\mu_p,\mu_q)$ for all $A\in \mathcal{F}$, it follows that $(\mu_n(A))_n$ is a Cauchy sequence in $[0,\infty)$. Define a map $\mu:\mathcal{F}\to [0,\infty)$ by $$A\mapsto \lim_{n\to \infty}\mu_n(A).$$

    It is clear that $\mu$ is finitely additive. 
    Let $(A_n)_{n=1}^\infty$ be a decreasing sequence of measurable subsets such that $A_n\downarrow \emptyset$.

    For $\epsilon>0$, there is an $N$ such that for $d_{\mathrm{TV}}(\mu_{p},\mu_{q})<\frac{\epsilon}{2}$ for all $p, q\geq N$. Since $\mu_N$ is $\sigma$-additive, there is an $M$ such that for $m\geq M$, $\mu_N(A_m)<\frac{\epsilon}{2}$. Therefore, for $n\geq N$ and $m\geq M$, we have $$\mu_n(A_m)\leq \mu_N(A_m)+d_{\mathrm{TV}}(\mu_n,\mu_N) \leq \epsilon. $$

    Taking $n\to\infty$, shows that $\mu(A_m)<\epsilon$ and therefore $\lim_{m\to \infty}\mu(A_m)=0$. We conclude that $\mu$ is $\sigma$-additive. 

    We finish the proof by showing that $(\mu_n)_n$ converges to $\mu$ in $\M(\bfomega)$. Let $K\geq 1$ be a natural number. 

    Then for every countable partition $(A_k)_{k=1}^\infty$ and $p,q\geq N$, $\sum_{k=1}^K\lvert \mu_{p}(A_k)-\mu_{q}(A_k)\rvert \leq d_{\mathrm{TV}}(\mu_p,\mu_q)<\frac{\epsilon}{2}.$

    Taking first $p\to \infty$ and then $K\to \infty$ gives that $$\sum_{k=1}^\infty \lvert \mu(A_k)-\mu_q(A_k)\rvert <\frac{\epsilon}{2},$$ for every countable partition $(A_k)_{k=1}^\infty$ and $q\ge N$. 

    By taking the supremum over all countable partitions, we find that $$d_{\mathrm{TV}}(\mu,\mu_q)<\frac{\epsilon}{2}$$ for all $q\geq N$. This shows that $\mu_n\to \mu$ in $\M(\bfomega)$. 
\end{proof}

The space of random variables and the space of measures on $\bfomega$ are connected in the following way. For a random variable $f\in \RV(\bfomega)$, define a measure $\varphi(f)$ on $(\Omega,\mathcal{F})$ by the assignment $$A\mapsto \int_Af\text{d}\mathbb{P}.$$ If $\mathbb{P}(A)=0$, then $\varphi(f)(A)=0$ and therefore $\varphi(f) \in \M(\bfomega)$. Moreover for $f,g\in \RV(\bfomega)$ and a measurable partition $(A_n)_{n=1}^\infty$ of $\Omega$, 

$$\sum_{n=1}^\infty \left \lvert \int_{A_n}f\text{d}\mathbb{P}-\int_{A_n}g\text{d}\mathbb{P}\right\rvert \leq \sum_{n=1}^\infty \int_{A_n}\lvert f-g\rvert \text{d}\mathbb{P} = \int\lvert f-g\rvert \text{d}\mathbb{P}.$$ Taking the supremum over all such partitions gives that $d_{\mathrm{TV}}(\varphi(f),\varphi(g))\leq d_{L^1}(f,g)$. Therefore the map $\varphi:\RV(\bfomega)\to \M(\bfomega)$ is $1$-Lipschitz.

\begin{thm}[Radon-Nikodym]\label{RN2.3}
The map $\varphi$ is an isomorphism. In particular, for every measure $\mu$ such that $\mu \ll \mathbb{P}$ there exists a $\mathbb{P}$-almost surely unique measurable map $f:\Omega\to [0,\infty)$ such that  for all $A\in \mathcal{F}$,  
$$\mu(A)=\int_Af\text{d}\mathbb{P}.$$
\end{thm}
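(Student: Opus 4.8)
The statement is that $\varphi$ is an isomorphism in $\mathbf{Met}$, i.e.\ a bijection whose inverse is again $1$-Lipschitz; equivalently, that $\varphi$ is a surjective isometry. Since $\varphi$ is already known to be $1$-Lipschitz, I would first promote this to an isometry by establishing the reverse bound $d_{L^1}(f,g)\le d_{\mathrm{TV}}(\varphi(f),\varphi(g))$. For this it is enough to test the supremum defining $d_{\mathrm{TV}}$ against the single two-block partition $A_1:=\{f\ge g\}$, $A_2:=\{f<g\}$: on $A_1$ the integrand $f-g$ has constant sign and on $A_2$ so does $g-f$, whence $$\lvert\varphi(f)(A_1)-\varphi(g)(A_1)\rvert+\lvert\varphi(f)(A_2)-\varphi(g)(A_2)\rvert=\int_{A_1}(f-g)\,\mathrm{d}\mathbb{P}+\int_{A_2}(g-f)\,\mathrm{d}\mathbb{P}=\int\lvert f-g\rvert\,\mathrm{d}\mathbb{P}.$$ Thus $d_{\mathrm{TV}}(\varphi(f),\varphi(g))=d_{L^1}(f,g)$, so $\varphi$ is an isometry; in particular it is injective---which already delivers the $\mathbb{P}$-almost sure uniqueness asserted in the theorem---and its inverse is automatically $1$-Lipschitz. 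The entire content of the theorem then collapses to the surjectivity of $\varphi$.

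For surjectivity I would lean on the completeness results. As $\varphi$ is an isometry and $\RV(\bfomega)$ is complete by the Riesz--Fischer theorem (Theorem \ref{RN2.1}), the image $\varphi(\RV(\bfomega))$ is complete as well, hence a \emph{closed} subspace of $\M(\bfomega)$, which is itself complete (Proposition \ref{RN2.2}). It therefore suffices to prove that $\varphi(\RV(\bfomega))$ is \emph{dense}: a closed dense subspace is the whole space. This reframing reduces the existence half of Radon--Nikodym to a pure approximation problem and avoids the Hahn decomposition used in the classical argument.

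To produce the approximants I would discretise $\mu$ over finite partitions. Given a finite measurable partition $\mathcal{P}=\{B_1,\dots,B_n\}$ of $\Omega$, set $f_{\mathcal{P}}:=\sum_i \frac{\mu(B_i)}{\mathbb{P}(B_i)}\mathbf{1}_{B_i}$, where the sum runs over the blocks of positive probability; then $\varphi(f_{\mathcal{P}})$ coincides with $\mu$ on the $\sigma$-algebra generated by $\mathcal{P}$. The density claim is that $\varphi(f_{\mathcal{P}})\to\mu$ in total variation as $\mathcal{P}$ is refined, and I expect this to be the crux: the net $(f_{\mathcal{P}})$ is precisely a martingale of conditional expectations, and its convergence is a genuine analytic input rather than a formal consequence. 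To keep it manageable I would first settle the bounded case, showing that $\varphi\colon\RV_r(\bfomega)\to\M_r(\bfomega)$ is surjective for each natural number $r$: there one has $f_{\mathcal{P}}\le r$, so the approximating martingale is uniformly bounded and uniformly integrable and its limit is controlled (alternatively, in this bounded regime one can bypass the martingale and obtain the density directly from a von Neumann-type $L^2$-projection argument). The general case would then follow by exhausting an arbitrary $\mu\in\M(\bfomega)$ by its truncations $\mu\wedge r\mathbb{P}\in\M_r(\bfomega)$ and passing to the monotone limit $f_r\uparrow f$ of the corresponding densities, with completeness securing the limit.
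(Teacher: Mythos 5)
Your proposal is correct in outline, and once unwound its analytic skeleton coincides with the paper's, but the packaging is genuinely different. The paper never argues directly with $\varphi$: it proves the finite case as a natural isomorphism (Proposition \ref{RN2.5}), right-Kan-extends along $i:\textbf{Prob}_f\to\textbf{Prob}$ to obtain the bounded theorem (Theorems \ref{RN2.6}, \ref{RN2.10}, \ref{RN2.16}), and then takes a colimit over $r$ to get the general statement (Theorem \ref{RN2.18}). Your isometry-plus-closed-image reduction replaces that formalism, but the crux you flag --- convergence of the refining-partition approximants --- is exactly the analytic core of Theorem \ref{RN2.10}: the net $s^y_f$ there, indexed by measure-preserving maps to finite probability spaces, \emph{is} your martingale $f_{\mathcal{P}}$ of discretized densities, and its Cauchyness is established by precisely your ``von Neumann-type $L^2$'' alternative, namely the monotonicity $\mathbb{E}[(s^y_g)^2]\leq\mathbb{E}[(s^y_f)^2]$ and the Pythagorean identity $\mathbb{E}[(s^y_f-s^y_g)^2]=\mathbb{E}[(s^y_f)^2]-\mathbb{E}[(s^y_g)^2]$ (Lemma \ref{appendix}), followed by the same appeal to Riesz--Fischer (Theorem \ref{RN2.1}) you make; your truncation step $\mu\wedge r\mathbb{P}$ and $f\wedge r$ is likewise Propositions \ref{RN2.8} and \ref{RN2.13}. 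What your route buys is brevity and an explicit surjective-isometry statement (the two-block partition computation is correct and gives the reverse Lipschitz bound cleanly; the paper obtains the isometric inverse only implicitly through the finite isomorphisms and universal properties). What the paper's route buys is functoriality: the morphism action of $\RV_r$ is conditional expectation (Proposition \ref{RN2.11}), which is what powers the enriched martingale results of Section \ref{RNSection3} and is invisible in your argument. One caution on the sketched crux: uniform boundedness and uniform integrability alone do not yield convergence of a \emph{net} over partitions, so you should commit to the $L^2$ orthogonality computation --- noting that $f_{\mathcal{P}}=\mathbb{E}[f_{\mathcal{P}'}\mid \sigma(\mathcal{P})]$ whenever $\mathcal{P}'$ refines $\mathcal{P}$, since both sides integrate to $\mu$ on each block of $\mathcal{P}$ --- after which the bounded case closes exactly as in the paper's appendix.
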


The $f$ in Theorem \ref{RN2.3} is called the \textbf{Radon-Nikodym derivative of $\mu$ with respect to $\mathbb{P}$} and is denoted as $\frac{\text{d}\mu}{\text{d}\mathbb{P}}$.

The following example is an application of the Radon-Nikodym theorem in probability theory. The existence of \emph{conditional expectation} can be proven using this result. The concept of conditional expectation is important in \emph{martingale} theory, which we will discuss further in Section \ref{RNSection3}. 
\begin{eg}[Conditional expectation]\label{RN2.4}
Consider two probability spaces $\bfomega_1:=(\Omega_1,\mathcal{F}_1,\mathbb{P}_1)$ and $\bfomega_2:=(\Omega_2,\mathcal{F}_2,\mathbb{P}_2)$ and let $g:\bfomega_1\to \bfomega_2$ be a measure preserving map, i.e. $\mathbb{P}_1\circ g^{-1}=\mathbb{P}_2$.

For a random variable $f\in \RV(\bfomega_1)$, we can define a measure $\mu$ on $\bfomega_2$ as $$\mu(B)=\int_{g^{-1}(B)}f\text{d}\mathbb{P}_1$$for all $B\in \mathcal{F}_2$. If $\mathbb{P}_2(B)=0$, then $\mathbb{P}_1(g^{-1}(B))=0$ and therefore $\mu\in \M(\bfomega_2)$. 

By the Radon-Nikodym theorem (Theorem \ref{RN2.3}), there exists a unique $\tilde{f}\in \RV(\bfomega_2)$ such that \begin{equation}\label{eq1}
    \int_{g^{-1}(B)}f\text{d}\mathbb{P}_1=\mu(B)=\int_B\tilde{f}\text{d}\mathbb{P}_2.\end{equation}
The random variable $\Tilde{f}$ is called the \textbf{conditional expectation of $f$ with respect to $g$} and is denoted as $\mathbb{E}[f\mid g]$. Because of the uniqueness in the Radon-Nikodym theorem, equation (\ref{eq1}) is the defining property for the random variable $\mathbb{E}[f\mid g]$.
\end{eg}
\subsection{The finite Radon-Nikodym theorem}\label{RNSection2.1}
In the case that we are working with a \emph{finite} probability space, the Radon-Nikodym becomes trivial. We will discuss this trivial version in this section and explain how this can be expressed categorically. To do this we will define two functors, one expressing random variables and one expressing measures. The finite version of the Radon-Nikodym theorem then corresponds to saying that these functors are isomorphic. 

A \textbf{finite probability space} is a probability space whose underlying set $A$ is finite and whose $\sigma$-algebra is the whole powerset $\mathcal{P}(A)$. We will write $(A,p)$ instead of $(A,\mathcal{P}(A),p)$. For an element $a\in A$, we write $p_a$ or $p(a)$ to mean $p(\{a\})$.

We denote the category of probability spaces and measure-preserving maps by $\textbf{Prob}$ and the full subcategory of finite probability measure by $\textbf{Prob}_f$. The inclusion functor $\textbf{Prob}_f\to \textbf{Prob}$ is denoted by $i$.

We start by defining a functor of measures $\M^f:\textbf{Prob}_f\to \textbf{Met}$. This functor sends a finite probability space $(A,p)$ to $\M(A,p)$ and a measure preserving map $s:(A,p)\to (B,q)$ of finite probability spaces to the $1$-Lipschitz map $\M^f(s):\M(A,p)\to \M(B,q)$, which is defined by the assignment
$$m\mapsto m\circ s^{-1}.$$ Similarly we can define a functor $\M^f_r:\textbf{Prob}_f\to \textbf{Met}$ for every positive real number $r$.

We can also define a functor of random variables $\RV^f:\textbf{Prob}_f\to \textbf{Met}$ in the following way. On objects this functor is defined by sending a finite probability space $(A,p)$ to the metric space $\RV(A,p)$. On morphisms this functor sends a measure preserving map $s:(A,p)\to (B,q)$ of finite probability spaces to the $1$-Lipschitz map $\RV^f(s):\RV(A,p)\to \RV(B,q)$ which is defined for $g\in \RV(A,p)$ by $$\RV^f(s)(g):(B,q)\to [0,\infty): b\mapsto \begin{cases}
    \frac{1}{q_b}\sum_{s(a)=b}p_qg(a) \text{ if }q_b\not=0\\ 0\text{ otherwise.}
\end{cases}$$
The map $\RV^f(s)(g)$ does not depend on the representation of $g$ and therefore it is well-defined. In a similar way we define functors $\RV^f_r:\textbf{Prob}_f\to \textbf{Met}$ for real numbers $r>0$.

For a finite probability space $(A,p)$, we define the $1$-Lipschitz map $(\rho^f_r)_A:\RV_r^f(A,p)\to \M^f_r(A,p)$ by the assignment $$g\mapsto (g(a)p_a)_{a\in A}.$$

The finite version of the Radon-Nikodym theorem can now be expressed in the following way.
\begin{prop}[Finite bounded Radon-Nikodym]\label{RN2.5}
The maps $((\rho^f_r)_A)_{(A,p)}$ form a natural isomorphism $\rho^f_r:\RV_r^f\to \M_r^f$.
\end{prop}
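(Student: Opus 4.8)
The plan is to verify the two ingredients that ``natural isomorphism'' packages together: that the family $((\rho^f_r)_A)_{(A,p)}$ is natural in $(A,p)$, and that each component $(\rho^f_r)_A$ is invertible in $\textbf{Met}$. Since a morphism in $\textbf{Met}$ is invertible precisely when it is a bijective isometry, I would prove the component-wise statement by exhibiting an explicit inverse (to get surjectivity) and by checking that $(\rho^f_r)_A$ preserves distances exactly (which gives injectivity and, together with the inverse, upgrades the bijection to a genuine $\textbf{Met}$-isomorphism).

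First I would construct the candidate inverse $\sigma_A:\M^f_r(A,p)\to \RV^f_r(A,p)$ sending a measure $\mu\le r p$ to the random variable $a\mapsto \mu(\{a\})/p_a$ where $p_a\neq 0$ and to $0$ elsewhere; the hypothesis $\mu(\{a\})\le r p_a$ forces the quotient into $[0,r]$, so $\sigma_A(\mu)\in \RV_r(A,p)$. A singleton-by-singleton computation then shows $(\rho^f_r)_A\circ \sigma_A=\mathrm{id}$, using that $\mu(\{a\})=0$ automatically whenever $p_a=0$, and $\sigma_A\circ (\rho^f_r)_A=\mathrm{id}$. The latter holds only up to $=_\mathbb{P}$: the two representatives can disagree exactly on the atoms of probability zero, which is precisely why passing to $\mathbb{P}$-almost sure equivalence classes in the definition of $\RV$ is essential for $\sigma_A$ to be a genuine two-sided inverse.

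Next I would establish that $(\rho^f_r)_A$ is an isometry. On a finite space the total variation distance collapses to a sum over atoms, $d_{\mathrm{TV}}(\mu,\nu)=\sum_{a\in A}\lvert\mu(\{a\})-\nu(\{a\})\rvert$, because the partition into singletons is the finest one and, by the triangle inequality, realizes the supremum over all partitions. Hence for $g,h\in \RV_r(A,p)$ one gets $d_{\mathrm{TV}}((\rho^f_r)_A(g),(\rho^f_r)_A(h))=\sum_a\lvert g(a)-h(a)\rvert p_a=\int\lvert g-h\rvert\,\mathrm{d}p=d_{L^1}(g,h)$, so the map is distance-preserving; combined with the explicit inverse, this confirms it is an isomorphism in $\textbf{Met}$.

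Finally I would check naturality: for a measure-preserving $s:(A,p)\to(B,q)$ and $g\in \RV_r(A,p)$, both composites of the square should yield the measure on $(B,q)$ assigning $\sum_{s(a)=b}g(a)p_a$ to $\{b\}$. The path $\M^f_r(s)\circ(\rho^f_r)_A$ gives this directly from the definition of pushforward, while $(\rho^f_r)_B\circ \RV^f_r(s)$ gives $q_b\cdot\frac{1}{q_b}\sum_{s(a)=b}p_a g(a)$ when $q_b\neq 0$. The only case requiring genuine argument, and the one I expect to be the main obstacle, is $q_b=0$: here I would invoke the measure-preservation identity $q_b=p(s^{-1}(\{b\}))=\sum_{s(a)=b}p_a$, which forces $p_a=0$ for every $a$ with $s(a)=b$, so that $\sum_{s(a)=b}g(a)p_a=0$ and both composites vanish on $\{b\}$. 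Everything else reduces to routine atom-by-atom comparison, so the careful treatment of the null atoms under pushforward is where the real content of the proof sits.
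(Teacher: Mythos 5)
Your proof is correct and takes essentially the same route as the paper, which merely asserts that $(\rho^f_r)_A$ is well-defined and invertible, cites Proposition 4.2 in \cite{levin} for the atom-wise formula $d_{\mathrm{TV}}(\mu,\nu)=\sum_{a\in A}\lvert\mu(\{a\})-\nu(\{a\})\rvert$ that you instead prove directly, and declares naturality straightforward. Your explicit inverse $\sigma_A$, the observation that $\sigma_A\circ(\rho^f_r)_A=\mathrm{id}$ only holds after passing to $=_{\mathbb{P}}$-classes, and the careful treatment of the null atoms ($p_a=0$ and $q_b=0$, the latter via $q_b=\sum_{s(a)=b}p_a$) are exactly the details the paper leaves implicit.
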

\begin{proof}
It is easy to see that $(\rho^f_r)_A$ is well-defined and invertible for every finite probability space $(A,p)$. If follows now by Proposition $4.2$ in \cite{levin} that this is an isomorphism of metric spaces. It is straightforward to check the naturality of $\rho^f_r$.
\end{proof}

Since $\RV_r^f$ and $\M_r^f$ are isomorphic by Proposition \ref{RN2.5}, so are their right Kan extensions along the inclusion $i:\textbf{Prob}_f\to \textbf{Prob}$. 

\[\begin{tikzcd}
	{\textbf{Prob}_f} && {\textbf{Met}} \\
	\\
	{\textbf{Prob}}
	\arrow[""{name=0, anchor=center, inner sep=0}, "{\mathrm{M}_r^f}"{pos=0.4}, curve={height=-6pt}, from=1-1, to=1-3]
	\arrow[""{name=1, anchor=center, inner sep=0}, "{\mathrm{RV}_r^f}"'{pos=0.4}, shift right=1, curve={height=6pt}, from=1-1, to=1-3]
	\arrow["i"', from=1-1, to=3-1]
	\arrow[""{name=2, anchor=center, inner sep=0}, curve={height=-6pt}, dashed, from=3-1, to=1-3]
	\arrow[""{name=3, anchor=center, inner sep=0}, shift right=1, curve={height=6pt}, dashed, from=3-1, to=1-3]
	\arrow["\cong"{description}, draw=none, from=0, to=1]
	\arrow["\cong"{description}, draw=none, from=2, to=3]
\end{tikzcd}\]

In the following two sections, we will study the right Kan extensions of these functors. In section \ref{RNSection2.2} we will describe what the right Kan extension of the finite measures functor $\M_r^f:\textbf{Prob}_f\to \textbf{Met}$ along $i:\textbf{Prob}_f\to \textbf{Prob}$ looks like and in section \ref{RNSection2.3} we will do the same for the finite random variables functor $\RV_r^f$. 

This will lead to a categorical proof for the bounded Radon-Nikodym theorem in section \ref{RNSection2.4}.

\subsection{\texorpdfstring{The measures functor $\M$}{The measures functor M}}\label{RNSection2.2}
%Let $n\geq 1$ be a natural number. 

%For a finite probability space $(A,p)$ denote $F^f_n(A,p):=F_n(A,\mathcal{P}(A),p)$. For a measure preserving map of finite probability spaces $s:(A,p)\to (B,q)$, let $F^f_n(s)$ be the $1$-Lipschitz map $F^f_n(A)\to F^f_n(B,q)$ that sends a finite measure $r$ to $r\circ s^{-1}$, i.e. the pushforward of $r$ along $s$.

%We obtain a functor $F^f_n:\textbf{Prob}_f\to \textbf{Met}$, where $\textbf{Met}$ is the category of metric spaces and $1$-Lipschitz functions..
In this section we will study the right Kan extension of the functor $\M^f_r:\textbf{Prob}_f\to \textbf{Met}$ along the inclusion functor $i:\textbf{Prob}_f\to \textbf{Prob}$. 

\[\begin{tikzcd}
	{\textbf{Prob}_f} && {\textbf{Met}} \\
	\\
	{\textbf{Prob}}
	\arrow["i"', from=1-1, to=3-1]
	\arrow["{\mathrm{M}_r^f}", from=1-1, to=1-3]
	\arrow["{\mathrm{Ran}_i\mathrm{M}_r^f}"', dashed, from=3-1, to=1-3]
\end{tikzcd}\]

We will first describe how $\mathrm{Ran}_i\M_r^f$ acts on objects in Theorem \ref{RN2.6} and then how it acts on morphisms in Proposition \ref{RN2.7}. It will turn out that this right Kan extension expresses certain measures on arbitrary probability spaces. This will then motivate the notation $\M_r:=\mathrm{Ran}_i\M_r^f$.

We will then show that these functors form a diagram $D_\M$:
\[\begin{tikzcd}
	\ldots & {\mathrm{M}_1} & \ldots & {\mathrm{M}_2} & \ldots & {\mathrm{M}_r} & \ldots
	\arrow[from=1-1, to=1-2]
	\arrow[from=1-2, to=1-3]
	\arrow[from=1-3, to=1-4]
	\arrow[from=1-4, to=1-5]
 \arrow[from=1-5, to=1-6]
  \arrow[from=1-6, to=1-7]
\end{tikzcd}\]
In the second part of this section we will describe the colimit of this diagram. The obtained colimiting functor will express measures and therefore this will motivate the notation $\M:\textbf{Prob}\to \mathbf{CMet}$ for the colimit of $D_\M$.

\begin{thm}\label{RN2.6}
Let $\mathbf{\Omega}:=(\Omega,\mathcal{F},\mathbb{P})$ be a probability space. Then
\[\textup{Ran}_i\M^f_r(\bfomega)=\M_r(\bfomega).\]
\end{thm}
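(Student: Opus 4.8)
The plan is to invoke the pointwise formula for the right Kan extension. Since $\textbf{Met}$ is complete, we have
$$\textup{Ran}_i\M^f_r(\bfomega)=\lim_{(\bfomega\downarrow i)}\M^f_r\circ\pi,$$
where $(\bfomega\downarrow i)$ is the comma category whose objects are pairs $\big((A,p),\,s\big)$ consisting of a finite probability space $(A,p)$ together with a measure-preserving map $s:\bfomega\to i(A,p)$, and $\pi$ is the projection onto $(A,p)$. First I would unwind this comma category: a measure-preserving map $s:\bfomega\to(A,p)$ is exactly a finite measurable partition $(s^{-1}(a))_{a\in A}$ of $\Omega$ with $p_a=\mathbb{P}(s^{-1}(a))$, and a morphism $h:((A,p),s)\to((B,q),t)$ is a coarsening $h$ with $h\circ s=t$. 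Thus the index category is (an essentially small skeleton of) the codirected category of finite measurable partitions of $\Omega$ under refinement.

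Next I would write the limit out concretely. Limits in $\textbf{Met}$ are computed as compatible families equipped with the supremum metric, so an element of the limit is a family $(m_s)_s$ with $m_s\in\M_r(A,p)$ satisfying $m_s\circ h^{-1}=m_t$ for every coarsening $h:s\to t$, and the distance between two such families is $\sup_s d_{\mathrm{TV}}(m_s,m_s')$. I would then define the comparison map $\Phi:\M_r(\bfomega)\to\lim$ by $\mu\mapsto(\mu\circ s^{-1})_s$; the bound $\mu\leq r\mathbb{P}$ gives $\mu\circ s^{-1}\leq r(\mathbb{P}\circ s^{-1})=rp$, so each component lies in $\M_r(A,p)$, and $(\mu\circ s^{-1})\circ h^{-1}=\mu\circ(h\circ s)^{-1}$ gives compatibility, so $\Phi$ is the canonical cone comparison. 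It remains to check that $\Phi$ is an isomorphism in $\textbf{Met}$.

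For the inverse I would reconstruct a measure from a compatible family. Given $(m_s)_s$, each measurable set $E$ determines the two-block partition $s_E:\bfomega\to(\{0,1\},p)$ with $p_1=\mathbb{P}(E)$, and I set $\mu(E):=m_{s_E}(\{1\})$. Compatibility with coarsenings makes $\mu$ finitely additive, and the constraint $m_{s_E}\leq rp$ yields $\mu(E)\leq r\mathbb{P}(E)$. The key point is that a finitely additive set function dominated by the countably additive measure $r\mathbb{P}$ is automatically countably additive, since $A_n\downarrow\emptyset$ forces $\mu(A_n)\leq r\mathbb{P}(A_n)\to 0$; hence $\mu\in\M_r(\bfomega)$. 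A short compatibility argument (collapsing a partition $s$ onto the two-block partition associated to each of its cells) shows $\mu\circ s^{-1}=m_s$ for all $s$, so this reconstruction is a two-sided inverse of $\Phi$ on underlying sets.

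Finally I would verify that $\Phi$ is an isometry, which upgrades the set-bijection to an isomorphism in $\textbf{Met}$. The inequality $\sup_s d_{\mathrm{TV}}(\mu\circ s^{-1},\nu\circ s^{-1})\leq d_{\mathrm{TV}}(\mu,\nu)$ is immediate because pushforward along $s$ is $1$-Lipschitz. For the reverse inequality I would use that each finite measurable partition realises the sum $\sum_a|\mu(s^{-1}a)-\nu(s^{-1}a)|=d_{\mathrm{TV}}(\mu\circ s^{-1},\nu\circ s^{-1})$, together with the standard fact that the total variation distance, defined as a supremum over countable partitions, is already attained as a supremum over finite partitions (truncate a countable partition and lump its tail into a single block). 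The main obstacle is the reassembly step of the previous paragraph: it looks as though it should require a Hahn--Kolmogorov-type extension theorem, but the domination $m_s\leq rp$ both forces absolute continuity and supplies exactly the continuity at $\emptyset$ needed for countable additivity, so no extension theorem is in fact required.
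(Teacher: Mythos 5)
Your proposal is correct and follows essentially the same route as the paper: both compute the right Kan extension pointwise as the limit over the comma category $\bfomega\downarrow i$, reconstruct a measure from the cone data via the two-point probability spaces associated to measurable sets $E$ (with finite additivity obtained by collapsing finer partitions), and deduce $\sigma$-additivity from the domination $\mu\leq r\mathbb{P}$, which is the paper's key observation as well. The only difference is presentational: you verify the universal property by exhibiting an explicit isometric bijection with the concretely computed limit (checking the isometry via attainment of $d_{\mathrm{TV}}$ on finite partitions, a point the paper leaves implicit in its ``it can be checked'' steps), whereas the paper argues directly against an arbitrary cone $(Y,(q_f)_f)$.
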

\begin{proof}
Let $U:\bfomega \downarrow i \to \textbf{Prob}_f$ be the forgetful functor and let $D_\bfomega$ denote the diagram \[\bfomega \downarrow i \xrightarrow{U} \textbf{Prob}_f\xrightarrow{\M^f_r}\mathbf{CMet}.\]
We will now show that $\M_r(\bfomega)=\lim D_\bfomega$. 

For a measure preserving map $f:\bfomega\to \mathbf{A}$, where $\mathbf{A}:=(A,p)$ is some finite probability space, define a map $p_f:\M_r(\bfomega)\to \M_r^f(\mathbf{A})$ by \[p_f(\mu):=\mu\circ f^{-1}.\]
It can be checked that this map is well-defined and $1$-Lipschitz. It is also straightforward to check that the metric space $\M_r(\bfomega)$ together with the maps $(p_f)_f$ form a cone over the diagram $D_\bfomega$.

We will now show that this cone is universal. To do that, consider another cone $(Y,(q_f)_f))$ over the diagram $D_\bfomega$.

Let $E$ be a measurable subset of $\bfomega$. Let $\textbf{2}:=\{0,1\}$ and let $p_E$ be the probability measure on $\textbf{2}$ defined by \[p_E(1):=\mathbb{P}(E).\]
The assignment \[\omega\mapsto \begin{cases}1 \text{ if }\omega\in E\\
0 \text{ otherwise,}\end{cases}\]
defines a measure preserving map $1_E:\bfomega\to (\textbf{2},p_E).$

For $y\in Y$, define a map $\mu_y:\Sigma \to [0,\infty)$ by \[\mu_y(A):=q_{1_A}(y)_1.\] since $q_{1_E}(y)$ is an element of $\M_r^f(\textbf{2},p_E)$, \[\mu_y(E)=q_{1_E}(y)(1)\leq rp_E(1)=r\mathbb{P}(E)\] and therefore $\mu_y\leq r\mathbb{P}$.

We will now show that $\mu_y$ is a measure. Consider disjoint measurable subsets $E_1$ and $E_2$ of $\Omega$. Let $\textbf{3}:=\{0,1,2\}$ and let $p_{E_1,E_2}$ be the probability measure on $\textbf{3}$, defined by \[p_{E_1,E_2}(1):=\mathbb{P}(E_1) \quad \text{ and }\quad p_{E_1,E_2}(2):=\mathbb{P}(E_2).\]

The assignment \[\omega\mapsto \begin{cases}1 \text{ if }\omega\in E_1\\
2 \text{ if }\omega\in E_2\\
0 \text{ otherwise,}\end{cases}\]
defines a measure preserving map $1_{E_1,E_2}:\bfomega\to (\textbf{3},p_{E_1,E_2}).$ 
Let $s:\textbf{3}\to \textbf{2}$ be the map that fixes $0$. The map $\textbf{3}\to \textbf{2}$ that sends $1$ to $1$ and the other elements to $0$ is denoted by $s_1$. The map $s_2:\textbf{3}\to \textbf{2}$ is defined in a similar way.

We have the following commutative triangles: 
\[\begin{tikzcd}
	& {(X,\Sigma,\mathbb{P})} &&& {(X,\Sigma,\mathbb{P})} \\
	&&& {} \\
	{(\textbf{2},p_{E_2})} & {(\textbf{3},p_{E_1,E_2})} & {(\textbf{2},p_{E_1})} & {(\textbf2,p_{E_1\cup E_2})} && {(\textbf{3},p_{E_1,E_2})}
	\arrow["{1_{E_1,E_2}}"{description}, from=1-2, to=3-2]
	\arrow["{1_{E_1}}"{description}, from=1-2, to=3-3]
	\arrow["{s_1}"', from=3-2, to=3-3]
	\arrow["{1_{E_2}}"{description}, from=1-2, to=3-1]
	\arrow["{s_2}", from=3-2, to=3-1]
	\arrow["{1_{E_1,E_2}}"{description}, from=1-5, to=3-6]
	\arrow["{1_{E_1\cup E_2}}"{description}, from=1-5, to=3-4]
	\arrow["s"{description}, from=3-6, to=3-4]
\end{tikzcd}\]
Because $(Y,(q_f)_f)$ is a cone over the diagram $D_\bfomega$, we also have the following commutative triangles:
\[\begin{tikzcd}
	& {Y} &&& {Y} \\
	&&& {} \\
	{\M_r^f(\textbf{2},p_{E_2})} & {\M_r^f(\textbf{3},p_{E_1,E_2})} & {\M_r^f(\textbf{2},p_{E_1})} & {\M_r^f(\textbf2,p_{E_1\cup E_2})} && {\M_r^f(\textbf{3},p_{E_1,E_2})}
	\arrow["{q_{1_{E_1,E_2}}}"{description}, from=1-2, to=3-2]
	\arrow["{q_{1_{E_1}}}"{description}, from=1-2, to=3-3]
	\arrow["{\M_r^f(s_1)}"', from=3-2, to=3-3]
	\arrow["{q_{1_{E_2}}}"{description}, from=1-2, to=3-1]
	\arrow["{\M_r^f(s_2)}", from=3-2, to=3-1]
	\arrow["{q_{1_{E_1,E_2}}}"{description}, from=1-5, to=3-6]
	\arrow["{q_{1_{E_1\cup E_2}}}"{description}, from=1-5, to=3-4]
	\arrow["\M_r^f(s)"{description}, from=3-6, to=3-4]
\end{tikzcd}\]
Using the above diagrams we find \[\mu_y(E_1\cup E_2)=q_{1_{E_1\cup E_2}}(y)_1=q_{1_{E_1,E_2}}(y)_1+q_{1_{E_1,E_2}}(y)_2= q_{1_{E_1}}(y)_1+q_{1_{E_2}}(y)_1= \mu_y(E_1)+\mu_y(E_2),\]
which shows that $\mu_y$ is finitely additive. 

Let $(E_n)_n$ be a sequence of measurable subsets of $\bfomega$ that decreases to $\emptyset$. Because $0\leq \mu_y(E_n)\leq r\mathbb{P}(E_n)$, also \[0\leq \lim_n\mu_y(E_n)\leq \lim_n\mathbb{P}(E_n)=0.\] We can conclude that $\mu_y$ is an element of $\M_r(\bfomega)$. 

The assignment $y\mapsto \mu_y$ defines a map $q:Y\to \M_r(\bfomega)$. It can be checked that this map is $1$-Lipschitz and that it defines morphism of cones. Furthermore, this morphism of cones is unique. We can now conclude that $\M_r(\bfomega)= \mathrm{Ran}_i\M_r(\bfomega)$.
\end{proof}

We have just described how the functor $\mathrm{Ran}_i\M_r^f$ behaves on objects. In the following proposition we will study how it acts on morphisms.

\begin{prop}\label{RN2.7}
For a measure preserving map of probability space $f:\bfomega_1\to \bfomega_2$,  
\[\M_r(f)(\mu)=\mu\circ f^{-1},\]
for all $\mu\in \M_r(\bfomega_1).$
\end{prop}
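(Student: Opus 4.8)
The plan is to read off the action of the right Kan extension on morphisms directly from its defining universal property, using the explicit description of the limiting cone established in Theorem \ref{RN2.6}. Recall that for a probability space $\bfomega$ the space $\M_r(\bfomega)$ is the limit of $D_\bfomega$, with limiting cone given by the projections $p_g:\M_r(\bfomega)\to \M_r^f(\mathbf{A})$, $p_g(\mu)=\mu\circ g^{-1}$, indexed by the measure preserving maps $g:\bfomega\to \mathbf{A}$ into finite probability spaces. By the general theory of right Kan extensions, $\M_r(f):\M_r(\bfomega_1)\to \M_r(\bfomega_2)$ is the unique $1$-Lipschitz map induced by the universal property of the limit $\M_r(\bfomega_2)$: precomposition with $f$ sends an object $(\mathbf{B},h:\bfomega_2\to \mathbf{B})$ of $\bfomega_2\downarrow i$ to the object $(\mathbf{B},h\circ f)$ of $\bfomega_1\downarrow i$, so the maps $(p_{h\circ f})_{(\mathbf{B},h)}$ form a cone over $D_{\bfomega_2}$ with vertex $\M_r(\bfomega_1)$, and $\M_r(f)$ is the unique map to the limit satisfying $p_h\circ \M_r(f)=p_{h\circ f}$ for every $(\mathbf{B},h)$.

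Next I would check that the candidate assignment $\mu\mapsto \mu\circ f^{-1}$ is a well-defined map $\M_r(\bfomega_1)\to \M_r(\bfomega_2)$. Since $f$ is measure preserving, $\mathbb{P}_1\circ f^{-1}=\mathbb{P}_2$, so for $\mu\leq r\mathbb{P}_1$ and any measurable $B$ we have $(\mu\circ f^{-1})(B)=\mu(f^{-1}(B))\leq r\mathbb{P}_1(f^{-1}(B))=r\mathbb{P}_2(B)$; hence $\mu\circ f^{-1}\in \M_r(\bfomega_2)$, and the assignment is $1$-Lipschitz because the pushforward does not increase total variation.

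Finally I would verify that this candidate satisfies the very cone equations that characterise $\M_r(f)$, and then invoke uniqueness. For every $(\mathbf{B},h)$ the projection $p_h$ applied to $\mu\circ f^{-1}$ gives $(\mu\circ f^{-1})\circ h^{-1}=\mu\circ (h\circ f)^{-1}=p_{h\circ f}(\mu)$, using the functoriality of the pushforward $(h\circ f)^{-1}=f^{-1}\circ h^{-1}$. This is exactly the defining equation $p_h\circ \M_r(f)=p_{h\circ f}$, so by the uniqueness part of the universal property $\M_r(f)(\mu)=\mu\circ f^{-1}$ for all $\mu\in \M_r(\bfomega_1)$.

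The computations here are routine; the only point requiring care is the bookkeeping of which comma category each index lives in and the direction of the reindexing functor $\bfomega_2\downarrow i\to \bfomega_1\downarrow i$ induced by $f$. Once that is set up correctly, the result drops out of the identity $(h\circ f)^{-1}=f^{-1}\circ h^{-1}$, so I anticipate no genuine obstacle beyond making the universal-property argument precise.
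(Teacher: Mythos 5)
Your proposal is correct and follows essentially the same route as the paper: both characterise $\M_r(f)$ via the universal property of the pointwise right Kan extension, i.e.\ as the unique map satisfying $p_h\circ \M_r(f)=p_{h\circ f}$ for all $h:\bfomega_2\to (A,p)$ with $(A,p)$ finite, and then observe that $\mu\mapsto \mu\circ f^{-1}$ satisfies these equations via $(h\circ f)^{-1}=f^{-1}\circ h^{-1}$. The only difference is that you spell out the verification (well-definedness, the $1$-Lipschitz bound, and the cone equations) that the paper dismisses as clear, which is a faithful elaboration rather than a different argument.
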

\begin{proof}
By the universal property of right Kan extensions, we know that $M_r(f):M_r(\bfomega_1)\to M_r(\bfomega_2)$ is the unique morphism such that \[\begin{tikzcd}
	{\mathrm{M}_r(\mathbf{\Omega}_1)} && {\mathrm{M}_r(\mathbf{\Omega}_2)} \\
	& {\mathrm{M}_r^f(A,p)}
	\arrow["{\mathrm{M}_r(f)}", from=1-1, to=1-3]
	\arrow["{p_{hf}^1}"', from=1-1, to=2-2]
	\arrow["{p_f^2}", from=1-3, to=2-2]
\end{tikzcd}\] commutes for every measure-preserving map $h:\bfomega_2\to (A,p)$, where $(A,p)$ is some finite probability space. Here $p_{hf}^1$ and $p_f^2$ are the projection maps defined in the proof of Theorem \ref{RN2.6}. It is clear that the map $\M_r(\bfomega_1)\to \M_r(\bfomega_2)$ defined by the assignment $\mu\mapsto \mu\circ f^{-1}$ satisfies this property and therefore the claim follows.
\end{proof}

Theorem \ref{RN2.6} and Proposition \ref{RN2.7} tell us that the functor $\mathrm{Ran}_i\M_r^f:\textbf{Prob}\to \mathbf{CMet}$ expresses measures. We will therefore use the notation $\M_r:=\mathrm{Ran}_i\M_r^f$ from now on.

Furthermore, note that in the proofs of Theorem \ref{RN2.6} and Proposition \ref{RN2.7} we have not used any non-trivial measure-theoretic results. The proofs in this section are straightforward categorical proofs.

For $r\leq s$, there is a natural transformation $\M_r^f\to \M_s^f$ and therefore a natural transformation $\M_r\to \M_s$. This natural transformation is given by the inclusion maps $$\M_r(\bfomega)\to \M_s(\bfomega),$$ for all probability spaces $\bfomega$. This gives us a diagram $$D_\mathrm{M}:(0,\infty)\to [\textbf{Prob},\mathbf{CMet}]$$ of functors and natural transformations \[\begin{tikzcd}
	\ldots & {M_1} & \ldots & {M_2} & \ldots & {M_r} & \ldots.
	\arrow[from=1-1, to=1-2]
	\arrow[from=1-2, to=1-3]
	\arrow[from=1-3, to=1-4]
	\arrow[from=1-4, to=1-5]
 \arrow[from=1-5, to=1-6]
  \arrow[from=1-6, to=1-7]
\end{tikzcd}\]

In the rest of this section, we will study what the colimit of this diagram looks like. The next proposition tells us how the colimiting functor acts on objects.

\begin{prop}\label{RN2.8}
    Let $\bfomega$ be a probability space, then $\mathrm{colim}D_\mathrm{M}(\bfomega)=\M(\bfomega).$
\end{prop}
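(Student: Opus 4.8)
The plan is to combine two standard facts: colimits in the functor category $[\textbf{Prob},\mathbf{CMet}]$ are computed pointwise, and colimits in $\mathbf{CMet}$ are computed by taking the colimit in $\mathbf{Met}$ and then reflecting, since the inclusion $\mathbf{CMet}\hookrightarrow\mathbf{Met}$ is reflective with reflector given by metric completion. Fixing a probability space $\bfomega$, the transition maps $\M_r(\bfomega)\to\M_s(\bfomega)$ for $r\le s$ are the inclusions, and since the total variation metric on $\M_s(\bfomega)$ restricts to that on $\M_r(\bfomega)$, these are isometric embeddings. The indexing poset $(0,\infty)$ is directed, so the colimit in $\mathbf{Met}$ of this chain is the union $\bigcup_{r>0}\M_r(\bfomega)$ with the total variation metric, and $\mathrm{colim}\,D_\mathrm{M}(\bfomega)$ is its completion.

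It therefore suffices to identify $\M(\bfomega)$ with this completion. I would first observe that $\bigcup_{r>0}\M_r(\bfomega)$ is an isometric subspace of $\M(\bfomega)$, since each $\mu\le r\mathbb{P}$ is in particular absolutely continuous with respect to $\mathbb{P}$. As $\M(\bfomega)$ is complete by Proposition \ref{RN2.2}, and the completion of a dense subspace of a complete space is that space, the whole statement reduces to the claim that $\bigcup_{r>0}\M_r(\bfomega)$ is dense in $\M(\bfomega)$ for the total variation metric.

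This density statement is the main obstacle. Since the Radon–Nikodym theorem is not yet available at this stage, I cannot approximate by truncating a density $\tfrac{\mathrm{d}\mu}{\mathrm{d}\mathbb{P}}$, so I would instead work directly with the measures. Given $\mu\ll\mathbb{P}$ and $r>0$, form the truncation $\mu_r:=\mu\wedge r\mathbb{P}$, the infimum of $\mu$ and $r\mathbb{P}$ in the lattice of finite measures; concretely, using a Hahn decomposition $\Omega=P_r\sqcup N_r$ of the signed measure $\mu-r\mathbb{P}$, set $\mu_r(A):=\mu(A\cap N_r)+r\mathbb{P}(A\cap P_r)$. One checks directly that $\mu_r\le r\mathbb{P}$, so $\mu_r\in\M_r(\bfomega)$, and that
$$d_{\mathrm{TV}}(\mu,\mu_r)=(\mu-r\mathbb{P})(P_r)=\mu(P_r)-r\mathbb{P}(P_r)\le \mu(P_r).$$
It then remains to show $\mu(P_r)\to 0$ as $r\to\infty$. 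From $r\mathbb{P}(P_r)\le\mu(P_r)\le\mu(\Omega)<\infty$ I get $\mathbb{P}(P_r)\le\mu(\Omega)/r\to 0$, and the strong form of absolute continuity for finite measures (for every $\epsilon>0$ there is $\delta>0$ with $\mathbb{P}(A)<\delta$ implying $\mu(A)<\epsilon$) then forces $\mu(P_r)\to 0$. Hence $d_{\mathrm{TV}}(\mu,\mu_r)\to 0$, which establishes density and completes the proof.
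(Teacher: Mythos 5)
Your proof is correct, and its skeleton matches the paper's: both arguments reduce, via pointwise colimits and the reflectivity of $\mathbf{CMet}$ in $\mathbf{Met}$, to showing that $S=\bigcup_{r>0}\M_r(\bfomega)$ is dense in the complete space $\M(\bfomega)$ (the paper phrases the last step as the universal property of the dense inclusion $S\hookrightarrow\M(\bfomega)$ rather than as ``completion of the union,'' but these are the same reduction). The genuine difference is in the density argument, and here your route is not just different but more careful. The paper also truncates, setting $\mu_n:=\mu\wedge n\mathbb{P}$ via a partition formula, and then asserts $\sum_k\lvert\mu(E_k)-\mu_n(E_k)\rvert=\mu(\Omega)-\mu(\Omega)\wedge n$, i.e.\ $\mu_n(\Omega)=\mu(\Omega)\wedge n$. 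That identity fails in general: on $\Omega=\{a,b\}$ with $\mathbb{P}$ uniform, $\mu(\{a\})=2$, $\mu(\{b\})=0$, the lattice infimum satisfies $(\mu\wedge\mathbb{P})(\Omega)=\tfrac12$ while $\mu(\Omega)\wedge 1=1$; in general one only has $\mu_n(\Omega)\leq\mu(\Omega)\wedge n$, which makes $\mu(\Omega)-\mu(\Omega)\wedge n$ a \emph{lower} bound on $d_{\mathrm{TV}}(\mu,\mu_n)$, not an upper bound. Your computation $d_{\mathrm{TV}}(\mu,\mu_r)=(\mu-r\mathbb{P})(P_r)\leq\mu(P_r)$, combined with the Chebyshev-type bound $\mathbb{P}(P_r)\leq\mu(\Omega)/r$ and the $\epsilon$--$\delta$ form of absolute continuity (which for finite measures is elementary, provable by Borel--Cantelli and continuity from above, with no appeal to Radon--Nikodym), supplies exactly the estimate the paper's sketch needs, and absolute continuity visibly enters where it must. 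One caveat of taste rather than correctness: the paper's stated programme is to bypass the Hahn decomposition theorem, on which the classical Radon--Nikodym proof rests, so invoking Hahn here cuts against the spirit of the development even though it causes no circularity (Hahn does not depend on Radon--Nikodym, and Proposition \ref{RN2.8} is only consumed downstream of the bounded case). You can make your argument Hahn-free at no cost: define $\mu_r(E):=\inf\{\mu(A)+r\mathbb{P}(E\setminus A)\mid A\subseteq E\text{ measurable}\}$, check additivity and $\mu_r\leq r\mathbb{P}$ directly, note $d_{\mathrm{TV}}(\mu,\mu_r)=\sup_B\left(\mu(B)-r\mathbb{P}(B)\right)$, and run your same two-case estimate: if $\mathbb{P}(B)<\delta$ then $\mu(B)<\epsilon$, while if $\mathbb{P}(B)\geq\delta$ then $\mu(B)-r\mathbb{P}(B)\leq\mu(\Omega)-r\delta\leq 0$ once $r\geq\mu(\Omega)/\delta$.
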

\begin{proof}
Consider the subset $$S:=\{\mu\in \M(\bfomega)\mid \exists r>0: \mu \leq r\mathbb{P}\}.$$ We will show that $S$ is dense in $\M(\bfomega)$. For $\mu\in \M(\bfomega)$, define $$\mu_n:=\mu\wedge n\mathbb{P},$$
This means that $$\mu_n(E)=\sup\left\{\sum_{k=1}^\infty \mu(E_k)\wedge n\mathbb{P}(E_k)\mid \bigcup_{k=1}^\infty E_k \subseteq E\right\},$$
and clearly $\mu_n\leq n\mathbb{P}$ and $\mu_n\leq \mu$.

For a countable partition $(E_k)_{k=1}^\infty$ and a natural number $n\geq 1$, we find $$\sum_{k=1}^\infty\lvert \mu(E_k)-\mu_n(E_k)\rvert = \sum_{k=1}^\infty \mu(E_k)-\mu_n(E_k) = \mu(\Omega)-\mu(\Omega)\wedge n.$$
Taking the supremum over all countable partitions of $\Omega$ gives $$d_{\mathrm{TV}}(\mu,\mu_n)\leq \mu(\Omega)-(\mu(\Omega)\wedge n)$$ and therefore $\mu_n\to \mu$ in $\M(\bfomega)$. 

Thus, for any complete metric space $Y$ and $1$-Lipschitz map $f:S\to Y$, there is a unique $1$-Lipschitz map $\tilde{f}:\M(\bfomega)\to Y$. It follows that $\mathrm{colim}(D_\mathrm{M}(\bfomega))=\M(\bfomega)$ and therefore $(\mathrm{colim}D_\mathrm{M})(\bfomega)=\M(\bfomega)$. 
\end{proof}

We will now discuss what the colimit of $D_\mathrm{M}$ does on morphisms.

\begin{prop}\label{RN2.9}
    Let $\bfomega_1:=(\Omega_1,\mathcal{F}_1,\mathbb{P}_1)$ and $\bfomega_2:=(\Omega_2,\mathcal{F}_2,\mathbb{P}_2)$ be probability spaces and let $f:\bfomega_1\to\bfomega_2$ be a measure preserving map. Then $(\mathrm{colim}D_\mathrm{M})(f)$ is the $1$-Lipschitz map $\M(\bfomega_1)\to\M(\bfomega_2)$ defined by $$\mu\mapsto \mu\circ f^{-1}.$$
\end{prop}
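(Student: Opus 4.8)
The plan is to leverage the colimit structure established in Proposition \ref{RN2.8} together with the known behavior of each $\M_r$ on morphisms from Proposition \ref{RN2.7}. The key observation is that $(\mathrm{colim}\,D_\mathrm{M})(f)$ is, by the universal property of the colimit, the unique $1$-Lipschitz map compatible with the colimiting cocone maps. Since each $\M_r(f)$ is already known to send $\mu \mapsto \mu \circ f^{-1}$ (Proposition \ref{RN2.7}), the natural candidate for the colimit map is the global pushforward $\mu \mapsto \mu \circ f^{-1}$, and the task reduces to verifying that this candidate is well-defined, $1$-Lipschitz, and genuinely agrees with the colimit construction.

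First I would recall from the proof of Proposition \ref{RN2.8} that $\M(\bfomega_i)$ is the completion (in the metric-space sense relevant to $\mathbf{CMet}$) of the dense subspace $S_i := \{\mu \in \M(\bfomega_i) \mid \exists r>0,\ \mu \le r\mathbb{P}_i\}$, and that the colimiting cocone maps $\M_r(\bfomega_i) \to \M(\bfomega_i)$ are the evident inclusions. I would then check that the assignment $\mu \mapsto \mu \circ f^{-1}$ restricts correctly to the bounded pieces: if $\mu \le r\mathbb{P}_1$, then for every measurable $B \subseteq \Omega_2$ we have $(\mu \circ f^{-1})(B) = \mu(f^{-1}(B)) \le r\mathbb{P}_1(f^{-1}(B)) = r\mathbb{P}_2(B)$, using that $f$ is measure-preserving; hence $\mu \circ f^{-1} \le r\mathbb{P}_2$, so pushforward maps $\M_r(\bfomega_1)$ into $\M_r(\bfomega_2)$ and agrees with $\M_r(f)$ there. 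This shows that the pushforward map is compatible with every map in the diagram $D_\mathrm{M}$, i.e. it commutes with the natural transformations $\M_r \to \M_s$, and therefore induces a map on colimits which on each $S_1 \subseteq \M(\bfomega_1)$ coincides with $\mu \mapsto \mu \circ f^{-1}$.

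Next I would verify that $\mu \mapsto \mu \circ f^{-1}$ is $1$-Lipschitz as a map $\M(\bfomega_1) \to \M(\bfomega_2)$ for the total variation metric. For any measurable partition $(B_n)$ of $\Omega_2$, the family $(f^{-1}(B_n))$ is a measurable partition of $\Omega_1$, so
\[
\sum_n \lvert (\mu \circ f^{-1})(B_n) - (\nu \circ f^{-1})(B_n)\rvert = \sum_n \lvert \mu(f^{-1}(B_n)) - \nu(f^{-1}(B_n))\rvert \le d_{\mathrm{TV}}(\mu,\nu),
\]
and taking the supremum over partitions gives $d_{\mathrm{TV}}(\mu \circ f^{-1}, \nu \circ f^{-1}) \le d_{\mathrm{TV}}(\mu,\nu)$. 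Since the pushforward map is already defined on all of $\M(\bfomega_1)$ (not merely on $S_1$) and is continuous, it is automatically the unique $1$-Lipschitz extension of its restriction to the dense subspace $S_1$. By the density and uniqueness argument in Proposition \ref{RN2.8}, this extension is precisely $(\mathrm{colim}\,D_\mathrm{M})(f)$, which completes the proof.

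I do not expect a serious obstacle here; the statement is essentially a bookkeeping result confirming that the colimit operation does not change the formula for the action on morphisms. The only point requiring mild care is confirming that the candidate pushforward map, already globally defined on $\M(\bfomega_1)$, coincides with the abstract colimit map — this follows from the uniqueness of $1$-Lipschitz extensions from the dense subspace $S_1$, so the main work is simply assembling the pieces from Propositions \ref{RN2.7} and \ref{RN2.8} rather than proving anything genuinely new.
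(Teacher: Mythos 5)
Your proposal is correct and follows essentially the same route as the paper: both identify $(\mathrm{colim}\,D_{\mathrm{M}})(f)$ as the unique map compatible with the inclusions $\M_r(\bfomega_i)\to\M(\bfomega_i)$ and then observe that $\mu\mapsto\mu\circ f^{-1}$ has this property via Proposition \ref{RN2.7}. You simply make explicit some routine verifications the paper leaves implicit (that pushforward preserves the bound $\mu\le r\mathbb{P}$, that it is $1$-Lipschitz for $d_{\mathrm{TV}}$, and that uniqueness follows from density of the bounded measures), which is sound but not a different argument.
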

\begin{proof}
    The map $(\mathrm{colim}D_\mathrm{M})(f)$ is the unique map $\M(\bfomega_1)\to \M(\bfomega_2)$ such that the following diagram commutes for every natural number $r>0$

    \[\begin{tikzcd}
	{\mathrm{M}(\mathbf{\Omega}_1)} && {\mathrm{M}(\mathbf{\Omega}_2)} \\
	{\mathrm{M}_r(\mathbf{\Omega}_1)} && {\mathrm{M}_r(\mathbf{\Omega}_2)}
	\arrow[from=2-1, to=1-1]
	\arrow[from=2-3, to=1-3]
	\arrow["{\mathrm{M}_r(f)}"', from=2-1, to=2-3]
	\arrow["{(\mathrm{colim}D_\mathrm{M})(f)}", from=1-1, to=1-3]
\end{tikzcd}\]

The $1$-Lipschitz maps $\M(\bfomega_1)\to \M(\bfomega_2):\mu\mapsto \mu\circ f^{-1}$ satisfies this condition and therefore it has to be equal to $(\mathrm{colim}D_\mathrm{M})(f)$.
\end{proof}

Proposition \ref{RN2.8} and Proposition \ref{RN2.9} tell us that the functor $\mathrm{colim}D_\M$ describes measures. Therefore we will from now on use the notation $$\M:=\mathrm{colim}D_\M.$$

\subsection{\texorpdfstring{The random variables functor $\RV$}{The random variables functor RV}}\label{RNSection2.3}

We will start this section by describing what the right Kan extension of the functor $\RV_r^f:\textbf{Prob}_f\to\mathbf{CMet}$ along the functor $i:\textbf{Prob}_f\to \textbf{Prob}$ looks like. 
\[\begin{tikzcd}
	{\textbf{Prob}_f} && {\mathbf{CMet}} \\
	\\
	{\textbf{Prob}}
	\arrow["{\mathrm{RV}_r^f}", from=1-1, to=1-3]
	\arrow["i"', from=1-1, to=3-1]
	\arrow["{\mathrm{Ran}_i\mathrm{RV}_r^f}"', dashed, from=3-1, to=1-3]
\end{tikzcd}\]
We will do this by first showing how $\mathrm{Ran}_i\RV_r^f$ acts on objects in Theorem \ref{RN2.10} and then how it acts on morphisms in Proposition \ref{RN2.11}. The conclusion will be that this right Kan extension describes bounded random variables on arbitrary probability spaces. We will therefore introduce the notation $\RV_r$ to mean the functor $\mathrm{Ran}_i\RV_r^f:\textbf{Prob}\to \mathbf{CMet}$.

We will proceed to section by showing that these functors form a diagram $D_\RV$: 
\[\begin{tikzcd}
	\ldots & {\mathrm{RV}_1} & \ldots &  {\mathrm{RV}_2} & \ldots & {\mathrm{RV}_n} & \ldots
	\arrow[from=1-1, to=1-2]
	\arrow[from=1-2, to=1-3]
	\arrow[from=1-3, to=1-4]
	\arrow[from=1-4, to=1-5]
 \arrow[from=1-5, to=1-6]
  \arrow[from=1-6, to=1-7]
\end{tikzcd}\] In the remaining part of the section we will study the colimit of $D_\RV$. We will show that this colimiting functor describes random variables and we will therefore denote this functor as $\RV$.

\begin{thm}\label{RN2.10}
Let $\bfomega:=(\Omega,\mathcal{F},\mathbb{P})$ be a probability space. Then
\[\textup{Ran}_i\RV_r^f(\bfomega)=\RV_r(\bfomega).\]
\end{thm}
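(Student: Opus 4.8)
The plan is to mirror the structure of the proof of Theorem \ref{RN2.6} as closely as possible, exploiting the fact that right Kan extensions along $i$ are computed pointwise as limits over comma categories. Concretely, I would show that $\RV_r(\bfomega)$ together with suitable projection maps is a limit of the diagram
\[
D_\bfomega : \bfomega \downarrow i \xrightarrow{U} \textbf{Prob}_f \xrightarrow{\RV_r^f} \mathbf{CMet},
\]
where $U$ is the forgetful functor. Since $\mathbf{CMet}$ has all limits and the Riesz--Fischer theorem (Theorem \ref{RN2.1}) guarantees that $\RV_r(\bfomega)$ is itself a complete metric space, it suffices to exhibit $\RV_r(\bfomega)$ as the universal cone. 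For a measure-preserving map $f:\bfomega\to \mathbf{A}$ into a finite probability space $\mathbf{A}=(A,p)$, I would define the projection $p_f:\RV_r(\bfomega)\to \RV_r^f(\mathbf{A})$ to be $p_f(g) := \RV^f(f)(g)$, that is, the conditional-expectation-type averaging map appearing in the definition of $\RV^f$ on morphisms (this is exactly $\mathbb{E}[g\mid f]$ from Example \ref{RN2.4}). Checking that these maps are $1$-Lipschitz and assemble into a cone over $D_\bfomega$ is the routine part, following verbatim the corresponding step in Theorem \ref{RN2.6}.

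The substance of the proof is universality. Given any competing cone $(Y,(q_f)_f)$, I must construct a unique $1$-Lipschitz map $q:Y\to \RV_r(\bfomega)$, i.e. from an abstract element $y\in Y$ I must reconstruct an honest bounded random variable $h_y\in \RV_r(\bfomega)$. The strategy is the same as in Theorem \ref{RN2.6}: probe $y$ through the finite quotients. For each finite measurable partition $\mathcal{P}=\{E_1,\dots,E_k\}$ of $\Omega$, form the finite probability space on $\{1,\dots,k\}$ with the pushed-forward measure and the canonical measure-preserving map $1_{\mathcal{P}}:\bfomega\to (\mathbf{k},p_{\mathcal{P}})$; then $q_{1_{\mathcal{P}}}(y)$ is a bounded random variable constant on each block $E_i$, i.e. a $\sigma(\mathcal{P})$-measurable simple function $h_y^{\mathcal{P}}$ bounded by $r$. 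The cone condition, tested against the refinement maps between partitions (just as the maps $s,s_1,s_2$ were used in the proof of Theorem \ref{RN2.6}), forces these simple functions to be \emph{compatible} under conditional expectation: refining $\mathcal{P}$ leaves the coarser averages unchanged. This exhibits $(h_y^{\mathcal{P}})_{\mathcal{P}}$ as a martingale indexed by the cofiltered poset of finite partitions.

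The main obstacle — and the only place genuine measure theory enters — is showing this compatible family of simple functions converges to a limit $h_y\in\RV_r(\bfomega)$, and that the assignment $y\mapsto h_y$ is $1$-Lipschitz. I would argue that the net $(h_y^{\mathcal{P}})_{\mathcal{P}}$ is Cauchy in the $L^1$-metric, then invoke completeness of $\RV_r(\bfomega)$ from Theorem \ref{RN2.1} to produce the limit $h_y$; boundedness by $r$ is preserved in the limit since $\RV_r$ is closed in $\RV$. The $1$-Lipschitz estimate for $q$ follows because each $q_{1_{\mathcal{P}}}$ is $1$-Lipschitz and the $L^1$-distances between the $h_y^{\mathcal{P}}$ approximate $d_{L^1}(h_y,h_{y'})$ from below. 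Finally I would verify that $p_f\circ q = q_f$ for all $f$ — using that every $f$ factors through some finite partition — and that $q$ is the unique such map, because an element of $\RV_r(\bfomega)$ is determined by its conditional expectations onto all finite partitions. This last determinacy statement is the $\RV$-analogue of the fact that a measure is determined by its values on all measurable sets, and I expect the Cauchy/convergence argument to be the technically delicate step, whereas the categorical bookkeeping transfers directly from Theorem \ref{RN2.6}.
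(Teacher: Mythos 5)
Your overall architecture coincides with the paper's proof of Theorem \ref{RN2.10}: compute the right Kan extension pointwise as the limit of $D_\bfomega$ over the comma category $\bfomega\downarrow i$, take as projections the conditional-expectation averaging maps (your $p_f$ is exactly the paper's $p_f(g)(a)=\frac{1}{p_a}\int_{f^{-1}(a)}g\,\mathrm{d}\mathbb{P}$ for $p_a\neq 0$), reconstruct from a competing cone $(Y,(q_f)_f)$ the simple functions $s_f^y=\sum_{a}q_f(y)(a)1_{f^{-1}(a)}$, use cofilteredness of $\bfomega\downarrow i$ to view these as a net compatible under refinement, and finish by Cauchyness plus Riesz--Fischer (Theorem \ref{RN2.1}). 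Your restriction to the canonical partition maps $1_{\mathcal{P}}$ is harmless, since these are initial in $\bfomega\downarrow i$ and the cone condition forces $s_f^y$ to depend only on the partition induced by $f$; your Lipschitz estimate and uniqueness arguments also match the paper's.

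However, there is a genuine gap at exactly the point you yourself flag as delicate: you \emph{assert} that the net $(h_y^{\mathcal{P}})_{\mathcal{P}}$ is $L^1$-Cauchy but give no argument. This is not routine bookkeeping --- it is (a form of) the bounded martingale convergence theorem, and the entire analytic content of Theorem \ref{RN2.10} lives in this step; a uniformly bounded family compatible under conditional expectation is not Cauchy merely by compatibility, since refinement can in principle keep redistributing mass. The paper closes this with an elementary $L^2$ argument (inequalities \eqref{1} and \eqref{2}, proved in Lemma \ref{appendix}): conditional Jensen shows the second moments $\mathbb{E}[(s_f^y)^2]$ are monotone along refinement and bounded by $r^2$, hence convergent as a net, while the Pythagorean identity $\mathbb{E}[(s_f^y-s_g^y)^2]=\mathbb{E}[(s_f^y)^2]-\mathbb{E}[(s_g^y)^2]$ combined with Cauchy--Schwarz bounds $d_{L^1}(s_f^y,s_g^y)^2$ by a difference of second moments, yielding the Cauchy property. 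Some such argument (or a substitute, e.g.\ an upcrossing estimate) must be supplied; note that simply citing the classical $L^1$ martingale convergence theorem here would undercut the paper's program, since Section \ref{RNSection3} derives a martingale convergence theorem \emph{from} this result (cf.\ Remark \ref{RN2.12}).
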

\begin{proof}
Let $U:\bfomega\downarrow i \to \textbf{Prob}_f$ be the forgetful functor and let $D_\bfomega$ denote the diagram 
\[\bfomega\downarrow i \xrightarrow{U} \textbf{Prob}_f\xrightarrow{\RV_r^f}\mathbf{CMet}.\]
We will now show that $\RV_r(\bfomega)=\lim D_\bfomega.$

For a measure-preserving map $f:\bfomega\to (A,p)$ where $(A,p)$ is some finite probability space, define a map $p_f:\RV_r(\bfomega)\to \RV_r^f(A,p)$ by \[p_f(g)(a):=\begin{cases}\frac{1}{p_a}\int_{f^{-1}(a)}g\text{d}\mathbb{P} \text{ if }p_a\not=0\\
0 \text{ otherwise},\end{cases}\]
for every $g$ in $\RV_r(\bfomega)$ and $a$ in $A$. It is straightforward to check that the definition of $p_f$ is independent of the choice of representative. It can also be checked that $p_f$ is $1$-Lipschitz.
Consider a commutative diagram \[\begin{tikzcd}\bfomega \arrow[r,"f_1"]\arrow[rd,"f_2",swap] & (A,p)\arrow[d,"s"] \\
& (B,q)\end{tikzcd}\]
Let $g$ in $\RV_r(\bfomega)$ and $b$ in $B$, we have \begin{align*}
    \left[\left(\RV_r^f(s)\circ p_{f_1}(g)\right)(b) \right]q_b &= \sum_{a\in s^{-1}(b)}p_ap_{f_1}(g)(a) \\
    &= \sum_{a\in s^{-1}(b)}\int_{f^{-1}(a)}g\text{d}\mathbb{P}\\
    &= \int_{s^{-1}(b)}g\text{d}\mathbb{P}.
\end{align*}
It now follows that $(\RV_r(\bfomega),(p_f)_f)$ is a cone over the diagram $D_\bfomega$. We will now show that this cone is universal. To do that, we consider another cone $(Y,(q_f)_f)$ over the diagram $D_\bfomega$. 

%Let $(I,\triangleleft)$ be the directed set from Lemma \ref{directed}.
For $y\in Y$ and measure-preserving map $\bfomega\xrightarrow{f} (A,p)$, we define a simple function $\Omega\to [0,\infty)$ as follows: 
\[s_f^y=\sum_{a\in A}q_f(y)(a)1_{f^{-1}(a)}.\]
Consider a commutative diagram \[\begin{tikzcd}
	\bfomega && {(A,p)} \\
	&& {(B,r)}
	\arrow["f", from=1-1, to=1-3]
	\arrow["g"', from=1-1, to=2-3]
	\arrow["s", from=1-3, to=2-3]
\end{tikzcd}\]
We find for every $b\in B$ that 
\[r_l \int_{g^{-1}(b)}s_f^y\text{d}\mathbb{P}=\sum_{s(a)=b}q_f(y)(a)p_a=q_g(y)(b)r_b\]
It follows that $p_g(s_f^y)=q_g(y)$.
Note that $\bfomega\downarrow i$ is cofiltered and therefore $(s_f^y)_f$ forms a net in $\RV_r(\bfomega)$. Suppose now that $(s_f^y)_f$ has a limit $s^y$ in $\RV_r(\bfomega)$. Then it is easy to see that $p_g(s^y)=q_g(y)$ for all $g \in \bfomega\downarrow i$. By the Riesz-Fischer theorem (Theorem \ref{RN2.1}) we only need to show that $(s_f^y)_f$ is a Cauchy net.

For this we will use the following two inequalities, which we will prove in the Appendix. For a commutative diagram \[\begin{tikzcd}
	\bfomega && {(A,p)} \\
	&& {(B,r)}
	\arrow["f", from=1-1, to=1-3]
	\arrow["g"', from=1-1, to=2-3]
	\arrow["s", from=1-3, to=2-3]
\end{tikzcd}\]we have  \begin{equation}\label{1}
    \mathbb{E}[\left(s_g^y\right)^2] \leq \mathbb{E}[(s_f^y)^2]. 
\end{equation}
and 
\begin{equation}\label{2} 0 \leq d_{L^1}(s_f^y,s_g^y)^2\leq \mathbb{E}[(s_f^y-s_g^y)^2]= \mathbb{E}[(s_f^y)^2]-\mathbb{E}[(s_g^y)^2]\end{equation}
Now by inequaltiy \eqref{1} we conclude that $\left(\mathbb{E}\left[(s_f^y)^2\right]\right)_f$ is a bounded, monotone net and therefore it converges. Using \eqref{2} we can now conclude that $(s_f^y)_f$ is a Cauchy net. The Riesz-Fischer theorem (Theorem \ref{RN2.1}) tells us that the net $(s_f^y)_f$ converges to some $s^y$ in $\RV_r(\bfomega)$. This defines a map $Y\to G_n(X,\Sigma,\mathbb{P})$. It can be checked that this is a $1$-Lipschitz map. Because $p_g(s^y)=q_g(y)$ for every $g$ in $\bfomega\downarrow i$ it follows that this map is in fact a morphism of cones. Finally, it is straightforward to check that this is the unique morphism of cones. We can now conclude that $\RV_r(\bfomega)= \mathrm{Ran}_i\RV_r^f (\bfomega)$.
\end{proof}

We now know what the right Kan extension $\mathrm{Ran}_i\RV_r^f$ does on objects, but not yet how it acts on morphisms. This is described in the following proposition.

\begin{prop} \label{RN2.11}
Let  $f:\bfomega_1 \to \bfomega_2$ be a measure preserving map of probability spaces. Let $g\in \RV_r(\bfomega_1)$. Then $$\RV_r(f)(g)=\mathbb{E}[g\mid f].$$
\end{prop}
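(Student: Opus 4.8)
The plan is to mimic the proof of Proposition \ref{RN2.7}, exploiting the universal property of the right Kan extension $\RV_r=\mathrm{Ran}_i\RV_r^f$ rather than computing anything about $\RV_r(f)$ directly. By that universal property, together with the identification $\RV_r(\bfomega)=\lim D_\bfomega$ from Theorem \ref{RN2.10}, the morphism $\RV_r(f):\RV_r(\bfomega_1)\to\RV_r(\bfomega_2)$ is the unique $1$-Lipschitz map such that for every measure-preserving map $h:\bfomega_2\to (A,p)$ into a finite probability space the triangle with legs $p^1_{hf}$ and $p^2_h$ (the projection maps of the two limit cones, as defined in Theorem \ref{RN2.10}) commutes, i.e. $p^2_h(\RV_r(f)(g))=p^1_{hf}(g)$. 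It therefore suffices to show that $\mathbb{E}[g\mid f]$ satisfies this same condition.

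Before doing that, I would first check that $\mathbb{E}[g\mid f]$ actually lies in the bounded space $\RV_r(\bfomega_2)$, so that the statement even typechecks. Since $g\in\RV_r(\bfomega_1)$ means $g\le r$ almost surely, the defining equation (\ref{eq1}) gives $\int_B\mathbb{E}[g\mid f]\,\text{d}\mathbb{P}_2=\int_{f^{-1}(B)}g\,\text{d}\mathbb{P}_1\le r\,\mathbb{P}_1(f^{-1}(B))=r\,\mathbb{P}_2(B)$ for every $B\in\mathcal{F}_2$, and since this holds for all measurable $B$ it forces $\mathbb{E}[g\mid f]\le r$ almost surely. (Well-definedness of $\mathbb{E}[g\mid f]$ itself is guaranteed by the Radon-Nikodym theorem exactly as in Example \ref{RN2.4}.)

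The heart of the argument is the verification of the commuting triangle, and this is where the defining property of conditional expectation enters. Unwinding the projection maps from Theorem \ref{RN2.10}, for $a\in A$ with $p_a\ne 0$ we have on one side $p^1_{hf}(g)(a)=\frac{1}{p_a}\int_{(hf)^{-1}(a)}g\,\text{d}\mathbb{P}_1=\frac{1}{p_a}\int_{f^{-1}(h^{-1}(a))}g\,\text{d}\mathbb{P}_1$, and on the other side $p^2_h(\mathbb{E}[g\mid f])(a)=\frac{1}{p_a}\int_{h^{-1}(a)}\mathbb{E}[g\mid f]\,\text{d}\mathbb{P}_2$. Taking $B:=h^{-1}(a)$ in the defining equation (\ref{eq1}) for $\mathbb{E}[g\mid f]$ shows that these two integrals coincide, while for $a$ with $p_a=0$ both projections return $0$ by definition. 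Hence $\mathbb{E}[g\mid f]$ and $\RV_r(f)(g)$ satisfy the same universal condition, and uniqueness in the universal property yields $\RV_r(f)(g)=\mathbb{E}[g\mid f]$.

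I do not expect a serious obstacle here: once the universal property of the right Kan extension is spelled out, the only substantive input is equation (\ref{eq1}), and the computation reduces to the single substitution $B=h^{-1}(a)$. The one point demanding a little care is matching the projection maps $p_f$ of Theorem \ref{RN2.10} against the conditional-expectation defining equation and confirming that the candidate map lands in $\RV_r$ rather than merely in $\RV$, both of which are handled by the boundedness estimate above.
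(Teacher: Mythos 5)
Your proposal is correct and takes essentially the same approach as the paper: both arguments rest on the universal property of $\RV_r=\mathrm{Ran}_i\RV_r^f$ together with the defining equation (\ref{eq1}) of conditional expectation, the only (cosmetic) difference being direction --- the paper instantiates the commuting triangle at two-point spaces $h$ to show $\RV_r(f)(g)$ satisfies (\ref{eq1}) and concludes by almost-sure uniqueness of conditional expectation, whereas you verify the triangle for $\mathbb{E}[g\mid f]$ at every finite $h:\bfomega_2\to(A,p)$ and conclude by uniqueness coming from the limit description in Theorem \ref{RN2.10}. Your preliminary check that $\mathbb{E}[g\mid f]$ is bounded by $r$ almost surely, so that it lands in $\RV_r(\bfomega_2)$, is a point the paper leaves implicit and is a worthwhile addition.
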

\begin{proof}
By the defining property of conditional expectation it is enough to show that 
\[\mathbb{E}_{\bfomega_1}[g1_{f^{-1}(B)}]=\mathbb{E}_{\bfomega_2}[\RV_r(f)(g)1_B]. \]
for all measurable subsets $B$ of $\bfomega_2$.
Since $\RV_r$ is defined as $\mathrm{Ran}_i\RV_r^f$, $\RV_r(f)$ is the unique map $\RV_r(X)\to \RV_r(Y)$ such that the following diagram commutes for every measure preserving map $h:\bfomega_2\to (A,p)$, where $(A,p)$ is some finite probability space.
\[\begin{tikzcd}
	{\RV_r(\bfomega_1)} && {\RV_r(\bfomega_2)} \\
	& {\RV_r^f(A,p)} & 
	\arrow["{\RV_r(f)}", from=1-1, to=1-3]
	\arrow["{p_{hf}^1}"', from=1-1, to=2-2]
	\arrow["{p_h^2}", from=1-3, to=2-2]
\end{tikzcd}\]
Here $p_{hf}^1$ and $p_f^2$ are the projection maps defined in the proof of Theorem \ref{RN2.10}. Let $\textbf{2}:=\{0,1\}$ and let $r$ be the probability measure on $\textbf{2}$ defined by $r_0:=\mathbb{P}_Y(B)$.
Consider the measure preserving map \[h:\bfomega_2\to (\textbf{2},r)\] defined by the assignment \[h(\omega):=\begin{cases} 1 \text{ if }\omega\in B\\
0 \text{ otherwise.}\end{cases}\]
We now find 

\[\mathbb{E}_{\bfomega_1}[g1_{f^{-1}(B)}]=p_{hf}^{\bfomega_1}(g)= p_h^{\bfomega_2}(\RV_r(f)(g)) = \mathbb{E}_{\bfomega_2}[\RV_r(f)(g)1_B].\] This proves the claim.

\end{proof}

%\begin{eg}\label{condexpeg}Let $(\Omega,\mathcal{F},\mathbb{P})$ be a probability space and let $\mathcal{G}$ be a sub-$\sigma$-algebra of $\mathcal{F}$. This corresponds to a measure preserving map $i:(\Omega,\mathcal{F},\mathbb{P})\to (\Omega,\mathcal{G},\mathbb{P}\mid_{\mathcal{G}})$. For a random variable $X:(\Omega,\mathcal{F})\to [0,n]$, we see that $\mathbb{E}[X\mid i]$ is a $\mathcal{G}$-measurable random variable such that \[\mathbb{E}[X1_B]=\mathbb{E}[\mathbb{E}[X\mid i]1_B]\]
%for all $B\in \mathcal{G}.$
%Therefore $\mathbb{E}[X\mid i]$ (we will also use the notation $\mathbb{E}[X\mid \mathcal{G}]$) is the usual conditional expectation of $X$ with respect to the sub-$\sigma$-algebra $\mathcal{G}$. 
%\end{eg}
\begin{rem}\label{RN2.12}
Note that the net $(s^y_f)_f$ in the proof of Theorem \ref{RN2.10} could be interpreted as a martingale. The argument we used to show that this net convergence then corresponds to the proof of the martingale convergence theorem in \cite{lamb}.
\end{rem}
For positive real numbers $r\leq s$, there is a natural transformation $\RV_r^f\to \RV_s^f$ and therefore a natural transformation $\RV_r\to \RV_s$. This natural transformation is given by the inclusion maps $$\RV_r(\bfomega)\to \RV_s(\bfomega),$$ for all probability space $\bfomega$. This gives a diagram $D_\RV:(0,\infty)\to [\textbf{Prob},\mathbf{CMet}]$ of functors an natural transformations \[\begin{tikzcd}
	\ldots & {\mathrm{RV}_1} & \ldots & {\mathrm{RV}_2} & \ldots & {\mathrm{RV}_r} & \ldots.
	\arrow[from=1-1, to=1-2]
	\arrow[from=1-2, to=1-3]
	\arrow[from=1-3, to=1-4]
	\arrow[from=1-4, to=1-5]
 	\arrow[from=1-5, to=1-6]
   	\arrow[from=1-6, to=1-7]
\end{tikzcd}\] In the rest of this section, we will describe what the colimit of this diagram looks like. We will describe the colimiting functor's behaviour on objects and morphisms in the following two propositions.

\begin{prop}\label{RN2.13}
    Let $\bfomega$ be a probability space, the $(\mathrm{colim}D_\RV)(\bfomega)=\RV(\Omega).$
\end{prop}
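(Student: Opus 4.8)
The plan is to follow exactly the strategy of Proposition~\ref{RN2.8}, replacing measures by random variables and the total variation metric by the $L^1$-metric. Since colimits in the functor category $[\textbf{Prob},\mathbf{CMet}]$ are computed pointwise, it suffices to identify the colimit of the directed diagram $(\RV_r(\bfomega))_{r>0}$ in $\mathbf{CMet}$, where the transition maps are the isometric inclusions $\RV_r(\bfomega)\hookrightarrow\RV_s(\bfomega)$ for $r\le s$. Because the index poset $(0,\infty)$ is directed and all transition maps are inclusions, the colimit of this diagram in $\textbf{Met}$ is the union $S:=\bigcup_{r>0}\RV_r(\bfomega)$ --- the subspace of (essentially) bounded nonnegative random variables --- equipped with the induced $L^1$-metric. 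As $\mathbf{CMet}$ is reflective in $\textbf{Met}$ with the completion functor as reflector, the colimit in $\mathbf{CMet}$ is the completion of $S$; so the whole statement reduces to showing that this completion is $\RV(\bfomega)$.

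The key analytic step, mirroring the truncation $\mu_n=\mu\wedge n\mathbb{P}$ used in Proposition~\ref{RN2.8}, is to show that $S$ is dense in $\RV(\bfomega)$. Given $g\in\RV(\bfomega)$, I would set $g_n:=g\wedge n$, which lies in $\RV_n(\bfomega)\subseteq S$ since $\mathbb{P}(g_n\le n)=1$. Then $\lvert g-g_n\rvert=(g-n)^{+}$, so that $d_{L^1}(g,g_n)=\int (g-n)^{+}\,\mathrm{d}\mathbb{P}$, and monotone convergence gives $d_{L^1}(g,g_n)\to 0$. Hence $g_n\to g$ in $\RV(\bfomega)$ and $S$ is dense.

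To finish, I would invoke the Riesz--Fischer theorem (Theorem~\ref{RN2.1}), which guarantees that $\RV(\bfomega)$ is complete. A cocone from the diagram $(\RV_r(\bfomega))_r$ to a complete metric space $Y$ is precisely a $1$-Lipschitz map $S\to Y$, and since $S$ is dense in the complete space $\RV(\bfomega)$ such a map extends uniquely along $S\hookrightarrow\RV(\bfomega)$ to a $1$-Lipschitz map $\RV(\bfomega)\to Y$. This is exactly the universal property exhibiting $\RV(\bfomega)$, together with the inclusions $\RV_r(\bfomega)\hookrightarrow\RV(\bfomega)$, as the colimit of the diagram in $\mathbf{CMet}$, so $(\mathrm{colim}\,D_\RV)(\bfomega)=\RV(\bfomega)$.

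I expect the main obstacle to be the density step rather than the abstract colimit computation: everything hinges on the truncation estimate $d_{L^1}(g,g\wedge n)=\int(g-n)^{+}\,\mathrm{d}\mathbb{P}\to 0$, which is where the actual content (approximability of integrable random variables by bounded ones) lives; the identification of the $\mathbf{CMet}$-colimit with a completion and the appeal to Riesz--Fischer for completeness are then formal.
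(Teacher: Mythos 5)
Your proposal is correct and follows essentially the same route as the paper's proof: both identify the dense subset $S$ of bounded random variables, use the truncation $g_n := g\wedge n$ with monotone convergence to establish density, and conclude via the unique $1$-Lipschitz extension of maps $S\to Y$ into complete targets. You are merely more explicit than the paper about two steps it leaves tacit --- that the $\mathbf{CMet}$-colimit is the completion of the $\mathbf{Met}$-colimit (via the reflection of Proposition~\ref{RN3.4}) and that the completeness of $\RV(\bfomega)$ comes from the Riesz--Fischer theorem (Theorem~\ref{RN2.1}) --- which is a virtue, not a deviation.
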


\begin{proof}
    Consider the subset $$S:=\{f\in \RV(\bfomega)\mid \exists r>0: \mathbb{P}(f\leq r)=1\}.$$ We will show that $S$ is dense in $\RV(\Omega)$. For $f\in \RV(\Omega)$, define $$f_n:=f\wedge n.$$
    Clearly, $f_n\in S$ and by the Monotone Convergence Theorem, we have that $f_n\to f$ in $\RV(\bfomega)$.

    It follows now that for any complete metric space $Y$ and $1$-Lipschitz map $f:S\to Y$, there is a unique $1$-Lipschitz map $\tilde{f}:\RV(\bfomega)\to Y$. From this it follows that $\mathrm{colim}(D_\RV(\bfomega))=\RV(\bfomega)$ and thus $(\mathrm{colim}D_\RV)(\bfomega)=\RV(\bfomega).$
\end{proof}

We will end this section by showing how $\mathrm{colim}D_\RV$ acts on morphisms.

\begin{prop}\label{RN2.14}
    Let $\bfomega_1:=(\Omega_1,\mathcal{F}_1,\mathbb{P}_1)$ and $\bfomega_2:=(\Omega_2,\mathcal{F}_2,\mathbb{P}_2)$ be probability spaces and let $g:\bfomega_1\to \bfomega_2$ be a measure preserving map. Then $(\mathrm{colim}D_\RV)(g)$ is the $1$-Lipschitz map $\RV(\bfomega_1)\to \RV(\bfomega_2)$ defined by $$f\mapsto \mathbb{E}[f\mid g].$$
\end{prop}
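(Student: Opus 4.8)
The plan is to mirror the proof of Proposition \ref{RN2.9} for measures, replacing the measures functor by $\RV$ and the pushforward by conditional expectation. By construction of the colimit (and the density argument in the proof of Proposition \ref{RN2.13}), the colimit cocone consists of the inclusions $\RV_r(\bfomega)\hookrightarrow \RV(\bfomega)$ whose union $S$ is dense, so that $(\mathrm{colim}D_\RV)(g)$ is the \emph{unique} $1$-Lipschitz map making the square
\[\begin{tikzcd}
	{\RV(\bfomega_1)} && {\RV(\bfomega_2)} \\
	{\RV_r(\bfomega_1)} && {\RV_r(\bfomega_2)}
	\arrow[from=2-1, to=1-1]
	\arrow[from=2-3, to=1-3]
	\arrow["{\RV_r(g)}"', from=2-1, to=2-3]
	\arrow["{(\mathrm{colim}D_\RV)(g)}", from=1-1, to=1-3]
\end{tikzcd}\]
commute for every natural number $r>0$, where the vertical arrows are these inclusions. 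So the whole task reduces to exhibiting one $1$-Lipschitz map with this property and invoking uniqueness.

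The candidate is $\Phi\colon \RV(\bfomega_1)\to \RV(\bfomega_2)$, $f\mapsto \mathbb{E}[f\mid g]$, with conditional expectation as in Example \ref{RN2.4}. First I would check commutativity: for $f\in \RV_r(\bfomega_1)$ Proposition \ref{RN2.11} already gives $\RV_r(g)(f)=\mathbb{E}[f\mid g]$, and since $f\le r$ almost surely forces $\mathbb{E}[f\mid g]\le r$ almost surely (compare the defining measure with $r\mathbb{P}_2$ via the defining property), this value indeed lands in $\RV_r(\bfomega_2)$; hence $\Phi$ restricts on each level to $\RV_r(g)$ and the squares commute. Second I would check that $\Phi$ is $1$-Lipschitz. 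The key pointwise estimate is $\lvert\mathbb{E}[f\mid g]-\mathbb{E}[h\mid g]\rvert\le \mathbb{E}[\lvert f-h\rvert\mid g]$ almost surely, obtained by splitting $f-h=(f-h)^+-(f-h)^-$ and using additivity of conditional expectation on the nonnegative parts; integrating against $\mathbb{P}_2$ and applying the defining property with $B=\Omega_2$ turns this into
\[d_{L^1}(\Phi f,\Phi h)\le \int \mathbb{E}[\lvert f-h\rvert\mid g]\,\mathrm{d}\mathbb{P}_2=\int \lvert f-h\rvert\,\mathrm{d}\mathbb{P}_1=d_{L^1}(f,h).\]

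With these two facts, $\Phi$ is a $1$-Lipschitz map agreeing with the required restriction on the dense subset $S=\bigcup_r \RV_r(\bfomega_1)$, so by continuity it coincides with the unique colimit map $(\mathrm{colim}D_\RV)(g)$, proving the claim. The main obstacle is the second step: showing $\Phi$ is genuinely well-defined and $1$-Lipschitz on \emph{all} of $\RV(\bfomega_1)$, rather than only on the bounded levels. Concretely, one must know that $\mathbb{E}[f\mid g]$ exists for arbitrary $f\in\RV(\bfomega_1)$ and is the continuous extension of the bounded-level maps; this is where the contractivity estimate above does the work, since it guarantees both that the net $\bigl(\mathbb{E}[f\wedge n\mid g]\bigr)_n$ is Cauchy (its limit being $\mathbb{E}[f\mid g]$ by monotone convergence for conditional expectation) and that the resulting map is $1$-Lipschitz, so that completeness of $\RV(\bfomega_2)$ (Theorem \ref{RN2.1}) supplies the limit.
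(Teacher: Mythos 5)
Your proposal is correct and follows essentially the same route as the paper: both characterize $(\mathrm{colim}D_\RV)(g)$ as the unique $1$-Lipschitz map making the squares over the inclusions $\RV_r(\bfomega_1)\hookrightarrow\RV(\bfomega_1)$ commute, and both invoke Proposition \ref{RN2.11} to see that $f\mapsto\mathbb{E}[f\mid g]$ satisfies this condition. The only difference is that you spell out details the paper leaves implicit (that $\mathbb{E}[f\mid g]$ lands in $\RV_r(\bfomega_2)$ for $f\in\RV_r(\bfomega_1)$, and that $f\mapsto\mathbb{E}[f\mid g]$ is genuinely $1$-Lipschitz on all of $\RV(\bfomega_1)$ via the conditional contractivity estimate), which is a sound and welcome elaboration rather than a different argument.
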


\begin{proof}
    The map $(\mathrm{colim}D_\RV)(g)$ is the unique map $\RV(\bfomega_1)\to \RV(\bfomega_2)$ such that the following diagram commutes for every natural number $n\geq 1$: \[\begin{tikzcd}
	{\mathrm{RV}(\mathbf{\Omega}_1)} && {\mathrm{RV}(\mathbf{\Omega}_1)} \\
	{\mathrm{RV}_n(\mathbf{\Omega}_1)} && {\mathrm{RV}_n(\mathbf{\Omega}_1)}
	\arrow["{\mathrm{RV}_n(g)}"', from=2-1, to=2-3]
	\arrow["{(\mathrm{colim}D_\mathrm{RV})(g)}", from=1-1, to=1-3]
	\arrow[from=2-1, to=1-1]
	\arrow[from=2-3, to=1-3]
\end{tikzcd}\]
From Proposition \ref{RN2.11} it follows that the $1$-Lipschitz map $\RV(\bfomega_1)\to \RV(\bfomega_2):f\mapsto \mathbb{E}[f\mid g]$ satisfies this condition and therefore it has the be equal to $(\mathrm{colim}D_\RV)(g)$.
\end{proof}

Proposition \ref{RN2.13} and Proposition \ref{RN2.14} tell us that the functor $\mathrm{colim}D_\RV$ describes random variables. Therefore we will from now on use the notation $$\RV:=\mathrm{colim}D_\RV.$$
\subsection{The Radon-Nikodym theorem}\label{RNSection2.4}

We will now conclude section \ref{RNSection2} by giving a categorical proof of the Radon-Nikodym theorem. We will first look at a weaker bounded version (Theorem \ref{RN2.16}) and then extend this to the general version (Theorem \ref{RN2.18}). In Proposition \ref{RN2.15} and Proposition \ref{RN2.17}, we give the concrete construction of the correspondence between random variables and measures that we obtain from the categorical proofs.
%We will start by showing the Radon-Nikodym theorem for finite probability spaces. This proof for this is very elementary. We then will (Kan) extend the Radon-Nikodym theorem for finite probability spaces to a bounded Radon-Nikodym theorem for general probability spaces.

%Let $n\geq 1$ be a natural number and let $F_n^f$ and $G_n^f$ be as before. Let $(A,p)$ be a finite probability space. Let $\rho^f_A:G_n^f(A,p)\to F_n^f(A,p)$ be the map defined by the assignment \[g \mapsto (g(a)p_a)_a.\]
%\begin{prop}\label{finrn}
%The maps $(\rho^f_A)_{(A,p)}$ form a natural isomorphism $\rho^f:G_n^f\to F_n^f$.
%\end{prop}
%\begin{proof}
%It is easy to see that $\rho_A^f$ is well-defined an invertible for every finite probability space $(A,p)$. If follows now by Proposition $4.2$ in \cite{levin} that this is an isomorphism of metric spaces. It is straightforward to check the naturality of $\rho^f$.
%\end{proof}

For the weaker bounded version of the Radon-Nikodym theorem, we will use the functors $\M_r$ and $\RV_r$ as defined in section \ref{RNSection2.2} and section \ref{RNSection2.3}. 

Recall that we have a natural transformation $\rho_r^f:\RV_r^f\to \M_r^f$. This induces a natural transformation $\mathrm{Ran}_i\rho_r^f:\mathrm{Ran}_i\RV_r^f\to \mathrm{Ran}_i\M_r^f$. This is a natural transformation $\RV_r\to \M_r$, which we will denote by $\rho_r$. In the following proposition, we will describe this natural transformation.

\begin{prop}\label{RN2.15}
Let $\bfomega$ be a probability space. For $g\in \RV_r(\bfomega)$ and $B$ a measurable subset of $\bfomega$, then
\[(\rho_r)_\bfomega(g)(B)=\mathbb{E}[g1_B].\]
\end{prop}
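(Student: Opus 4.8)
The plan is to pin down $(\rho_r)_\bfomega$ via the universal property of the right Kan extension and then read off the value of the resulting measure on $B$ by probing the limit cone at a single, well-chosen finite quotient. Since $\rho_r = \mathrm{Ran}_i\rho_r^f$, the component $(\rho_r)_\bfomega:\RV_r(\bfomega)\to \M_r(\bfomega)$ is the unique $1$-Lipschitz map for which, for every measure-preserving map $f:\bfomega\to (A,p)$ into a finite probability space, the square
\[\begin{tikzcd}
	{\RV_r(\bfomega)} && {\M_r(\bfomega)} \\
	{\RV_r^f(A,p)} && {\M_r^f(A,p)}
	\arrow["{(\rho_r)_\bfomega}", from=1-1, to=1-3]
	\arrow["{p_f}"', from=1-1, to=2-1]
	\arrow["{p_f}", from=1-3, to=2-3]
	\arrow["{(\rho_r^f)_A}"', from=2-1, to=2-3]
\end{tikzcd}\]
commutes, where the vertical maps are the projections of the limit cones from Theorem \ref{RN2.10} (on the left) and Theorem \ref{RN2.6} (on the right).

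To evaluate the measure $(\rho_r)_\bfomega(g)$ on $B$, I would recall from the proof of Theorem \ref{RN2.6} that a measure $\mu\in \M_r(\bfomega)$ is recovered from its cone data by $\mu(B)=p_{1_B}(\mu)(1)$, using the measure-preserving map $1_B:\bfomega\to (\textbf{2},p_B)$ with $p_B(1):=\mathbb{P}(B)$. Writing $\mu:=(\rho_r)_\bfomega(g)$ and instantiating the commuting square at $f=1_B$ then gives
\[(\rho_r)_\bfomega(g)(B)=p_{1_B}\big((\rho_r)_\bfomega(g)\big)(1)=(\rho_r^f)_{\textbf{2}}\big(p_{1_B}(g)\big)(1).\]
It remains to compute the right-hand side from the explicit formulas. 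By the definition of the $\RV_r$-projection (Theorem \ref{RN2.10}), the random variable $p_{1_B}(g)$ on $(\textbf{2},p_B)$ satisfies $p_{1_B}(g)(1)=\frac{1}{\mathbb{P}(B)}\int_B g\,\mathrm{d}\mathbb{P}$ when $\mathbb{P}(B)\neq 0$. Since $(\rho_r^f)_{\textbf{2}}$ multiplies the value at $1$ by $p_B(1)=\mathbb{P}(B)$, the two factors of $\mathbb{P}(B)$ cancel and we obtain $(\rho_r)_\bfomega(g)(B)=\int_B g\,\mathrm{d}\mathbb{P}=\mathbb{E}[g1_B]$. The degenerate case $\mathbb{P}(B)=0$ is immediate, as both the projection value and $\mathbb{E}[g1_B]$ then vanish.

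I expect the only real subtlety to be bookkeeping rather than difficulty: one must correctly identify how $\mathrm{Ran}_i\rho_r^f$ acts through the naturality squares against the two distinct families of projection maps (both written $p_f$), and remember that the measure $(\rho_r)_\bfomega(g)$ is reconstructed from the cone by testing against the two-point quotients $(\textbf{2},p_B)$. Once the right test object is chosen, the computation collapses to a single cancellation, so no measure-theoretic input beyond the descriptions already established in Theorems \ref{RN2.6} and \ref{RN2.10} is needed.
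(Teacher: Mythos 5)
Your proposal is correct and follows essentially the same route as the paper's own proof: both characterize $(\rho_r)_\bfomega$ via the universal property of $\mathrm{Ran}_i\rho_r^f$ and then instantiate the commuting square at the two-point quotient $1_B:\bfomega\to(\textbf{2},p_B)$, after which the value $\mathbb{E}[g1_B]$ falls out of the explicit formulas for the projections and $(\rho_r^f)_{\textbf{2}}$. Your explicit treatment of the degenerate case $\mathbb{P}(B)=0$ is a minor point the paper leaves implicit, but it does not constitute a different argument.
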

\begin{proof}
Since $\rho_r$ is defined as $\mathrm{Ran}_i\rho_r^f$, the map $(\rho_r)_\bfomega$ is the unique map $\RV_r(\bfomega)\to \M_r(\bfomega)$ such that the following diagram commutes for every measure preserving map $h:\bfomega\to (A,p)$, where $(A,p)$ is some finite probability space.
\[\begin{tikzcd}
	{\RV_r(\bfomega)} && {\M_r(\bfomega)} \\
	{\RV_r^f(A,p)} && {\M_r^f(A,p)}
	\arrow["{(\rho_r)_\bfomega}"{description}, from=1-1, to=1-3]
	\arrow["{p^\RV_h}"', from=1-1, to=2-1]
	\arrow["{p_h^\M}", from=1-3, to=2-3]
	\arrow["{\left(\rho_r^f\right)_A}"{description}, from=2-1, to=2-3]
\end{tikzcd}\]
Here $p^\RV_h$ and $p^\M_h$ are the projection maps defined in the proofs of Theorem \ref{RN2.6} and Theorem \ref{RN2.10}.

Let $\textbf{2}:=\{0,1\}$ and let $r$ be the probability measure on $\textbf{2}$ defined by $r_1:=\mathbb{P}(B)$. We have a measure preserving map \[h:\bfomega\to (\textbf{2},r)\]
defined by the assignment \[h(x):=\begin{cases} 1 \text{ if }x\in B\\ 0 \text{ otherwise}. \end{cases}\]
The commutative diagram for this measure preserving map gives us 
\[(\rho_r)_\bfomega(g)(B)= p_h^\M\circ (\rho_r)_\bfomega(g)_1 = (\rho_r^f)_A\circ p_h^\RV(g)_1 = \mathbb{E}[g1_{h^{-1}(1)}]= \mathbb{E}[g1_B].\]
\end{proof}
Because $\rho_r^f$ is an isomorphism, so is $\rho_r$. This gives us the bounded Radon-Nikodym theorem. 
\begin{thm}[Bounded Radon-Nikodym]\label{RN2.16}
The natural transformation $\rho_r:\RV_r\to \M_r$ is an isomorphism.
\end{thm}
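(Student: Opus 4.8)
The plan is to deduce this purely formally from the fact that $\rho_r^f$ is an isomorphism (Proposition \ref{RN2.5}), using the functoriality of right Kan extension. The key observation is that $\mathrm{Ran}_i(-)$, restricted to those functors $\textbf{Prob}_f\to\mathbf{CMet}$ that admit a right Kan extension along $i$, is a functor into $[\textbf{Prob},\mathbf{CMet}]$, and every functor preserves isomorphisms. We already know from Theorem \ref{RN2.6} and Theorem \ref{RN2.10} that both $\M_r^f$ and $\RV_r^f$ admit (pointwise) right Kan extensions along $i$, so this reasoning applies to the isomorphism $\rho_r^f:\RV_r^f\to\M_r^f$ and yields at once that $\rho_r=\mathrm{Ran}_i\rho_r^f$ is an isomorphism.

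To make the functoriality explicit, I would recall the universal property defining $\mathrm{Ran}_i$. Write $\epsilon^{\RV}:(\mathrm{Ran}_i\RV_r^f)\circ i\Rightarrow \RV_r^f$ and $\epsilon^{\M}:(\mathrm{Ran}_i\M_r^f)\circ i\Rightarrow \M_r^f$ for the universal counits. Given any $\alpha:\RV_r^f\Rightarrow\M_r^f$, the composite $\alpha\circ\epsilon^{\RV}:(\mathrm{Ran}_i\RV_r^f)\circ i\Rightarrow \M_r^f$ factors uniquely through $\epsilon^{\M}$, and this factorization is by definition $\mathrm{Ran}_i\alpha$. Uniqueness in the universal property immediately gives $\mathrm{Ran}_i(\beta\circ\alpha)=\mathrm{Ran}_i\beta\circ\mathrm{Ran}_i\alpha$ and $\mathrm{Ran}_i(\mathrm{id})=\mathrm{id}$, i.e.\ $\mathrm{Ran}_i$ is functorial on the relevant subcategory. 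Applying this to $\alpha=\rho_r^f$ and $\beta=(\rho_r^f)^{-1}$ (which exists by Proposition \ref{RN2.5}) shows that $\mathrm{Ran}_i((\rho_r^f)^{-1})$ is a two-sided inverse for $\rho_r$.

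Alternatively, and perhaps more concretely given the pointwise descriptions already obtained, I would argue objectwise. For a fixed probability space $\bfomega$, Theorems \ref{RN2.6} and \ref{RN2.10} compute $\M_r(\bfomega)$ and $\RV_r(\bfomega)$ as limits $\lim D_\bfomega^{\M}$ and $\lim D_\bfomega^{\RV}$ of the diagrams obtained by composing $U:\bfomega\downarrow i\to\textbf{Prob}_f$ with $\M_r^f$ and $\RV_r^f$ respectively. The natural isomorphism $\rho_r^f$ induces a levelwise isomorphism between these two diagrams, hence an isomorphism of their limit cones; unwinding the construction in Proposition \ref{RN2.15}, this limit isomorphism is exactly $(\rho_r)_\bfomega$. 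Naturality in $\bfomega$ is then inherited from the naturality of $\rho_r^f$.

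I do not expect any genuine obstacle here: the entire content is the standard $2$-categorical fact that right Kan extension preserves isomorphisms, together with the existence statements already proved in Theorems \ref{RN2.6} and \ref{RN2.10}. The only point requiring a word of care is that $\mathrm{Ran}_i$ need not be defined on all of $[\textbf{Prob}_f,\mathbf{CMet}]$, since $\mathbf{CMet}$ lacks some limits, so functoriality should be invoked only on the subcategory of functors admitting right Kan extensions along $i$ — which safely contains both $\RV_r^f$ and $\M_r^f$.
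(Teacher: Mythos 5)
Your proposal is correct and follows essentially the same route as the paper: the paper's proof likewise combines Proposition \ref{RN2.5} with Theorems \ref{RN2.6} and \ref{RN2.10} and invokes the functoriality of right Kan extension to conclude that $\rho_r=\mathrm{Ran}_i\rho_r^f$ is an isomorphism. Your explicit verification of that functoriality via the universal counits, and your caveat that $\mathrm{Ran}_i$ is only invoked on the subcategory of functors admitting right Kan extensions along $i$, simply spell out details the paper leaves implicit.
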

\begin{proof}
By Proposition \ref{RN2.5}, we know that $\RV_r^f\xrightarrow{\rho_r^f}\M_r^f$ is an isomorphism. Using Theorem \ref{RN2.6} and Theorem \ref{RN2.10} and the fact that the Kan extension is functorial, we conclude that \[\RV_r \xrightarrow{\text{Ran}_i\rho_r^f}\M_r\] is a natural isomorphism.
\[\begin{tikzcd}
	{\textbf{Prob}_f} && {\mathbf{CMet}} \\
	\\
	& {\textbf{Prob}}
	\arrow[""{name=0, anchor=center, inner sep=0}, "{\M_r^f}", curve={height=-6pt}, from=1-1, to=1-3]
	\arrow[""{name=1, anchor=center, inner sep=0}, "{\RV_r^f}"', curve={height=6pt}, from=1-1, to=1-3]
	\arrow["i"{description}, from=1-1, to=3-2]
	\arrow[""{name=2, anchor=center, inner sep=0}, "{\M_r}", curve={height=-6pt}, from=3-2, to=1-3]
	\arrow[""{name=3, anchor=center, inner sep=0}, "{\RV_r}"', curve={height=6pt}, from=3-2, to=1-3]
	\arrow["\simeq"{description}, Rightarrow, draw=none, from=1, to=0]
	\arrow["\simeq"{description}, Rightarrow, draw=none, from=2, to=3]
\end{tikzcd}\]
\end{proof}

The natural transformations $\rho_r:\RV_r\to \M_r$ for every $r>0$, induce a morphism of diagrams $\tilde{\rho}:D_\RV\to D_\M$. Therefore we obtain a natural transformation $\mathrm{colim}\Tilde{\rho}:\mathrm{colim}D_\RV\to \mathrm{colim}D_\M$. This is a natural transformation $\RV\to \M$, which we will denote by $\rho$. The following proposition describes this natural transformation.

\begin{prop}\label{RN2.17}
Let $\bfomega$ be a probability space, then $\rho_\bfomega$ is the map $\RV(\bfomega)\to \M(\bfomega)$ defined by the assignment \[f\mapsto \int_{(-)}f\text{d}\mathbb{P}.\]
\end{prop}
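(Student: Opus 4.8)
The plan is to identify $\rho_\bfomega$ through the universal property that defines it as a colimit, exactly as in the proofs of Proposition~\ref{RN2.9} and Proposition~\ref{RN2.14}. Since $\rho=\mathrm{colim}\tilde\rho$, its component $\rho_\bfomega$ is the unique $1$-Lipschitz map $\RV(\bfomega)\to\M(\bfomega)$ for which the square
\[\begin{tikzcd}
	{\RV(\bfomega)} && {\M(\bfomega)} \\
	{\RV_r(\bfomega)} && {\M_r(\bfomega)}
	\arrow["{\rho_\bfomega}", from=1-1, to=1-3]
	\arrow["{(\rho_r)_\bfomega}"', from=2-1, to=2-3]
	\arrow[from=2-1, to=1-1]
	\arrow[from=2-3, to=1-3]
\end{tikzcd}\]
commutes for every natural number $r>0$, where the vertical maps are the canonical inclusions into the colimits. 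Thus it suffices to exhibit a $1$-Lipschitz map satisfying this compatibility and then invoke uniqueness.

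First I would take as candidate the map $\varphi:\RV(\bfomega)\to\M(\bfomega)$, $f\mapsto\int_{(-)}f\,\text{d}\mathbb{P}$, which was already shown to be well-defined (landing in $\M(\bfomega)$) and $1$-Lipschitz in the discussion preceding Theorem~\ref{RN2.3}. It then remains to verify the compatibility square. For $g\in\RV_r(\bfomega)$ and any measurable subset $B$ of $\bfomega$, Proposition~\ref{RN2.15} gives $(\rho_r)_\bfomega(g)(B)=\mathbb{E}[g1_B]=\int_B g\,\text{d}\mathbb{P}=\varphi(g)(B)$; since this holds for all $B$, the image of $(\rho_r)_\bfomega(g)$ under the inclusion $\M_r(\bfomega)\hookrightarrow\M(\bfomega)$ coincides with $\varphi(g)$, so the square commutes. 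By uniqueness in the universal property we conclude $\rho_\bfomega=\varphi$.

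The only point needing care is that uniqueness of the map out of the colimit genuinely pins down $\varphi$, and this is where the density statements from Proposition~\ref{RN2.13} and Proposition~\ref{RN2.8} enter: the union of the subspaces $\RV_r(\bfomega)$ is dense in $\RV(\bfomega)$, so a $1$-Lipschitz map on $\RV(\bfomega)$ is determined by its restrictions to the $\RV_r(\bfomega)$, and both $\rho_\bfomega$ and $\varphi$ restrict to $(\rho_r)_\bfomega$ there. I do not expect any genuine obstacle, since the substantive computation—that the bounded-level map sends $g$ to $\mathbb{E}[g1_{(-)}]$—was already carried out in Proposition~\ref{RN2.15}; the present argument is purely a matter of assembling the colimit from its bounded pieces.
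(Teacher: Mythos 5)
Your proof is correct and follows essentially the same route as the paper's: both identify $\rho_\bfomega$ via the universal property of the colimit, check that the candidate map $f\mapsto\int_{(-)}f\,\text{d}\mathbb{P}$ makes the squares over the inclusions $\RV_r(\bfomega)\to\RV(\bfomega)$ and $\M_r(\bfomega)\to\M(\bfomega)$ commute (which is exactly the content of Proposition~\ref{RN2.15}), and conclude by uniqueness. Your added remark on density of $\bigcup_r\RV_r(\bfomega)$ merely makes explicit the uniqueness the paper leaves implicit in the colimit's universal property.
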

\begin{proof}
The map $\rho_\bfomega$ is the unique map that makes the following diagram commute for every $r>0$.
\[\begin{tikzcd}
	{\RV(\bfomega)} && {\M(\bfomega)} \\
	{\RV_r(\bfomega)} && {\M_r(\bfomega)}
	\arrow["{(\rho_r)_\bfomega}", from=2-1, to=2-3]
	\arrow["{(\text{colim}\rho)_\bfomega}", from=1-1, to=1-3]
	\arrow["{i_{r,\bfomega}}", from=2-1, to=1-1]
	\arrow["{j_{r,\bfomega}}"', from=2-3, to=1-3]
\end{tikzcd}\]
Where $i_{r,\bfomega}$ and $j_{r,\bfomega}$ are the inclusion maps. We have that for all $r>0$, \[\int_{(-)}i_{r,\bfomega}(f)\text{d}\mathbb{P}=j_{r,\bfomega}\rho_r(f)\]
for all $f\in \RV_r(\bfomega)$. The claim now follows.
\end{proof}

We are now ready to complete the categorical proof for the Radon-Nikodym theorem.
\begin{thm}[Radon-Nikodym]\label{RN2.18}
The natural transformation $\rho:\RV\to \M$ is an isomorhpism.
\end{thm}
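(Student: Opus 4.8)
The plan is to deduce Theorem \ref{RN2.18} purely formally from the bounded version already established in Theorem \ref{RN2.16}, using only that a colimit is a functor and functors preserve isomorphisms. Recall that $\rho$ was defined as $\mathrm{colim}\,\tilde{\rho}$, where $\tilde{\rho}:D_\RV\to D_\M$ is the morphism of diagrams in $[(0,\infty),[\textbf{Prob},\mathbf{CMet}]]$ whose component at $r$ is $\rho_r:\RV_r\to\M_r$. So it suffices to show that $\tilde{\rho}$ is an isomorphism in this diagram category and then apply the functor $\mathrm{colim}$.

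First I would observe that by Theorem \ref{RN2.16} every component $\rho_r$ is an isomorphism. A natural transformation of diagrams whose components are all invertible is itself invertible: the levelwise inverses $\rho_r^{-1}:\M_r\to\RV_r$ automatically assemble into a morphism of diagrams $D_\M\to D_\RV$. Indeed, for $r\le s$ the transition maps of $D_\RV$ and $D_\M$ are the inclusions, and $\tilde{\rho}$ already commutes with them; conjugating each naturality square by the inverses shows that the $\rho_r^{-1}$ also commute with the inclusions, so $\tilde{\rho}^{-1}=(\rho_r^{-1})_r$ is a two-sided inverse of $\tilde{\rho}$. I would then invoke functoriality of $\mathrm{colim}:[(0,\infty),[\textbf{Prob},\mathbf{CMet}]]\to[\textbf{Prob},\mathbf{CMet}]$: it sends the isomorphism $\tilde{\rho}$ to the isomorphism $\rho=\mathrm{colim}\,\tilde{\rho}:\RV\to\M$, which is exactly the claim.

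As a concrete reading of the statement, one may note via Proposition \ref{RN2.17} that $\rho_\bfomega$ is the map $f\mapsto \int_{(-)}f\,\text{d}\mathbb{P}$, so Theorem \ref{RN2.18} recovers the classical Radon-Nikodym theorem (Theorem \ref{RN2.3}), with $\rho_\bfomega^{-1}$ sending an absolutely continuous measure to its Radon-Nikodym derivative. I do not expect a genuine analytic obstacle here: all the measure theory has already been absorbed into Theorems \ref{RN2.6} and \ref{RN2.10} and the Riesz-Fischer theorem (Theorem \ref{RN2.1}) used in their proofs, and the passage to the colimit is purely formal. The only point deserving a word of care is that the relevant colimits in $[\textbf{Prob},\mathbf{CMet}]$ exist and are computed pointwise, so that $\mathrm{colim}$ is a well-defined functor to which the preservation-of-isomorphisms principle applies; this is guaranteed by the density and completeness arguments underlying Propositions \ref{RN2.13} and the analogue for $D_\M$, after which the argument reduces to the standard fact that a levelwise-invertible natural transformation is invertible in the diagram category.
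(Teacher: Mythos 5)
Your proposal is correct and matches the paper's proof of Theorem \ref{RN2.18} exactly: both deduce from Theorem \ref{RN2.16} that the levelwise-invertible morphism of diagrams $\tilde{\rho}:D_\RV\to D_\M$ is an isomorphism and then apply functoriality of $\mathrm{colim}$. Your additional remarks (that the inverses assemble into a morphism of diagrams, and that the colimits exist by Propositions \ref{RN2.13} and \ref{RN2.8}) simply make explicit what the paper leaves implicit.
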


\begin{proof}
Because $\rho_r:\RV_r\to \M_r$ is an isomorphism for every $r>0$, so is $\tilde{\rho}:D_\RV\to D_\M$. We find that $\rho:=\textup{colim}\tilde{\rho}:\RV\to \M$ is a natural isomorphism.

\[\begin{tikzcd}
	\ldots & {\RV_1} & {\ldots} & {\RV_3} & \cdots & \RV \\
	\ldots & {\M_1} & {\ldots} & {\M_3} & \cdots & \M
	\arrow[from=1-1, to=1-2]
	\arrow[from=1-2, to=1-3]
	\arrow[from=1-3, to=1-4]
	\arrow[from=1-4, to=1-5]
 \arrow[from=1-5, to=1-6]
	\arrow[from=2-1, to=2-2]
	\arrow[from=2-2, to=2-3]
	\arrow[from=2-3, to=2-4]
	\arrow[from=2-4, to=2-5]
 \arrow[from=2-5, to=2-6]
	\arrow["\simeq"{description}, from=1-2, to=2-2]
	\arrow["\simeq"{description}, from=1-4, to=2-4]
	\arrow["\simeq"{description}, from=1-6, to=2-6]
\end{tikzcd}\]
\end{proof}

%\begin{prop}\label{condexp2}
%Let $f:(X,\Sigma_X,\mathbb{P}_X)\to (Y,\Sigma_Y,\mathbb{P}_Y)$ be a measure preserving map of probability spaces. For $g\in G(X,\Sigma_X,\mathbb{P}_X)$,
%\[\int_{f^{-1}(-)}g\text{d}\mathbb{P}_X = \int_{(-)} (Gf)(g)\text{d}\mathbb{P}_Y.\]
%\end{prop}
%\begin{proof}
%This follows immediately from the following commutative diagram together with Proposition \ref{Fmorph}.
%\[\begin{tikzcd}
%	{F(X,\Sigma_X,\mathbb{P})} && {F(Y,\Sigma,\mathbb{P}_Y)} \\
%	{G(X,\Sigma_X,\mathbb{P}_X)} && {G(Y,\Sigma_Y,\mathbb{P}_Y)}
%	\arrow["Ff"{description}, from=1-1, to=1-3]
%	\arrow["Gf"{description}, from=2-1, to=2-3]
%	\arrow["{\rho_X}", from=2-1, to=1-1]
%	\arrow["{\rho_Y}"', from=2-3, to=1-3]
%\end{tikzcd}\]
%\end{proof}
%In the same way as explained in Example \ref{condexpeg}, Proposition \ref{condexp2} motivates the use of the notation $\mathbb{E}[g\mid f]$ and the name \textbf{conditional expecation fo $g$ with respect to $f$}.
\section{The martingale convergence theorem} \label{RNSection3}
In this section we will focus on a special class of stochastic processes, namely \emph{martingales}. These stochastic processes have nice convergence properties, of which we will prove one categorically later in this section. Important examples of martingales are Brownian motion and unbiased random walks.

Let $\mathbf{\Omega}:=(\Omega,\mathcal{F},\mathbb{P})$ be a probability space and let $I$ be a directed poset. A \textbf{filtration} is an indexed collection $(\mathcal{F}_i)_{i\in I}$ of $\sigma$-subalgebras of $\mathcal{F}$ such that $\mathcal{F}_i\subseteq \mathcal{F}_j$ for $i\leq j$ and such that $$\sigma\left(\bigcup_{i\in I}\mathcal{F}_i\right)=\mathcal{F}.$$  We say that $(\Omega,\mathcal{F},\left( \mathcal{F}_i\right)_{i\in I}, \mathbb{P})$ is a \textbf{filtered probability space}. The probability space $(\Omega,\mathcal{F}_i,\mathbb{P}\mid_{\mathcal{F}_i})$ is denoted by $\mathbf{\Omega}_i$. For $i\leq j$ in $I$, there is a measure preserving map $f_{ij}:\bfomega_j\to \bfomega_j$ and for every $i\in I$  there is a measure-preserving map $f_i:\mathbf{\Omega}\to \mathbf{\Omega}_i$.

An indexed collection $(X_i)_{i\in I}$ of random variables such that $X_i\in \RV(\bfomega_i)$ is called a \textbf{martingale} if $$\mathbb{E}[X_j\mid f_{ij}] = X_i$$ for all $i\leq j$ in $I$. 

Martingales often have nice convergence properties. We will categorically prove a weaker version of the following \emph{martingale convergence theorems} in section \ref{RNSection3.4}.

\begin{thm}[Doob's $L^1$ martingale convergence theorem]\label{RN3.1}
Let $(X_n)_{n=1}^\infty$ be a martingale such that $$\lim_{\lambda \to \infty}\sup_n\mathbb{E}[X_n1_{\{X_n>\lambda \}}] = 0,$$
then $(X_n)_n$ converges to a random variable $X$ in $L^1$-norm and  for all $n\geq 1$, $$\mathbb{E}[X\mid f_n]=X_n.$$
\end{thm}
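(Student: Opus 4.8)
The plan is the classical three-step argument. Write $\bar X_n:=X_n\circ f_n\in\RV(\bfomega)$ for the pullback of $X_n$ to the ambient space; then $(\bar X_n)_n$ is a martingale for the filtration $(\mathcal F_n)_n$ on $\bfomega$ in the usual sense, and the convergence asserted in the theorem is that $\bar X_n\to X$ in $\RV(\bfomega)$. I first record that the martingale is automatically $L^1$-bounded: taking expectations in the relation $\mathbb E[X_j\mid f_{ij}]=X_i$ gives $\mathbb E[X_n]=\mathbb E[X_1]$ for all $n$, and since each $X_n$ is $[0,\infty)$-valued we get $\sup_n\mathbb E\lvert\bar X_n\rvert=\mathbb E[X_1]<\infty$. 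The three steps are then: (i) almost-sure convergence $\bar X_n\to X$; (ii) promotion to $L^1$-convergence using the tail hypothesis; and (iii) the closure identity $\mathbb E[X\mid f_n]=X_n$.

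For step (i) I would invoke Doob's upcrossing inequality. Fix rationals $0\le a<b$ and let $U_N[a,b]$ be the number of upcrossings of $[a,b]$ by $\bar X_1,\dots,\bar X_N$. The upcrossing inequality gives
\[(b-a)\,\mathbb E\bigl[U_N[a,b]\bigr]\le \mathbb E\bigl[(\bar X_N-a)\vee 0\bigr]\le \mathbb E[\bar X_N]=\mathbb E[X_1],\]
so by monotone convergence $\mathbb E[U_\infty[a,b]]<\infty$ and hence $U_\infty[a,b]<\infty$ almost surely. Taking a countable union over all rational pairs $a<b$ shows that the event $\{\liminf_n\bar X_n<\limsup_n\bar X_n\}$ is null, so $\bar X_n$ converges almost surely to some $X$ valued in $[0,\infty]$; Fatou's lemma then gives $\mathbb E[X]\le\liminf_n\mathbb E[\bar X_n]=\mathbb E[X_1]<\infty$, so $X$ is finite almost surely and defines an element of $\RV(\bfomega)$. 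The proof of the upcrossing inequality is the genuinely hard input, and is exactly where stopping times and the optional stopping theorem enter, through the martingale transform of $(\bar X_n)_n$ by the predictable buy-low/sell-high strategy.

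For step (ii), pushing forward along $f_n$ shows $\mathbb E[\bar X_n1_{\{\bar X_n>\lambda\}}]=\mathbb E[X_n1_{\{X_n>\lambda\}}]$, so the hypothesis $\lim_{\lambda\to\infty}\sup_n\mathbb E[X_n1_{\{X_n>\lambda\}}]=0$ together with the uniform bound from step one is precisely uniform integrability of $(\bar X_n)_n$. Truncating at level $\lambda$,
\[d_{L^1}(\bar X_n,X)\le \mathbb E\bigl\lvert(\bar X_n\wedge\lambda)-(X\wedge\lambda)\bigr\rvert+\mathbb E\bigl[\bar X_n1_{\{\bar X_n>\lambda\}}\bigr]+\mathbb E\bigl[X1_{\{X>\lambda\}}\bigr].\]
For fixed $\lambda$ the first term tends to $0$ as $n\to\infty$ by bounded convergence applied to the almost-sure limit, the second is uniformly small in $n$ by the hypothesis, and the third is small for large $\lambda$ since $X\in\RV(\bfomega)$. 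Letting $n\to\infty$ and then $\lambda\to\infty$ yields $\bar X_n\to X$ in $\RV(\bfomega)$.

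Finally, for step (iii), fix $n$ and take $m\ge n$. Since conditional expectation is the action of $\RV$ on morphisms (Proposition \ref{RN2.11}), functoriality along $f_n=f_{nm}\circ f_m$ gives
\[\mathbb E[\bar X_m\mid f_n]=\mathbb E\bigl[\mathbb E[\bar X_m\mid f_m]\mid f_{nm}\bigr]=\mathbb E[X_m\mid f_{nm}]=X_n,\]
using $\mathbb E[X_m\circ f_m\mid f_m]=X_m$ and the martingale property. As $m\to\infty$ we have $\bar X_m\to X$ in $\RV(\bfomega)$ by step (ii), and since $\mathbb E[\,\cdot\mid f_n]=\RV(f_n)$ is $1$-Lipschitz, $\mathbb E[X\mid f_n]=\lim_m\mathbb E[\bar X_m\mid f_n]=X_n$. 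I expect step (i) to be the main obstacle: almost-sure convergence rests on the upcrossing inequality and the stopping-time machinery flagged in the introduction, whereas steps (ii) and (iii) are routine once uniform integrability and the $1$-Lipschitz property of conditional expectation are in hand.
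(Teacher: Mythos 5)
Your proof is correct, but note that the paper never actually proves Theorem \ref{RN3.1}: it is quoted as classical background, with the proof attributed to section XI.14 of Doob's book, and the introduction explicitly names the route you took --- stopping times, the optional stopping theorem, and upcrossing lemmas --- as the classical argument the paper deliberately does \emph{not} reproduce. What the paper proves instead is the weaker Theorem \ref{RN3.17}, for martingales uniformly bounded by $r$, and by entirely different means: $\RV_r$ is exhibited as an enriched right Kan extension (Proposition \ref{RN3.13}), Theorem \ref{RN3.16} then shows it preserves the cofiltered limit $\bfomega=\lim_i\bfomega_i$, and the limit of $\RV_r D_\bfomega$ is identified with the set of bounded martingales. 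The closest the paper comes to an analytic convergence argument is inside the proof of Theorem \ref{RN2.10} (see Remark \ref{RN2.12}), where the martingale-like net $(s^y_f)_f$ is shown to be Cauchy via the $L^2$ orthogonality identity $\mathbb{E}[(s_f^y-s_g^y)^2]=\mathbb{E}[(s_f^y)^2]-\mathbb{E}[(s_g^y)^2]$ together with monotone boundedness of the second moments, and then Riesz--Fischer (Theorem \ref{RN2.1}) --- no upcrossings anywhere. So your steps (i) and (ii) are genuinely different from anything in the paper: they deliver the full uniformly-integrable statement, at the price of invoking Doob's upcrossing inequality as a black box, which is exactly where all the stopping-time machinery hides (as you acknowledge); the paper's route is elementary, isometric, and categorical, but only reaches the bounded case and cannot absorb the tail hypothesis of Theorem \ref{RN3.1}. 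Your step (iii), by contrast, coincides with how the paper itself would argue: it is precisely functoriality of conditional expectation (Propositions \ref{RN2.11} and \ref{RN2.14}) combined with its $1$-Lipschitz property. Two small remarks: the hypothesis alone already gives $L^1$-boundedness via $\mathbb{E}[X_n]\leq\lambda+\mathbb{E}[X_n 1_{\{X_n>\lambda\}}]$, so you do not need the constant-expectation observation (though it is correct, given the paper's convention that random variables are $[0,\infty)$-valued); and your use of $(\bar X_N-a)\vee 0\leq \bar X_N$ silently uses $a\geq 0$, which that same convention justifies.
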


\begin{thm}[Doob's $L^p$ martingale convergence theorem]\label{RN3.2}
Let $p>1$ and let $(X_n)_{n=1}^\infty$ be a martingale such that $$\sup_n\mathbb{E}[X_n^p]<\infty,$$
then $(X_n)_n$ converges to a random variable $X$ in $L^p$-norm and  for all $n\geq 1$, $$\mathbb{E}[X\mid f_n]=X_n.$$
\end{thm}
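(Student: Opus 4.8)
The plan is to reinterpret the statement as a limit-preservation property of the functor $\RV$ and then to supply the single analytic ingredient — an $L^p$ Cauchy estimate — by adapting the argument in the proof of Theorem~\ref{RN2.10}. First I would organise the data categorically. The filtration yields a cofiltered diagram $\cdots\to\bfomega_2\to\bfomega_1$ in $\textbf{Prob}$ with connecting maps $f_{mn}\colon\bfomega_n\to\bfomega_m$ for $m\le n$, over which $\bfomega$ sits via the coarsening maps $f_n\colon\bfomega\to\bfomega_n$; the requirement $\sigma(\bigcup_n\mathcal{F}_n)=\mathcal{F}$ is exactly what makes $\bfomega$ the limit of this diagram. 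By Proposition~\ref{RN2.14} the functor sends each connecting map to a conditional expectation, $\RV(f_{mn})=\mathbb{E}[-\mid f_{mn}]$, so the martingale identity $\mathbb{E}[X_n\mid f_{mn}]=X_m$ is precisely $\RV(f_{mn})(X_n)=X_m$. Hence a martingale is the same thing as a compatible family, i.e. an element of the cofiltered limit $\lim_n\RV(\bfomega_n)$, and the theorem amounts to saying that this family lies in the image of the canonical comparison map $\RV(\bfomega)\to\lim_n\RV(\bfomega_n)$: a preimage $X\in\RV(\bfomega)$ is exactly a random variable with $\RV(f_n)(X)=X_n$, that is $\mathbb{E}[X\mid f_n]=X_n$.

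Since the comparison map is not surjective in general, the $L^p$ bound must be used to produce the preimage. Regarding each $X_n$ as an $\mathcal{F}_n$-measurable element of $\RV(\bfomega)$, I would, following Remark~\ref{RN2.12}, treat $(X_n)_n$ as a net in $\RV(\bfomega)$ and show it is $L^p$-Cauchy; completeness from the Riesz--Fischer theorem (Theorem~\ref{RN2.1}) then yields a limit $X\in\RV(\bfomega)$, continuity of the projections $\RV(f_n)$ forces $\mathbb{E}[X\mid f_n]=X_n$, and the Cauchy property is upgraded to convergence $X_n\to X$ in $L^p$. As a first step towards Cauchyness, conditional Jensen applied to the convex function $t\mapsto t^p$ gives $X_m^p=(\mathbb{E}[X_n\mid f_{mn}])^p\le\mathbb{E}[X_n^p\mid f_{mn}]$, hence $\mathbb{E}[X_m^p]\le\mathbb{E}[X_n^p]$ for $m\le n$; the net $\bigl(\mathbb{E}[X_n^p]\bigr)_n$ is therefore monotone and, by hypothesis, bounded, so it converges.

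The remaining and hardest step is the $L^p$ analogue of the orthogonality inequalities \eqref{1} and \eqref{2}. In Theorem~\ref{RN2.10} the relevant martingale is bounded and the estimate rests on the Hilbert-space identity $\mathbb{E}[(s_f^y-s_g^y)^2]=\mathbb{E}[(s_f^y)^2]-\mathbb{E}[(s_g^y)^2]$, valid only for $p=2$. For general $p>1$ I would replace this Pythagorean identity by the \emph{uniform convexity} of $L^p$, in the quantitative form of Clarkson's inequalities. For $p\ge 2$, Clarkson's inequality together with the contraction property $\lVert X_m\rVert_p=\bigl\lVert\mathbb{E}\bigl[\tfrac{X_n+X_m}{2}\bigm|f_{mn}\bigr]\bigr\rVert_p\le\bigl\lVert\tfrac{X_n+X_m}{2}\bigr\rVert_p$ yields the clean estimate
\[\lVert X_n-X_m\rVert_p^p\le 2^{p-1}\bigl(\mathbb{E}[X_n^p]-\mathbb{E}[X_m^p]\bigr),\]
whose right-hand side tends to $0$ because $\bigl(\mathbb{E}[X_n^p]\bigr)_n$ converges; the case $1<p<2$ is handled identically using the second Clarkson inequality. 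This uniform-convexity estimate is the crux of the argument and is exactly where $p>1$ is indispensable: it fails at $p=1$, where $L^1$ is not uniformly convex, which is precisely why the $L^1$ statement (Theorem~\ref{RN3.1}) needs the stronger uniform-integrability hypothesis in place of a mere norm bound. Once the estimate is established, the net is Cauchy, completeness of $\RV(\bfomega)$ delivers the limit $X$, and the three conclusions of the theorem follow as above.
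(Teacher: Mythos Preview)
The paper does not actually prove Theorem~\ref{RN3.2}: it is stated together with Theorem~\ref{RN3.1} as a classical result, with a reference to Doob's original work, and the paper explicitly says it will only prove a \emph{weaker version} categorically. That weaker version is Theorem~\ref{RN3.17}, the uniformly bounded case, obtained by showing that the enriched functor $\RV_r$ preserves the cofiltered limit $D_\bfomega$ (Theorem~\ref{RN3.16}). The paper's machinery, as developed, does not reach the full $L^p$ statement: the limit-preservation argument goes through $\RV_r$, not through an $L^p$-version of $\RV$, and the key analytic step in Theorem~\ref{RN2.10} is the $L^2$ Pythagorean identity of Lemma~\ref{appendix}, which is specific to bounded random variables viewed in $L^2$.

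Your proposal therefore goes beyond what the paper proves, and it does so in a sensible way. The categorical reformulation you give is exactly the one the paper uses for the bounded case, and your substitution of Clarkson's inequalities for the $p=2$ orthogonality identity is the right idea and is correctly executed: from $X_m=\mathbb{E}\bigl[\tfrac{X_n+X_m}{2}\,\big|\,f_{mn}\bigr]$ and the $L^p$-contractivity of conditional expectation one indeed gets $\lVert X_m\rVert_p\le\lVert\tfrac{X_n+X_m}{2}\rVert_p$, and combining this with the first Clarkson inequality for $p\ge2$ (respectively the second for $1<p<2$) together with the monotone convergence of $\mathbb{E}[X_n^p]$ yields the Cauchy property. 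Two small points to tighten: Theorem~\ref{RN2.1} as stated in the paper is $L^1$-completeness, so you should either invoke the general $L^p$ Riesz--Fischer theorem or note that $L^p$-Cauchy implies $L^1$-Cauchy on a probability space and then argue separately that the $L^1$-limit is also the $L^p$-limit; and the passage from $\RV(f_n)(X_m)=X_n$ for $m\ge n$ to $\mathbb{E}[X\mid f_n]=X_n$ uses $L^1$-continuity of $\RV(f_n)$, which is fine since $L^p$-convergence dominates $L^1$-convergence. With those caveats your argument is a correct proof of Theorem~\ref{RN3.2} that the paper itself does not supply.
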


To give a categorical proof, the setting from section \ref{RNSection2} does not quite work. We need to change everything from section \ref{RNSection2} to the \emph{enriched} setting. We will enrich everything over the closed monoidal category $\mathbf{CMet}$, which we will discuss in section \ref{RNSection3.1}. We then show in section \ref{RNSection3.2} and section \ref{RNSection3.3} that the results from section \ref{RNSection2} still work when everything is enriched over $\mathbf{CMet}$. We then conclude section \ref{RNSection3} by giving a categorical proof for a weaker version of the martingale convergence theorems in section \ref{RNSection3.4}.

\subsection{\texorpdfstring{The closed monoidal category $\mathbf{CMet}$}{The closed monoidal category CMet}}\label{RNSection3.1}
In this section we will give an overview of well-known results about metric spaces. For completeness, we give proofs for all the results.

Let $i:\mathbf{CMet}\to \textbf{Met}$ be the inclusion functor of the full subcategory of complete metric spaces in the category of metric spaces.

\begin{prop}\label{RN3.3}
The category $\mathbf{CMet}$ is complete and $i:\mathbf{CMet}\to \mathbf{Met}$ preserves these limits. 
\end{prop}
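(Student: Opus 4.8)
The plan is to construct all small limits in $\textbf{Met}$ explicitly and then observe that, when the diagram takes values in complete spaces, the resulting limit is again complete; since $\mathbf{CMet}$ is a full subcategory of $\textbf{Met}$, this simultaneously shows that $\mathbf{CMet}$ is complete and that $i$ preserves limits.

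First I would recall the construction of limits in $\textbf{Met}$. For a small diagram $F:J\to \textbf{Met}$, set
\[L:=\left\{(x_j)_{j}\in \prod_{j\in J}F(j)\ \middle|\ F(u)(x_j)=x_k \text{ for every } u:j\to k \text{ in } J\right\},\]
equipped with the sup-metric $d((x_j)_j,(y_j)_j):=\sup_{j}d_{F(j)}(x_j,y_j)$, which is well-defined because we allow values in $[0,\infty]$. The projections $L\to F(j)$ are $1$-Lipschitz, and a routine check shows that $L$ together with these projections is the limit cone: a cone with apex $Z$ is precisely a family of $1$-Lipschitz maps $Z\to F(j)$ compatible with the $F(u)$, which is the same datum as a single $1$-Lipschitz map $Z\to L$.

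The heart of the argument is to show that $L$ is complete whenever each $F(j)$ is, and I would do this in two steps. Step one: the product $\prod_j F(j)$ with the sup-metric is complete. Given a Cauchy sequence $(x^{(n)})_n$, each coordinate sequence $(x^{(n)}_j)_n$ is Cauchy in $F(j)$ since $d_{F(j)}(x^{(n)}_j,x^{(m)}_j)\le d(x^{(n)},x^{(m)})$, hence converges to some $x_j$ by completeness; fixing $n$ large and letting $m\to\infty$ in the inequality $\sup_j d_{F(j)}(x^{(n)}_j,x^{(m)}_j)<\epsilon$ shows that $(x^{(n)})_n$ converges to $(x_j)_j$ in the sup-metric. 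Step two: $L$ is a closed subset of the product. For each arrow $u:j\to k$ the function $(x_j)_j\mapsto d_{F(k)}(F(u)(x_j),x_k)$ is continuous, being built from $1$-Lipschitz maps and the metric, so its zero set is closed, and $L$ is the intersection of these closed sets. Since a closed subspace of a complete metric space is complete, $L$ is complete.

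Finally I would assemble the conclusion: the limit $L$ of any small diagram valued in complete spaces is itself an object of $\mathbf{CMet}$, and because $\mathbf{CMet}$ is full in $\textbf{Met}$ the universal property computed in $\textbf{Met}$ restricts verbatim to one in $\mathbf{CMet}$. Hence $\mathbf{CMet}$ has all small limits and the inclusion $i$ preserves them. I expect the only delicate points to be the bookkeeping with extended pseudometrics---ensuring the sup-metric remains well-defined and that the completeness argument tolerates infinite distances---rather than any genuine conceptual obstacle.
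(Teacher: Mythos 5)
Your proof is correct and is in the same spirit as the paper's, but the decomposition differs: you build an arbitrary small limit in one step, as the subspace $L$ of the sup-metric product cut out by the compatibility equations, and verify completeness of $L$ directly; the paper instead constructs products and equalizers separately (the equalizer as $E=\{x : f(x)=g(x)\}$ with the restricted metric, shown complete by a sequence argument) and relies on the standard reduction of all small limits to products and equalizers. Your route is self-contained in that it never invokes that reduction theorem, at the price of slightly heavier indexing; the paper's is lighter per step but leaves the reduction implicit. One delicate point deserves flagging, and it is exactly the one you predicted but then waved through: under the paper's standing convention that all metrics are \emph{extended pseudometrics}, the zero set of the continuous function $(x_j)_j\mapsto d_{F(k)}\bigl(F(u)(x_j),x_k\bigr)$ is indeed closed, but it can be \emph{strictly larger} than $L$, since $d(y,y')=0$ does not force $y=y'$; so "$L$ is the intersection of these closed sets" is not literally true, and closedness of the equality set genuinely uses separatedness of the spaces involved. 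You are in good company: the paper's own equalizer step commits the same sin, writing $f(x)=\lim_n f(e_n)=\lim_n g(e_n)=g(x)$, which tacitly uses uniqueness of limits in the (pseudometric) codomain. For honest (separated) metric spaces both arguments are complete as written; in the pseudometric setting both would need the same patch, e.g.\ passing to metric reflections as the paper does elsewhere.
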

\begin{proof}
    For a collection of complete metric spaces $(X_i,d_i)_{i\in I}$ let $X:=\prod_{i\in I} X_i$ and define $d:X\times X \to [0,\infty]$ by \[d((x_i)_i,(y_i)_i):=\sup_{i\in I}d_i(x_i,y_i).\]
    It is clear that $d$ defines a metric\footnote{Recall that all our metrics are really \emph{extended pseudometrics}. Here, $d$ can take the value $\infty$ even when $d_i$ is finite for every $i$ in $I$.} and that the projection maps $\pi_i:X\to X_i$ are $1$-Lipschitz. Let $(x^n)_n$ be a cauchy sequence in $(X,d)$. Clearly, $(x^n_i)_n$ is a Cauchy sequence in $(X_i,d_i)$ for all $i\in I$. It follows that $(x^n_i)_n$ converges to an element $x_i$ in $(X_i,d_i)$. Denote $x:=(x_i)_i$. For $\epsilon>0$, there exists an $N\geq 1$ such that for $n_1,n_2\geq N$, \[d((x^{n_1}_i)_i,(x^{n_2}_i)_i)\leq \epsilon.\]
    For $i\in I$, there exists $M_i\geq N$ such that $d_i(x_i^{M_i},x_i)\leq \epsilon$. It follows now that \[d_i(x_i^N,x_i)\leq d(x_i^N,x_i^{M_i})+d(x_i^{M_i},x_i)\leq 2\epsilon.\]
    Since $N$ does not depend on $i$, we can take the supremum over all $i\in I$ and conclude that \[d(x^N,x)\leq 2\epsilon.\]
    Therefore, $(x^n)_n$ converges to $x$ in $(X,d)$ and thus it is a complete metric space. The complete metric space $(X,d)$ is the product of $(X_i,d_i)_{i\in I}$.

    For morphisms $f,g:(X,d_X)\to (Y,d_Y)$ in $\mathbf{CMet}$. Let \[E:=\{x\in X\mid f(x)=g(x)\}\] and let $d_E$ be the restriction of $d_X$ to $E\times E$. This forms a metric space $(E,d_E)$. For a Cauchy sequence $(e_n)_n$ in $(E,d_E)$ we know that $(e_n)_n$ converges to some $x$ in $(X,d_X)$. Because $f$ and $g$ are $1$-Lipschitz, we see that \[f(x)=\lim_{n}f(e_n)=\lim_ng(e_n)=g(x).\]
    Therefore, $x\in E$ and $(E,d)$ is complete. The complete metric space $(E,d)$ is the equalizer of $f$ and $g$ in $\mathbf{CMet}$.

    It is clear that $i:\mathbf{CMet}\to \textbf{Met}$ preserves these limits.
\end{proof}
\begin{prop}[Completion]\label{RN3.4}
    The inclusion $i:\mathbf{CMet}\to \mathbf{Met}$ has a left adjoint.
\end{prop}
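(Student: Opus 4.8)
The plan is to construct the metric completion explicitly and to verify that it provides a universal arrow from each object of $\mathbf{Met}$ into $\mathbf{CMet}$; by the standard characterisation of adjoints via universal arrows, this is equivalent to exhibiting the left adjoint together with its unit. For a metric space $(X,d)$, I would take $\hat{X}$ to be the set of Cauchy sequences in $X$, equipped with $\hat{d}\big((x_n)_n,(y_n)_n\big):=\lim_{n\to\infty}d(x_n,y_n)$. The first thing to check is that this limit exists in $[0,\infty]$: since $(x_n)_n$ and $(y_n)_n$ are Cauchy, the triangle inequality gives $\lvert d(x_m,y_m)-d(x_n,y_n)\rvert\leq d(x_m,x_n)+d(y_m,y_n)$ wherever the relevant distances are finite, so the real sequence $\big(d(x_n,y_n)\big)_n$ is either eventually $+\infty$ or eventually a genuine Cauchy sequence in $[0,\infty)$; in either case it converges in $[0,\infty]$. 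Symmetry, the triangle inequality, and $\hat d((x_n)_n,(x_n)_n)=0$ then pass to the limit, so $(\hat X,\hat d)$ is again an extended pseudometric space.

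Next I would define the unit $\eta_X\colon X\to\hat X$ by sending $x$ to the constant sequence $(x,x,\dots)$. It is immediate that $\eta_X$ is an isometry, hence $1$-Lipschitz, and that its image is dense: for a Cauchy sequence $(x_n)_n$ one has $\hat d\big(\eta_X(x_m),(x_n)_n\big)=\lim_n d(x_m,x_n)$, which tends to $0$ as $m\to\infty$ by the Cauchy property.

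The main technical step, and where I expect the real work to lie, is completeness of $\hat X$. Given a Cauchy sequence $(\xi^k)_k$ in $\hat X$, I would use density to choose $x_k\in X$ with $\hat d(\eta_X(x_k),\xi^k)\leq 2^{-k}$; a three-term triangle-inequality estimate $d(x_j,x_k)\leq 2^{-j}+\hat d(\xi^j,\xi^k)+2^{-k}$ then shows that $(x_k)_k$ is Cauchy in $X$, so it determines an element $\xi\in\hat X$, and a similar estimate gives $\xi^k\to\xi$. This is the usual diagonal argument, but it must be carried out carefully in the extended, pseudometric setting; in particular one should confirm that the estimates remain inside a single finite-distance component so that no values of $\infty$ interfere.

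Finally, for the universal property, given a complete metric space $Y$ and a $1$-Lipschitz map $f\colon X\to Y$, I would define $\hat f\colon\hat X\to Y$ by $\hat f\big((x_n)_n\big):=\lim_n f(x_n)$; this limit exists because $f$ is $1$-Lipschitz, so $(f(x_n))_n$ is Cauchy in the complete space $Y$. Passing the Lipschitz inequality to the limit shows $\hat f$ is $1$-Lipschitz, and $\hat f\circ\eta_X=f$ is immediate. Uniqueness follows from the density of $\eta_X(X)$: two $1$-Lipschitz extensions agree on a dense subset and hence everywhere. This exhibits $\eta_X$ as a universal arrow, so $X\mapsto\hat X$ extends to a functor left adjoint to $i$ with unit $\eta$, the functoriality and naturality being then purely formal.
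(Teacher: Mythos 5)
Your construction is the standard Cauchy-sequence completion, which is precisely the functor the paper's one-line proof invokes, and up through the density and completeness steps it is carried out correctly: your observation that the tail of a Cauchy sequence lies in a single finite-distance component, so that $\lim_n d(x_n,y_n)$ exists in $[0,\infty]$, is exactly right, and in the pseudometric setting you may indeed work with Cauchy sequences themselves rather than equivalence classes.

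The genuine gap is in the universal-property step, and it is ironically the pseudometric caveat you flagged (harmlessly) for completeness that actually bites here. In this paper ``metric'' means \emph{extended pseudometric}, so $\mathbf{CMet}$ contains non-separated complete spaces, for instance the two-point space $Y_0$ with all distances $0$. First, $\hat f\bigl((x_n)_n\bigr):=\lim_n f(x_n)$ is not well defined: limits in a pseudometric space are unique only up to distance zero, so defining $\hat f$ requires an arbitrary choice of limit at each point (and one must choose the value $f(x)$ itself on constant sequences to get $\hat f\circ \eta_X=f$ on the nose). Second, and more seriously, your uniqueness argument fails at the level of functions: two $1$-Lipschitz maps agreeing on a dense subset satisfy $d_Y(g_1(\xi),g_2(\xi))=0$ for all $\xi$, but need not be \emph{equal}, and with $Y=Y_0$ the extension of $f$ along $\eta_X$ is genuinely non-unique, since every function into $Y_0$ is $1$-Lipschitz and one may flip the value at any point outside the image of $\eta_X$. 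Read strictly, no universal arrow can exist at all for $X=\mathbb{Q}$: testing against $Y_0$ forces the unit of any would-be universal arrow to be a bijection (existence of factorizations forces injectivity, uniqueness forces surjectivity), and testing against the inclusion $\mathbb{Q}\hookrightarrow\mathbb{R}$ then forces it to be an isometry, contradicting completeness of the target. So your proof --- like the paper's terse assertion --- is correct only after either restricting to separated spaces or identifying $1$-Lipschitz maps at distance zero, i.e.\ passing to the metric reflection of hom-sets, which is the reading the paper adopts implicitly elsewhere (compare the footnote in Proposition \ref{RN3.11} and Remark \ref{RN3.20}). If you state that convention before the uniqueness step, your argument goes through verbatim.
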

\begin{proof}
The completion functor $\overline{(-)}:\mathbf{Met}\to \mathbf{CMet}$ that sends a metric space $(X,d)$ to its completion $\overline{(X,d)}$ is left adjoint to the inclusion functor $i:\mathbf{CMet}\to \mathbf{Met}$.
\end{proof}

Proposition \ref{RN3.4} tells us that $\mathbf{CMet}$ is a \emph{reflective} subcategory of $\mathbf{Met}$. We will use this in the following result about colimits in $\mathbf{Met}$ and $\mathbf{CMet}$.

\begin{prop}\label{RN3.5}
    The categories $\mathbf{CMet}$ and $\mathbf{Met}$ are cocomplete.
\end{prop}
\begin{proof}
For a collection of metric space $(X_i,d_i)_{i\in I}$, let $X:=\coprod_{i\in I}X_i$ and define $d:X\times X\to [0,\infty]$ by \[d(x,y):=\begin{cases}d_i(x,y) \text{ if } x,y\in X_i\\ \infty \text{ otherwise}.\end{cases}\]
Then $(X,d)$ forms a metric spaces and the inclusion maps $\iota_i:X_i\to X$ are $1$-Lipschitz maps. The metric space $(X,d)$ is the coproduct of $(X_i,d_i)_i$. 

For morphisms $f,g:(X,d_X)\to (Y,d_Y)$ in $\textbf{Met}$, let $\sim$ be the smallest equivalence relation such that $y_1\sim y_2$ if there exit an $x\in X$ such that $f(x)=y_1$ and $g(x)=y_2$. Denote $F:=Y/\sim$. Define a map $d:F\times F\to [0,\infty]$ by \[d(y_1,y_2):=\inf\{d_Y(\tilde{y}_1,y)+d_Y(y,\tilde{y}_2)\mid y_1\sim \tilde{y}_1, y_2\sim \tilde{y}_2\}.\]
This map is well-defined and is a metric. The quotient map $Y\to F$ is $1$-Lipschitz and it is easy to verify that $F$ is the coequalizer of $f$ and $g$ in $\textbf{Met}$. 

By Proposition \ref{RN3.4}, $\mathbf{CMet}$ is reflective in $\textbf{Met}_1$ and therefore it is cocomplete. Colimits in $\mathbf{CMet}$ are constructed by reflecting colimits in $\textbf{Met}$.
\end{proof}
For (complete) metric spaces $(X_1,d_1)$ and $(X_2,d_2)$ let $(X_1,d_1)\otimes (X_2,d_2)$ be the (complete) metric space formed by the set $X_1\times X_2$ with the metric \[d((x_1,x_2),(y_1,y_2)):=d_1(x_1,y_1)+d(x_2,y_2).\] 
For $1$-Lipschitz maps $f_1:(X_1,d_{X_1})\to (Y_1,d_{Y_1})$ and $f_2:(X_2,d_{X_2})\to (Y_2,d_{Y_2})$, there is a $1$-Lipschitz map \[f_1\otimes f_2:(X_1,d_{X_1})\otimes (X_2,d_{X_2})\to (Y_1,d_{Y_1})\otimes d(Y_2,d_{Y_2}) \] defined by \[(x_1,x_2)\mapsto (f_1(x_1),f_2(x_2)).\]

This gives a functor $\otimes:\mathbf{CMet}\times \mathbf{CMet}\to \mathbf{CMet}$ and forms a symmetric monoidal product on $\mathbf{CMet}$.

For metric spaces $(X,d_X)$ and $(Y,d_Y)$, let $[(X,d_X),(Y,d_Y)]$ be the set of $1$-Lipschitz maps $(X,d_X)\to (Y,d_Y)$ together with the metric defined by \[d(f,g):=\sup\{d_Y(f(x),g(x))\mid x\in X\}.\]

\begin{prop}\label{RN3.6}
If $(Y,d_Y)$ is complete, then so is $[(X,d_X),(Y,d_Y)]$.
\end{prop}
\begin{proof}
    Let $(f_n)_n$ be a Cauchy sequence in $[(X,d_X),(Y,d_Y)]$, then it is clear that $(f_n(x))_n$ is a Cauchy sequence for every $x$. Therefore $(f_n(x))_n$ converges to an element $f_x\in Y$. 

    Define a map $f:X\to Y$ by sending $x$ to $f_x$. Consider $x_1$ and $x_2$ in $X$ and let $\epsilon>0$, there exists an $n\geq 1 $ such that  \begin{align*}d_Y(f(x_1),f(x_2))& \leq d_Y(f_{x_1},f_n(x_1))+d_Y(f_n(x_1),f_n(x_2))+d_Y(f_n(x_2),f_{x_2})\\
    & \leq d_Y(f_n(x_1),f_n(x_2)+\epsilon \leq d_X(x_1,x_2)+\epsilon.\end{align*}
    Taking $\epsilon\to 0$ shows that $f$ is $1$-Lipschitz. 

    For $\epsilon>0$, there is an $N\geq 1$ such that for $n_1,n_2\geq N$, \[d(f_{n_1},f_{n_2})\leq \epsilon.\]
    Let $n\geq N$. For $x\in X$, there exists $M_x\geq n$ such that $d_Y(f_{x},f_{M_x}(x))\leq \epsilon$ and therefore \[d_Y(f_{x},f_{n}(x))\leq d_Y(f_{x},f_{M_x}(x))+d_Y(f_{M_x}(x),f_n(x)) \leq 2\epsilon.\]
    Since $n$ is independent from $x$, we can conclude that $d(f,f_n)\leq 2\epsilon$ for all $n\geq N$, which means that $(f_n)_n$ converges to $f$ in $[(X,d_X),(Y,d_Y)]$.
\end{proof}
\begin{prop}\label{RN3.7}
    The monoidal category $\mathbf{CMet}$ is closed.
\end{prop}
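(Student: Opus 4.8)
The plan is to exhibit, for each object $(Y,d_Y)$ of $\mathbf{CMet}$, a right adjoint to the functor $(-)\otimes (Y,d_Y)$, the obvious candidate being the internal-hom functor $[(Y,d_Y),-]$. Proposition \ref{RN3.6} guarantees that $[(Y,d_Y),(Z,d_Z)]$ is again complete whenever $(Z,d_Z)$ is, so this functor really does land in $\mathbf{CMet}$; on a $1$-Lipschitz map $h\colon Z\to Z'$ it acts by post-composition $\phi\mapsto h\circ\phi$, which is $1$-Lipschitz for the sup metric since $d(h\circ\phi,h\circ\psi)=\sup_y d_{Z'}(h(\phi(y)),h(\psi(y)))\le \sup_y d_Z(\phi(y),\psi(y))=d(\phi,\psi)$. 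First I would set up the currying bijection on hom-sets, sending a $1$-Lipschitz map $f\colon X\otimes Y\to Z$ to $\hat f\colon X\to [Y,Z]$ defined by $\hat f(x)(y):=f(x,y)$, and conversely a $1$-Lipschitz $g\colon X\to [Y,Z]$ to its uncurrying $\check g(x,y):=g(x)(y)$.

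The core of the argument is checking that these assignments preserve the $1$-Lipschitz condition, and here everything rests on the fact that the additive ($L^1$-type) metric $d((x_1,y_1),(x_2,y_2))=d_X(x_1,x_2)+d_Y(y_1,y_2)$ on $X\otimes Y$ matches the sup metric on $[Y,Z]$. In the forward direction, fixing $x$ shows $\hat f(x)=f(x,-)$ is $1$-Lipschitz in $y$, so $\hat f(x)\in[Y,Z]$; and $d_{[Y,Z]}(\hat f(x_1),\hat f(x_2))=\sup_y d_Z(f(x_1,y),f(x_2,y))\le d_X(x_1,x_2)$, so $\hat f$ is $1$-Lipschitz. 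In the reverse direction a single triangle inequality does the work: $d_Z(\check g(x_1,y_1),\check g(x_2,y_2))\le d_Z(g(x_1)(y_1),g(x_1)(y_2))+d_Z(g(x_1)(y_2),g(x_2)(y_2))\le d_Y(y_1,y_2)+d_X(x_1,x_2)$, using that $g(x_1)$ is $1$-Lipschitz for the first summand and that $g$ is $1$-Lipschitz for the second.

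With the two maps in hand, I would observe that currying and uncurrying are mutually inverse by construction, giving a bijection $\mathbf{CMet}(X\otimes Y,Z)\cong\mathbf{CMet}(X,[Y,Z])$, and then verify naturality in all three variables, which is a routine diagram chase unwinding the definitions of $\otimes$ on morphisms and of post-composition on $[Y,-]$. This produces the desired adjunction $(-)\otimes Y\dashv [Y,-]$ and hence shows $\mathbf{CMet}$ is closed. I do not expect any genuine obstacle: the only substantive point is the compatibility of the $L^1$ tensor metric with the sup hom-metric in the two Lipschitz estimates above, and completeness of the internal hom is handed to us for free by Proposition \ref{RN3.6}; the remaining verifications are bookkeeping.
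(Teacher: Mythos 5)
Your proposal is correct and takes essentially the same approach as the paper: the paper's proof simply asserts the currying bijection $\mathbf{CMet}(X\otimes Y,Z)\cong\mathbf{CMet}(X,[Y,Z])$ as ``easy to verify,'' and your two Lipschitz estimates (exploiting the additive tensor metric against the sup hom-metric, with completeness of $[Y,Z]$ supplied by Proposition \ref{RN3.6}) are precisely that verification spelled out.
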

\begin{proof}
    Let $(X,d_X),(Y,d_Y)$ and $(Z,d_Z)$ be complete metric spaces. It is easy to verify that there is a bijection \[\mathbf{CMet}(X\otimes Y,Z)\cong \mathbf{CMet}(X,[Y,Z]).\]
\end{proof}
\begin{prop}\label{RN3.8}
For metric spaces $X$ and $Y$, $\overline{X\otimes Y}\cong \overline{X}\otimes \overline{Y}$
\end{prop}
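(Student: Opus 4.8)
The plan is to exhibit $\overline{X}\otimes\overline{Y}$ as \emph{a} completion of $X\otimes Y$ and then invoke the uniqueness of completions. Recall from Proposition \ref{RN3.4} that completion is left adjoint to the inclusion $i:\mathbf{CMet}\to\textbf{Met}$; write $\eta_A:A\to\overline{A}$ for the unit. Concretely, $\eta_A$ is isometric with dense image, and its universal property says that every $1$-Lipschitz map from $A$ to a complete space factors uniquely through $\eta_A$; consequently any complete metric space containing a dense isometric copy of $A$ is canonically isomorphic to $\overline{A}$. I will use this last characterization. First note that $\overline{X}\otimes\overline{Y}$ is complete: by definition $\otimes$ restricts to a functor on $\mathbf{CMet}$, and completeness of the tensor of two complete spaces is checked exactly as for products in Proposition \ref{RN3.3}, using that for the sum metric a sequence is Cauchy, resp.\ convergent, iff both coordinate sequences are.

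Next I would build the comparison map. Functoriality of $\otimes$ applied to $\eta_X$ and $\eta_Y$ yields a $1$-Lipschitz map
\[ j:=\eta_X\otimes\eta_Y : X\otimes Y\longrightarrow \overline{X}\otimes\overline{Y}, \qquad (x,y)\mapsto(\eta_X x,\eta_Y y). \]
Because the tensor metric is the \emph{sum} of the two coordinate metrics and each $\eta$ is isometric, $j$ is itself isometric:
\[ d\big(j(x_1,y_1),j(x_2,y_2)\big)=d_{\overline X}(\eta_X x_1,\eta_X x_2)+d_{\overline Y}(\eta_Y y_1,\eta_Y y_2)=d_X(x_1,x_2)+d_Y(y_1,y_2). \]
Its image is dense: given $(\bar x,\bar y)\in\overline X\otimes\overline Y$ and $\epsilon>0$, density of $\eta_X$ and $\eta_Y$ provides $x\in X$ and $y\in Y$ with $d(\eta_X x,\bar x)<\epsilon/2$ and $d(\eta_Y y,\bar y)<\epsilon/2$, whence $d\big(j(x,y),(\bar x,\bar y)\big)<\epsilon$, again by additivity of the metric.

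Thus $j$ realizes $X\otimes Y$ as a dense isometric subspace of the complete space $\overline{X}\otimes\overline{Y}$, so by the characterization above $\overline{X}\otimes\overline{Y}\cong\overline{X\otimes Y}$ compatibly with the units, which is the claim. The only genuinely delicate point is the characterization of the completion by a dense isometric embedding, which must be read correctly for extended pseudometrics (where ``isometric'' need not force injectivity); this follows from the universal property of $\eta$ in Proposition \ref{RN3.4} by the usual argument, extending $j$ and a chosen approximating map to mutually inverse $1$-Lipschitz maps. Alternatively---and perhaps more in keeping with the closed-monoidal machinery just set up---one can avoid this lemma entirely: for every complete $Z$, repeated currying through the internal hom, using completeness of $[-,Z]$ (Proposition \ref{RN3.6}, Proposition \ref{RN3.7}) and the adjunction of Proposition \ref{RN3.4}, gives natural bijections $\textbf{Met}(X\otimes Y,Z)\cong\textbf{Met}(\overline{X}\otimes\overline{Y},Z)$, and then the Yoneda lemma forces $\overline{X\otimes Y}\cong\overline{X}\otimes\overline{Y}$.
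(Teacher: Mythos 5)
Your proposal is correct, but your primary route differs from the paper's. The paper proves Proposition \ref{RN3.8} exactly by the argument you relegate to your final sentence: it fixes an arbitrary complete $Z$ and chains natural bijections $\mathbf{Met}(X\otimes Y,Z)\cong\mathbf{Met}(X,[Y,Z])\cong\mathbf{CMet}(\overline{X},[Y,Z])\cong\cdots\cong\mathbf{CMet}(\overline{X}\otimes\overline{Y},Z)$, alternately currying through the internal hom and applying the completion adjunction of Proposition \ref{RN3.4}, with Proposition \ref{RN3.6} guaranteeing at each stage that the hom-space $[Y,Z]$ (resp.\ $[\overline{X},Z]$) is complete so that the adjunction applies; since $Z$ is arbitrary, representability (Yoneda) gives the isomorphism. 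Your main argument instead constructs the comparison map explicitly as $j=\eta_X\otimes\eta_Y$, checks it is isometric with dense image (both immediate from the sum metric), and invokes the characterization of the completion as the essentially unique complete space receiving a dense isometric map. This is more concrete and exhibits the isomorphism and its compatibility with the units directly, but it forces you to confront exactly the delicacy you flag: in this paper all metrics are \emph{extended pseudometrics}, so ``dense isometric embedding'' does not imply injectivity, and the uniqueness-of-completion lemma must be rederived from the universal property of $\eta$ (extending $j$ and an approximating map to mutually inverse $1$-Lipschitz maps, with equality of composites only checked via the uniqueness clause of the adjunction). The paper's currying proof sidesteps all pointwise and pseudometric subtleties by never leaving the hom-set level, at the cost of needing the closed structure (Propositions \ref{RN3.6} and \ref{RN3.7}) already in place --- which, at this point in the paper, it is. Both arguments are sound; your explicit one additionally records that the isomorphism is induced by $\eta_X\otimes\eta_Y$, which the paper's proof leaves implicit.
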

\begin{proof}
    Let $Z$ be a complete metric space. We have the following bijections: 
\begin{center}\begin{tabular}{c}
    $X\otimes Y  \to Z$ \\
    \hline
    $X  \to [Y,Z]$\\
    \hline 
    $\overline{X}\to [Y,Z]$\\
    \hline 
    $\overline{X}\otimes Y \to Z$\\ \hline
    $Y\to [\overline{X},Z]$\\ \hline
    $\overline{Y}\to [\overline{X},Z]$\\ \hline
    $\overline{X}\otimes \overline{Y}\to Z$
    \end{tabular}\end{center}

Here we used that by Proposition \ref{RN3.6}, $[Y,Z]$ and $[\overline{X},Z]$ are complete, since $Z$ is. Since $Z$ was chosen arbitrarily, the claim now follows.\end{proof}

Proposition \ref{RN3.7} says that $\mathbf{CMet}$ is a closed monoidal category. In what follows we will look at categories that are \emph{enriched} over this closed monoidal category. The 2-category of $\mathbf{CMet}$-enriched categories, enriched functors and enriched natural transformations is denoted as $\mathbf{CMet}\textbf{-Cat}$. 

The forgetful functor $U:\mathbf{CMet}\to \textbf{Set}$ induces a 2-functor $U_*:\mathbf{CMet}\textbf{-Cat}\to \textbf{Cat}$. Therefore, for $\mathbf{CMet}$-enriched categories $\mathcal{C}$ and $\mathcal{D}$, there is a functor \[\mathbf{CMet}\textbf{-Cat}[\mathcal{C},\mathcal{D}]\to \textbf{Cat}[U_*\mathcal{C},U_*\mathcal{D}].\]

The following lemmas will be used later to lift the results from section \ref{RNSection2} to the enriched setting.

\begin{lem}\label{RN3.9}
For a $\mathbf{CMet}$-enriched categories $\mathcal{C}$, the functor \[\mathbf{CMet}\mathbf{-Cat}[\mathcal{C},\mathbf{CMet}]\to \mathbf{Cat}[U_*\mathcal{C},U_*\mathbf{CMet}]\]
is full and faithful. 
\end{lem}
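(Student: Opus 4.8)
The plan is to treat faithfulness and fullness separately, both resting on one observation: the underlying-set functor $U\colon\mathbf{CMet}\to\textbf{Set}$ is represented by the monoidal unit. Writing $\ast$ for the one-point metric space — which is the unit of $\otimes$, since $\ast\otimes X\cong X$ — a $1$-Lipschitz map $\ast\to X$ is exactly a point of $X$, so $U\cong\mathbf{CMet}(\ast,-)$, and in particular $U$ is faithful: a $1$-Lipschitz map is determined by its values on points. I would record this first. With it in hand, faithfulness of the functor in the statement is immediate. An enriched natural transformation $\alpha\colon F\Rightarrow G$ between $\mathbf{CMet}$-functors $F,G\colon\mathcal{C}\to\mathbf{CMet}$ is nothing but a family of components $\alpha_c\in U[Fc,Gc]$, i.e. $1$-Lipschitz maps $\alpha_c\colon Fc\to Gc$, and these are precisely the components of the underlying ordinary transformation $U_*\alpha$; hence two enriched transformations with the same underlying transformation have the same components and so coincide.

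For fullness I would start from an ordinary natural transformation $\beta\colon U_*F\Rightarrow U_*G$ and show it is the image of an enriched one. Its components $\beta_c\colon Fc\to Gc$ are morphisms of $U_*\mathbf{CMet}$, i.e. $1$-Lipschitz maps, so they are already legitimate candidate components; the only thing left to verify is the $\mathbf{CMet}$-naturality axiom. This axiom asserts the commutativity in $\mathbf{CMet}$ of the square whose two legs $\mathcal{C}(c,c')\to[Fc,Gc']$ send a point $h$ to $\beta_{c'}\circ Fh$ (post-composition with $\beta_{c'}$, after $F_{c,c'}$) and to $Gh\circ\beta_c$ (pre-composition with $\beta_c$, after $G_{c,c'}$) respectively. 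By faithfulness of $U$, these two $1$-Lipschitz maps out of $\mathcal{C}(c,c')$ are equal as soon as they agree on every point $h\in\mathcal{C}(c,c')$; and agreement at $h$ is exactly the equation $\beta_{c'}\circ Fh=Gh\circ\beta_c$, which is the ordinary naturality of $\beta$ at the morphism $h$ of $U_*\mathcal{C}$. Hence the square commutes, $\beta$ underlies an enriched natural transformation, and the functor is full.

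The only real content sits in the fullness step, and the single point to get right there is that evaluating the enriched naturality square on points reduces it exactly to the ordinary naturality squares — legitimate precisely because the unit $\ast$ separates the morphisms of $\mathbf{CMet}$. No completeness or metric estimate enters; the argument is purely formal and would go through verbatim over any closed monoidal category whose unit represents a faithful underlying-set functor.
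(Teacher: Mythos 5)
Your proof is correct and takes essentially the same approach as the paper's: both rest on the observation that the one-point space (the monoidal unit) represents the underlying-set functor $U$, so that components of enriched and ordinary transformations correspond bijectively and the enriched naturality square can be checked on points. You merely spell out the pointwise verification of $\mathbf{CMet}$-naturality that the paper's terse proof leaves implicit.
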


\begin{proof}
Let $F$ and $G$ be enriched functor $\mathcal{C}\to \mathbf{CMet}$. There is a one-to-one correspondence between $1$-Lipschitz maps $Fc\to Gc$ and $1\to [Fc,Gc]$ for all objects $c$ in $\mathcal{C}$. It follows now that every natural transformation $U_*F\to U_*G$ can be uniquely lifted to an enriched natural transformation $F\to G$.
\end{proof}
The following corollary states that if the non-enriched right Kan extension of enriched functors is an enriched functor, then it is also the enriched right Kan extension.

\begin{cor}\label{RN3.10}
    Let $\mathcal{C},\mathcal{D}$ be $\mathbf{CMet}$-enriched categories. Let $F:\mathcal{C}\to \mathbf{CMet}$, $G:\mathcal{C}\to \mathcal{D}$ and $H:\mathcal{D}\to \mathbf{CMet}$ be enriched functors and let $\epsilon:U_*H\circ U_*G\to U_*F$ be a (non-enriched) natural transformation such that \[\begin{tikzcd}
	{U_*{\mathcal{C}}} && {U_*\mathbf{CMet}} \\
	\\
	{U_*\mathcal{D}}
	\arrow[""{name=0, anchor=center, inner sep=0}, "{U^*F}", from=1-1, to=1-3]
	\arrow["{U_*G}"', from=1-1, to=3-1]
	\arrow["{U_*H}"', from=3-1, to=1-3]
	\arrow["\epsilon", shorten <=13pt, shorten >=9pt, Rightarrow, from=3-1, to=0]
\end{tikzcd}\]
$\epsilon$ exhibits $U_*H$ as the right Kan extension of $U_*F$ along $U_*G$, then there exists a unique enriched natural transformation $\tilde{\epsilon}:HG\to F$ such that $U_*\tilde{\epsilon}=\epsilon$ and $\Tilde{\epsilon}$ exhibits $H$ as the right Kan extension of $F$ along $G$.
\end{cor}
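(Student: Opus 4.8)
The plan is to deduce both assertions formally from the fully faithfulness of Lemma \ref{RN3.9}, invoked once for $\mathcal{C}$ and once for $\mathcal{D}$, together with the fact (already recorded in the excerpt) that $U_*:\mathbf{CMet}\mathbf{-Cat}\to \mathbf{Cat}$ is a $2$-functor. Write $\Phi_\mathcal{C}$ and $\Phi_\mathcal{D}$ for the comparison functors $\mathbf{CMet}\mathbf{-Cat}[\mathcal{C},\mathbf{CMet}]\to \mathbf{Cat}[U_*\mathcal{C},U_*\mathbf{CMet}]$ and $\mathbf{CMet}\mathbf{-Cat}[\mathcal{D},\mathbf{CMet}]\to \mathbf{Cat}[U_*\mathcal{D},U_*\mathbf{CMet}]$; both are full and faithful by Lemma \ref{RN3.9}.

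First I would construct $\tilde{\epsilon}$. Since $H$, $G$ and $F$ are enriched, both $HG$ and $F$ are objects of $\mathbf{CMet}\mathbf{-Cat}[\mathcal{C},\mathbf{CMet}]$, and $U_*(HG)=U_*H\circ U_*G$; thus $\epsilon$ is a morphism $\Phi_\mathcal{C}(HG)\to \Phi_\mathcal{C}(F)$. By fullness and faithfulness of $\Phi_\mathcal{C}$ there is a unique enriched natural transformation $\tilde{\epsilon}:HG\to F$ with $U_*\tilde{\epsilon}=\epsilon$, which settles the first assertion.

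To verify that $\tilde{\epsilon}$ exhibits $H$ as the enriched right Kan extension, I would fix an arbitrary enriched functor $K:\mathcal{D}\to \mathbf{CMet}$ and consider the square
\[\begin{tikzcd}
	{\mathbf{CMet}\mathbf{-Cat}[\mathcal{D},\mathbf{CMet}](K,H)} && {\mathbf{CMet}\mathbf{-Cat}[\mathcal{C},\mathbf{CMet}](KG,F)} \\
	{\mathbf{Cat}[U_*\mathcal{D},U_*\mathbf{CMet}](U_*K,U_*H)} && {\mathbf{Cat}[U_*\mathcal{C},U_*\mathbf{CMet}](U_*K\,U_*G,U_*F)}
	\arrow["{\beta\mapsto \tilde{\epsilon}\circ \beta G}", from=1-1, to=1-3]
	\arrow["{U_*}"', from=1-1, to=2-1]
	\arrow["{U_*}", from=1-3, to=2-3]
	\arrow["{\gamma\mapsto \epsilon\circ \gamma\, U_*G}"', from=2-1, to=2-3]
\end{tikzcd}\]
The square commutes because $U_*$ is a $2$-functor (so it preserves whiskering and vertical composition) and $U_*\tilde{\epsilon}=\epsilon$. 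The left vertical map is a bijection by fully faithfulness of $\Phi_\mathcal{D}$, and the right vertical map is a bijection by fully faithfulness of $\Phi_\mathcal{C}$ together with $U_*(KG)=U_*K\,U_*G$. The bottom map is a bijection precisely because $\epsilon$ exhibits $U_*H$ as the right Kan extension of $U_*F$ along $U_*G$. Hence the top map is a bijection, and since $K$ was arbitrary this is exactly the statement that $\tilde{\epsilon}$ exhibits $H$ as the enriched right Kan extension. Uniqueness of the lift in the first part also follows, since $\Phi_\mathcal{C}$ is faithful.

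The argument carries no analytic content; the only genuinely delicate point is the bookkeeping. One must be careful to apply Lemma \ref{RN3.9} to \emph{both} $\mathcal{C}$ and $\mathcal{D}$ with the same enriching base $\mathbf{CMet}$, to identify $U_*(KG)$ with $U_*K\,U_*G$ and $U_*(HG)$ with $U_*H\,U_*G$ so that the objects of the square match up, and to use that $U_*$ is a $2$-functor so that the square genuinely commutes. A secondary point worth stating explicitly is that ``exhibits $H$ as the right Kan extension'' here means the $2$-categorical universal property in $\mathbf{CMet}\mathbf{-Cat}$ that the square encodes; the result is then a purely formal consequence of having a comparison that is bijective on natural transformations.
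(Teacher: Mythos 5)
Your proof is correct and is essentially the paper's own argument: the paper states this corollary with no explicit proof, treating it as an immediate consequence of Lemma \ref{RN3.9}, and your construction of $\tilde{\epsilon}$ by full faithfulness together with the transfer-of-universal-property square (using $2$-functoriality of $U_*$ and Lemma \ref{RN3.9} for both $\mathcal{C}$ and $\mathcal{D}$) is exactly that deduction made explicit. The one caveat, which you yourself flag and which the paper shares since Lemma \ref{RN3.9} is all it has available, is that the square yields the universal property only as a bijection of \emph{sets} of enriched natural transformations (the weak, $2$-categorical Kan extension), whereas Kelly's notion invoked later in Theorem \ref{RN3.16} via weighted limits asks for an isometric isomorphism of hom-objects in $\mathbf{CMet}$; under the paper's own reading of ``exhibits'', your argument is complete.
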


\subsection{\texorpdfstring{$\mathbf{Prob}$ is enriched over $\mathbf{CMet}$}{Prob is enriched over CMet}}\label{RNSection3.2}
Let $\bfomega_1:=(\Omega_1,\mathcal{F}_1,\mathbb{P}_1)$ and $\bfomega_2:=(\Omega_2,\mathcal{F}_2,\mathbb{P}_2)$ be probability spaces. Let $\mathrm{prob}[\bfomega_1,\bfomega_2]$ be the set of measure-preserving maps $\bfomega_1\to \bfomega_2$. For $f_1,f_2\in \mathrm{prob}[\bfomega_1,\bfomega_2]$, define \[d_{\bfomega_1,\bfomega_2}(f_1,f_2):=\sup\{\mathbb{P}(f_1^{-1}(A)\triangle f_2^{-1}(A))\mid A \text{ measurable subset of }\bfomega_2\}.\]
Note that that this is well-defined as it does not depend on the choice of representative. This turns $\mathrm{prob}[\bfomega_1,\bfomega_2]$ into a metric space. Proposition \ref{RN3.11} says that in the case that $\bfomega_2$ is a finite probability space, this metric space is complete.

\begin{prop} \label{RN3.11}
Let $\bfomega:=(\Omega,\mathcal{F},\mathbb{P})$ and $(A,p)$ be probability spaces, where $A$ is finite. Then $\mathrm{prob}[\bfomega,(A,p)]$ is a complete metric space. 
\end{prop}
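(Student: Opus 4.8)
The plan is to identify a measure-preserving map $f:\bfomega\to(A,p)$ with the measurable partition $\bigl(f^{-1}(\{a\})\bigr)_{a\in A}$ of $\Omega$ subject to the marginal constraints $\mathbb{P}\bigl(f^{-1}(\{a\})\bigr)=p_a$, and then to transport the completeness problem to the measure algebra of $\bfomega$. Writing $E_a^{(n)}:=f_n^{-1}(\{a\})$ for a given sequence $(f_n)_n$, and using that $A$ is finite (so every $B\subseteq A$ is measurable with $f^{-1}(B)=\bigcup_{a\in B}f^{-1}(\{a\})$), I would first record the two elementary estimates
\[
\max_{a\in A}\mathbb{P}\bigl(E_a^{(m)}\triangle E_a^{(n)}\bigr)\ \le\ d\bigl(f_m,f_n\bigr)\ \le\ \sum_{a\in A}\mathbb{P}\bigl(E_a^{(m)}\triangle E_a^{(n)}\bigr),
\]
where $d:=d_{\bfomega,(A,p)}$. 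The left inequality comes from testing against singletons $B=\{a\}$; the right one from $\bigl(\bigcup_a U_a\bigr)\triangle\bigl(\bigcup_a V_a\bigr)\subseteq\bigcup_a(U_a\triangle V_a)$ together with subadditivity of $\mathbb{P}$. Since $A$ is finite, this shows that $(f_n)_n$ is Cauchy for $d$ if and only if each fibre sequence $\bigl(E_a^{(n)}\bigr)_n$ is Cauchy for the pseudometric $\rho(U,V):=\mathbb{P}(U\triangle V)$ on $\mathcal{F}$.

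The next step is to produce candidate limit fibres. Since $\rho(U,V)=\int\lvert \mathbf{1}_U-\mathbf{1}_V\rvert\,\mathrm d\mathbb{P}=d_{L^1}(\mathbf{1}_U,\mathbf{1}_V)$, the assignment $U\mapsto\mathbf{1}_U$ isometrically embeds $(\mathcal{F},\rho)$ into $\RV(\bfomega)$, and its image (the set of indicator functions) is closed there: if $\mathbf{1}_{U_n}\to g$ in $L^1$, then a subsequence converges $\mathbb{P}$-almost everywhere, so $g$ is $\{0,1\}$-valued a.e.\ and hence equals $\mathbf{1}_U$ for $U:=\{g=1\}$. By the Riesz--Fischer theorem (Theorem \ref{RN2.1}) the space $\RV(\bfomega)$ is complete, so the closed subset $(\mathcal{F},\rho)$ is complete as well. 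Applying this to each of the finitely many fibres yields measurable sets $E_a$ with $\mathbb{P}\bigl(E_a^{(n)}\triangle E_a\bigr)\to 0$, and in particular $\mathbb{P}(E_a)=\lim_n\mathbb{P}\bigl(E_a^{(n)}\bigr)=p_a$.

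The main obstacle, and the only genuinely measure-theoretic point, is to check that the limit sets $(E_a)_{a\in A}$ assemble into a bona fide measurable partition, so that they define a measure-preserving map. For disjointness, for $a\neq b$ I would compare $\int\mathbf{1}_{E_a}\mathbf{1}_{E_b}\,\mathrm d\mathbb{P}$ with $\int\mathbf{1}_{E_a^{(n)}}\mathbf{1}_{E_b^{(n)}}\,\mathrm d\mathbb{P}=0$: since the integrands are indicators, bounded by $1$, the $L^1$-convergence of each factor forces $\mathbb{P}(E_a\cap E_b)=0$. For covering, $\sum_{a}\mathbf{1}_{E_a^{(n)}}=1$ $\mathbb{P}$-a.s., and the finite sum $\sum_a\mathbf{1}_{E_a^{(n)}}$ converges in $L^1$ to $\sum_a\mathbf{1}_{E_a}$, whence $\sum_a\mathbf{1}_{E_a}=1$ a.s. Thus $(E_a)_a$ is a partition of $\Omega$ up to a $\mathbb{P}$-null set $N$; redistributing $N$ into one chosen fibre produces a genuine measurable partition with unchanged marginals, and therefore a measure-preserving map $f\in\mathrm{prob}[\bfomega,(A,p)]$. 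Finally the right-hand sandwich estimate gives $d(f_n,f)\le\sum_a\mathbb{P}\bigl(E_a^{(n)}\triangle E_a\bigr)\to 0$, so $f_n\to f$ and $\mathrm{prob}[\bfomega,(A,p)]$ is complete.
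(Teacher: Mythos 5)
Your proof is correct and follows essentially the same route as the paper: both identify a measure-preserving map with the indicator functions of its fibres, reduce completeness to the Riesz--Fischer completeness of $\RV(\bfomega)$ (indicators forming a closed subset), and verify that the limiting indicators reassemble, after null-set surgery, into a measure-preserving map with the right marginals. The only cosmetic difference is that you track the singleton fibres via a two-sided comparison estimate between $d_{\bfomega,(A,p)}$ and the fibrewise pseudometric $\mathbb{P}(U\triangle V)$, whereas the paper uses the distance-preserving embedding $f\mapsto \left(1_{f^{-1}(A')}\right)_{A'\subseteq A}$ into the finite product $\RV(\bfomega)^{\mathcal{P}(A)}$ and closedness of its image.
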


\begin{proof}
Let $\mathrm{prob}[\bfomega,(A,p)]/=_\mathbb{P}$ be the metric space of \emph{equivalence classes} of $\mathbb{P}$-almost surely equal measure-preserving maps together with the metric induced by $d_{\bfomega,(A,p)}$\footnote{This metric space is the \emph{metric reflection} of the \emph{pseudo}metric space $\mathrm{prob}[\bfomega,(A,p)].$}.

To prove the claim, it is enough to show that $\mathrm{prob}[\bfomega,(A,p)]/=_\mathbb{P}$ is complete. We will show that $\mathrm{prob}[\bfomega,(A,p)]/=_\mathbb{P}$ is isomorphic, as a metric space, to a complete subspace of $\RV(\bfomega)^{\mathcal{P}(A)}$, the $\mathcal{P}(A)$-fold product of the metric space $\RV(\bfomega)$. There is a map $$\varphi:\mathrm{prob}[\bfomega,(A,p)]/=_\mathbb{P}\to \RV(\bfomega)^{\mathcal{P}(A)}$$ defined by $$f\mapsto \left(1_{f^{-1}(A')}\right)_{A'\subseteq A}.$$ This map is injective, well-defined and distance-preserving. To show that $\mathrm{Im}(\varphi)$ is closed, consider a sequence $(f_n)_n$ in $\mathrm{prob}[\bfomega_1,(A,p)]/_\mathbb{P}$ such that $\varphi(f_n)$ converges to $g:=(g_{A'})_{A'\subseteq A} \in \RV(\bfomega_1)^{\mathcal{P}(A)}.$ Then $g_{A'}$ is ($\mathbb{P}$-almost surely) equal to $1_{E_{A'}}$ for some measurable subset $E_{A'}$. We also have that $g_{A_1}g_{A_2}=g_{A_1\cap A_2}$ and therefore $$\mathbb{P}(E_{A_1}\cap E_{A_2})=\mathbb{P}(E_{A_1\cap A_2}).$$
In particular we have that $\mathbb{P}\left (E_{\{a_1\}}\cap E_{\{a_2\}}\right)=0$  for $a_1\not=a_2$, since $g_\emptyset=\lim_{n\to \infty}\varphi(f_n)_\emptyset=0.$ Therefore, we can assume without loss of generality that $\left(E_{\{a\}}\right)_{a\in A}$ are pairwise disjoint. 

Furthermore, for every $a\in A$, $$\mathbb{P}\left(E_{\{a\}}\right)=\lim_{n\to \infty}\int 1_{f_n^{-1}(\{a\})}\mathrm{d}\mathbb{P}=p_a$$

It follows now that there exists a measure-preserving map $f:\bfomega_1\to (A,p)$ such that $f^{-1}(\{a\})=E_{\{a\}}$, which is $\mathbb{P}$-almost surely unique. This means that $f\in \mathrm{prob}[\bfomega,(A,p)]$ and $\varphi(f)=g.$

We have shown that $\mathrm{prob}[\bfomega,(A,p)]$ is isomorphic, as a metric space, to a closed subset of $\RV(\bfomega)^{\mathcal{P}(A)}$. Since $\mathcal{P}(A)$ is finite, $\RV(\bfomega)^{\mathcal{P}(A)}$ is complete. We can conclude now that $\mathrm{prob}[\bfomega,(A,p)]$ is complete as well. 
\end{proof}
For probability spaces $\bfomega_1$ and $\bfomega_2$ let $\mathrm{Prob}_r[\bfomega_1,\bfomega_2]$ be the metric space we obtain by scaling the metric of $\mathrm{prob}[\bfomega_1,\bfomega_2]$ by a factor $r>0$. Moreover, let $\mathbf{Prob}_r[\bfomega_1,\bfomega_2]$ be the completion of $\mathrm{Prob}_r[\bfomega_1,\bfomega_1]$. 
Note that the limit of the diagram 
\[\begin{tikzcd}
	{\mathbf{Prob}_1[\bfomega_1,\bfomega_2}] & \ldots &{\mathbf{Prob}_r[\bfomega_1,\bfomega_2}] & \ldots
	\arrow[from=1-2, to=1-1]
	\arrow[from=1-3, to=1-2]
 	\arrow[from=1-4, to=1-3]
\end{tikzcd}\]
is the discrete pseudometric space of measure preserving maps $f:\bfomega_1\to \bfomega_2$. 

Consider probability spaces $\bfomega_1, \bfomega_2$ and $\bfomega_3$. Sending a pair of measure-preserving maps $f:\bfomega_1\to \bfomega_2$ and $g:\bfomega_2\to \bfomega_3$, to $g\circ f:\bfomega_1\to \bfomega_3$, defines a $1$-Lipschitz map \[-\circ -: \mathrm{prob}[\bfomega_1,\bfomega_2]\otimes \mathrm{prob}[\bfomega_2,\bfomega_3]\to \mathrm{prob}[\bfomega_1,\bfomega_3]\]
Indeed, this follows from the fact that \begin{align*}\mathbb{P}_1((g_1f_1)^{-1}(A)\triangle (g_2f_2)^{-1}(A))& \leq \mathbb{P}_1((g_1f_1)^{-1}(A)\triangle (g_2f_1)^{-1}(A))+\mathbb{P}_1((g_2f_1)^{-1}(A)\triangle (g_2f_2)^{-1}(A))\\
& = \mathbb{P}_2(g_1^{-1}(A)\triangle g_2^{-1}(A))+\mathbb{P}_1(f_1^{-1}(g_2^{-1}(A))\triangle f_2^{-1}(g_2^{-1}(A)))\\
& \leq d_{\bfomega_2,\bfomega_3}(g_1,f_2)+d_{\bfomega_1,\bfomega_2}(f_1,f_2)
\end{align*}
for all measurable subsets of $\bfomega_3$. Using Proposition \ref{RN3.8}, this induces a $1$-Lipschitz map 
\[\mathbf{Prob}_r[\bfomega_1,\bfomega_2]\otimes \mathbf{Prob}_r[\bfomega_2,\bfomega_3]\to \mathbf{Prob}_r[\bfomega_1,\bfomega_3]\]

The above describes a category enriched over $\mathbf{CMet}$, whose objects are probability spaces and whose hom-objects are given by $\mathbf{Prob}_r[\bfomega_1,\bfomega_2]$ for probability spaces $\bfomega_1$ and $\bfomega_2$. We denote this category by $\mathbf{Prob}_r$. The subcategory of finite probability spaces is denoted as $\mathbf{Prob}_r^f$ and clearly there is an enriched inclusion functor $i_r:\mathbf{Prob}_r^f\to \mathbf{Prob}_r$. Furthermore, note that $U^*\mathbf{Prob}_r$ is the (non-enriched) category $\mathbf{Prob}$ for all $r>0$.
\subsection{\texorpdfstring{The enriched functors $\M_r$ and $\RV_r$}{The enriched functors Mr and RVr}}\label{RNSection3.3}
In this section we will show that everything proved in section \ref{RNSection2} still works in the enriched context. 

The (non-enriched) functor $\M_r:\textbf{Prob}\to \mathbf{CMet}$ from Section \ref{2} induces a $\mathbf{CMet}$-enriched functor $\textbf{Prob}_r\to\mathbf{CMet}$. Indeed, the assignment $f\mapsto \M_r(f)$, induces a \emph{$1$-Lipschitz map} \[\mathrm{prob}_r[\bfomega_1,\bfomega_2]\to [\M_r(\bfomega_1),\M_r(\bfomega_2)].\]
To see this, consider two measure preserving maps $f_1,f_2:\bfomega_1\to \bfomega_2$. For $\mu\in \M_r(\bfomega_1)$ and a measurable subset $A$ of $\bfomega_2$, we find that \begin{align*}
    \lvert \M_r(f_1)(\mu)(A)-\M_r(f_2)(\mu(A))\rvert & = \lvert \mu(f_1^{-1}(A))-\mu(f_2^{-1}(A)))\rvert\\
    & = \left\lvert\int 1_{f_1^{-1}(A)}-1_{f_2^{-1}(A)} \text{d}\mu\right\rvert\\
    &\leq \mu\left(f_1^{-1}(A)\triangle f_2^{-1}(A)\right) \\
    &\leq r\mathbb{P}_1(f_1^{-1}(A)\triangle f_2^{-1}(A))
    \leq rd_{\Omega_1,\Omega_2}(f_1,f_2)
\end{align*}
Taking the supremum over all measurable subsets $A$ of $\bfomega_2$, gives us that \[d_{\M_r(\bfomega_2)}(\M_r(f_1)(\mu),\M_r(f_2)(\mu)) \leq rd_{\bfomega_1,\bfomega_2}(f_1,f_2).\]
Finally, by taking the supremum of over all $\mu\in \M_r(\bfomega_1)$, we see that the assignment $f\mapsto \M_r(f)$ defines a $1$-Lipschitz map. This gives a $1$-Lipschitz map $\mathbf{Prob}_r[\bfomega_1,\bfomega_2]\to [\M_r(\bfomega_1),\M_r(\bfomega_2)]$. The obtained enriched functor $\textbf{Prob}_r\to \mathbf{CMet}$ is also denoted by $\M_r$.

The restriction to finite probability spaces is also by $\M_r^f$ and is an enriched functor since it is the composition of the enriched functors $\M_r:\textbf{Prob}_r\to \mathbf{CMet}$ and $i_r:\textbf{Prob}_r^f\to \textbf{Prob}_r$.

\begin{prop}\label{RN3.12}
The commutative triangle of enriched functors 
\[\begin{tikzcd}
	{\mathbf{Prob}_r^f} && {\mathbf{CMet}} \\
	{\mathbf{Prob}_r} 
	\arrow["{\M_r^f}", from=1-1, to=1-3]
	\arrow["{i_r}"', from=1-1, to=2-1]
	\arrow["{\M_r}"', from=2-1, to=1-3]
\end{tikzcd}\]
exhibits $\M_r$ as the right Kan extension of $\M_r^f$ along $i_r$. 
\end{prop}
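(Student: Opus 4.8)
The plan is to deduce the enriched statement directly from the non-enriched right Kan extension established in Section \ref{RNSection2.2}, using the transfer principle of Corollary \ref{RN3.10}. First I would arrange the data to match the hypotheses of that corollary: take $\mathcal{C}:=\mathbf{Prob}_r^f$, $\mathcal{D}:=\mathbf{Prob}_r$, and the enriched functors $F:=\M_r^f$, $G:=i_r$ and $H:=\M_r$. The enrichment of $\M_r$ was verified in the preceding discussion, and $\M_r^f=\M_r\circ i_r$ is enriched as a composite of enriched functors, so all three are genuinely $\mathbf{CMet}$-enriched. Since the triangle commutes strictly, the comparison natural transformation $\epsilon:=\mathrm{id}\colon U_*\M_r\circ U_*i_r\to U_*\M_r^f$ is simply the identity.

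Next I would verify the key hypothesis of Corollary \ref{RN3.10}, namely that $\epsilon$ exhibits the underlying functor $U_*\M_r$ as the (non-enriched) right Kan extension of $U_*\M_r^f$ along $U_*i_r$. By construction $U_*i_r=i\colon\textbf{Prob}_f\to\textbf{Prob}$, while $U_*\M_r$ and $U_*\M_r^f$ are exactly the non-enriched functors $\M_r$ and $\M_r^f$ of Section \ref{RNSection2.2}. That this data forms the right Kan extension is precisely the content of Theorem \ref{RN2.6} (action on objects) together with Proposition \ref{RN2.7} (action on morphisms); since the triangle commutes, the associated counit is the identity $\epsilon$.

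Finally, Corollary \ref{RN3.10} produces a unique enriched natural transformation $\tilde\epsilon\colon\M_r i_r\to\M_r^f$ with $U_*\tilde\epsilon=\epsilon$ that exhibits $\M_r$ as the enriched right Kan extension of $\M_r^f$ along $i_r$. Because $\epsilon$ is the identity, $\tilde\epsilon$ is likewise the identity enriched natural transformation, which is exactly the commuting triangle in the statement, completing the argument.

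I would expect no serious obstacle: the substantive work lives in Theorem \ref{RN2.6} and in Corollary \ref{RN3.10} (which in turn rests on the full faithfulness of Lemma \ref{RN3.9}), both already available. The only point requiring care is the bookkeeping that applying $U_*$ to the enriched data recovers \emph{verbatim} the Section \ref{RNSection2.2} functors, and that the strict commutativity of the triangle forces the comparison transformation to be the identity, so that the enriched and non-enriched counits genuinely agree.
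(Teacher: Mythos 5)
Your proposal is correct and follows exactly the paper's route: the paper proves Proposition \ref{RN3.12} by citing Theorem \ref{RN2.6} together with the transfer principle of Corollary \ref{RN3.10}, which is precisely your argument, merely spelled out in more detail (including the harmless extra citation of Proposition \ref{RN2.7} and the observation that the counit is the identity). No gaps; your bookkeeping simply makes explicit what the paper leaves implicit.
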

\begin{proof}
This follows from Theorem \ref{RN2.6} together with Corollary \ref{RN3.10}. 
\end{proof}

The (non-enriched) functor $\RV_r:\textbf{Prob}\to \mathbf{CMet}$ from section \ref{RNSection2} induces a $\mathbf{CMet}$-enriched functor $\textbf{Prob}_r\to\mathbf{CMet}$. Indeed, the assignment $f\mapsto \RV_r(f)$ induces a \emph{$1$-Lipschitz map}
\[\mathrm{prob}(\bfomega_1,\bfomega_2)\to [\RV_r(\bfomega_1),\RV_r(\bfomega_2)].\]

To see this, consider two measure preserving maps $f_1,f_2:\bfomega_1\to \bfomega_2$. For $X\in \RV_r(\bfomega_1)$, consider the measurable subset $A^+:=\{\mathbb{E}[X\mid f_1]\geq \mathbb{E}[X\mid f_2]\}$ and let $A^-$ be its complement.\footnote{The subset $A^+$ should actually be defined as $\{g_1\geq g_2\}$ for some measurable maps $g_1:\Omega_2\to [0,1]$ and $g_2:\Omega_2\to [0,n]$ representing $\mathbb{E}[X\mid f_1]$ and $\mathbb{E}[X\mid f_2]$ respectively. However, everything that follows is independent from the choice of $g_1$ and $g_2$ and therefore we just write $\{\mathbb{E}[X\mid f_1]\geq \mathbb{E}[X\mid f_2]\}$.} We now find that \begin{align*}
    2 d_{\RV_r(\bfomega_2)}(\RV_r(f_1)(X),\RV_r(f_2)(X))&= \mathbb{E}[\lvert \mathbb{E}[X\mid f_1]-\mathbb{E}[X\mid f_2]\rvert]\\
    & = \mathbb{E}[(\mathbb{E}[X\mid f_1]-\mathbb{E}[X\mid f_2])1_{A^+}]+\mathbb{E}[(\mathbb{E}[X\mid f_2]-\mathbb{E}[X\mid f_1])1_{A^-}]\\
    &= \mathbb{E}[X(1_{f_1^{-1}(A^+)}-1_{f_2^{-1}(A^+)}+1_{f_2^{-1}(A^-)}-1_{f_1^{-1}(A^-)})]\\
    &\leq r (\mathbb{E}[\lvert 1_{f_1^{-1}(A^+)}-1_{f_2^{-1}(A^+)}\rvert ]+\mathbb{E}[\lvert 1_{f_1^{-1}(A^-)}-1_{f_2^{-1}(A^-)}\rvert ])\\
    &= r (\mathbb{P}(f_1^{-1}(A^+)\triangle f_2^{-1}(A^+))+\mathbb{P}(f_1^{-1}(A^-)\triangle f_2^{-1}(A^-)))\\
    &\leq 2r d_{\bfomega_1,\bfomega_2}(f_1,f_2)
    \end{align*}

By taking the supremum over all $X\in \RV_r(\bfomega_1)$, we see that the assignment $f\mapsto \RV_r(f)$ defines a $1$-Lipschitz maps. This induces a $1$-Lipschitz map $\mathbf{Prob}_r[\bfomega_1,\bfomega_2]\to [\RV_r(\bfomega_1,\RV_r(\bfomega_2)]$. The enriched functor $\mathbf{Prob}_r\to \mathbf{CMet}$ that we obtain will also be denoted by $\RV_r$.

The restriction to finite probability spaces is also denoted by $\RV_r^f$ and is an enriched functor since it is the composition of the enriched functors $\RV_r:\mathbf{Prob}_r\to \mathbf{CMet}$ and $i_r:\mathbf{Prob}^f_r\to \mathbf{Prob}_r$.

\begin{prop}\label{RN3.13}
The commutative triangle of \emph{enriched} functors 
\[\begin{tikzcd}
	{\mathbf{Prob}_r^f} && {\mathbf{CMet}} \\
	{\mathbf{Prob}_r} 
	\arrow["{\RV_r^f}", from=1-1, to=1-3]
	\arrow["{i_r}"', from=1-1, to=2-1]
	\arrow["{\RV_r}"', from=2-1, to=1-3]
\end{tikzcd}\]
exhibits $\RV_r$ as the right Kan extension of $\RV_r^f$ along $i_r$. 
\end{prop}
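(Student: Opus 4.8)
The plan is to imitate the proof of Proposition \ref{RN3.12} verbatim, substituting the random variables functor for the measures functor throughout. The entire purpose of Corollary \ref{RN3.10} is to reduce the verification of an \emph{enriched} right Kan extension to the verification of an ordinary one, provided that all functors involved are already known to be enriched. Here all three functors in the triangle are enriched: $i_r:\mathbf{Prob}_r^f\to \mathbf{Prob}_r$ is the enriched inclusion, while $\RV_r^f$ and $\RV_r$ were shown to be enriched in the discussion immediately preceding this proposition (via the $1$-Lipschitz estimate for the assignment $f\mapsto \RV_r(f)$, expressed through conditional expectations). Thus the enrichment hypotheses of Corollary \ref{RN3.10} are in place.

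It then remains to supply the non-enriched input that Corollary \ref{RN3.10} demands, namely a natural transformation $\epsilon:U_*\RV_r\circ U_*i_r\to U_*\RV_r^f$ exhibiting $U_*\RV_r$ as the ordinary right Kan extension of $U_*\RV_r^f$ along $U_*i_r$. But this is precisely the content of Theorem \ref{RN2.10}, which identifies $\mathrm{Ran}_i\RV_r^f$ with $\RV_r$ on objects, together with Proposition \ref{RN2.11}, which identifies its action on morphisms with conditional expectation; the counit of that ordinary Kan extension furnishes the required $\epsilon$. Here one uses that $U_*\mathbf{Prob}_r=\mathbf{Prob}$ (noted at the end of section \ref{RNSection3.2}) and $U_*i_r=i$, so that the underlying ordinary functors are exactly those studied in section \ref{RNSection2.3} and Theorem \ref{RN2.10} applies without modification.

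With both ingredients assembled, Corollary \ref{RN3.10} produces a unique enriched natural transformation $\tilde{\epsilon}:\RV_r\circ i_r\to \RV_r^f$ lifting $\epsilon$ and exhibiting $\RV_r$ as the enriched right Kan extension of $\RV_r^f$ along $i_r$, which is exactly the assertion.

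I do not expect a serious obstacle. All the analytic content (completeness via the Riesz--Fischer theorem, the Cauchy-net argument) was already discharged in the proof of Theorem \ref{RN2.10}, and the enrichment estimate was carried out just before the statement. The only point requiring care is the bookkeeping: one must confirm that the underlying ordinary functor of the enriched $\RV_r$ is literally the non-enriched $\RV_r$ of section \ref{RNSection2.3}, so that Theorem \ref{RN2.10} and Proposition \ref{RN2.11} transfer directly. Granting that identification, the proof collapses to a single invocation of Theorem \ref{RN2.10} and Corollary \ref{RN3.10}, precisely mirroring Proposition \ref{RN3.12}.
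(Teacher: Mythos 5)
Your proposal is correct and is essentially the paper's own proof, which simply cites Theorem \ref{RN2.10} together with Corollary \ref{RN3.10}; you have merely spelled out the bookkeeping (the enrichment of $\RV_r$ established just before the statement, and the identification $U_*\mathbf{Prob}_r=\mathbf{Prob}$) that the paper leaves implicit.
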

\begin{proof}
This follows from Theorem \ref{RN2.10} together with Corollary \ref{RN3.10}. 
\end{proof}

\subsection{The martingale convergence theorem}\label{RNSection3.4}
Let $H:\textbf{Prob}_r\to \mathbf{CMet}$ be an enriched functor such that the commutative triangle \[\begin{tikzcd}
	{\textbf{Prob}_r^f} && {\mathbf{CMet}} \\
	{\textbf{Prob}_r}
	\arrow["{H\circ i_r}", from=1-1, to=1-3]
	\arrow["{i_r}"', from=1-1, to=2-1]
	\arrow["H"', from=2-1, to=1-3]
\end{tikzcd}\]
exhibits $H$ as the right Kan extension of $H\circ i_r$ along $i_r$. 

Let $(\Omega, \left(\mathcal{F}_i\right)_{i\in I},\mathcal{F},\mathbb{P})$ be a filtered probability space. For $i\in I$, we write $\bfomega_i$ for $(\Omega,\mathcal{F}_i,\mathbb{P}\mid_{\mathcal{F}_i})$  and $\bfomega$ for $(\Omega, \mathcal{F},\mathbb{P})$. For $i\leq j$ in $I$ there is a measure-preserving map $f_{ij}:\bfomega_j\to \bfomega_i$ and for $i\in I$ there is a measure-preserving map $f_i:\bfomega\to \bfomega_i$. This induces a diagram $D_\bfomega$ in the underlying category of $\mathbf{Prob}_r$ of which $\bfomega$ is the conical limit. 

%Let $(\mathcal{F}_i)_{i\in I}$ and $\bfomega:=(\Omega,\mathcal{F},\mathbb{P})$ be such as in Lemma \ref{3.12} 3.12. This defines a functor $I\to \textbf{Prob}$ that sends $i$ to $\bfomega_i:=(\Omega,\mathcal{F}_i,\mathbb{P}\mid_{\mathcal{F}_i})$. The probability space $\bfomega$ is the limit of this diagram. We will denote this diagram as $D_\bfomega$.

In the case where $I=\mathbb{N}$, we have the following diagram.

\[\begin{tikzcd}
	{\bfomega_1} & {\bfomega_2} & {\bfomega_3} & \dots & {\bfomega}
	\arrow[from=1-2, to=1-1]
	\arrow[from=1-3, to=1-2]
	\arrow[from=1-4, to=1-3]
	\arrow[from=1-5, to=1-4]
\end{tikzcd}\]

\begin{lem}\label{RN3.14}
Let $(\Omega, \left(\mathcal{F}_i\right)_{i\in I},\mathcal{F},\mathbb{P})$ be a filtered probability space. Then for $E\in \mathcal{F}$, there exists a sequence $(E_n)_n$ in $\bigcup_{i\in I} \mathcal{F}_i$ such that \[\mathbb{P}(E\triangle E_n)\to 0.\]
\end{lem}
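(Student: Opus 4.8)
The plan is to use the \emph{good-sets principle}. First I would observe that the union $\mathcal{A}:=\bigcup_{i\in I}\mathcal{F}_i$ is an \emph{algebra} of subsets of $\Omega$: it contains $\emptyset$ and $\Omega$ and is closed under complementation because each $\mathcal{F}_i$ is, and it is closed under finite unions precisely because $I$ is \emph{directed}. Indeed, given $A\in\mathcal{F}_i$ and $B\in\mathcal{F}_j$, there is some $k\geq i,j$ with $A,B\in\mathcal{F}_k$, so $A\cup B\in\mathcal{F}_k\subseteq\mathcal{A}$. By the defining property of a filtration we moreover have $\sigma(\mathcal{A})=\mathcal{F}$.

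Next I would introduce the collection of \emph{approximable} sets
\[\mathcal{G}:=\{E\in\mathcal{F}\mid \forall \epsilon>0\ \exists A\in\mathcal{A}:\mathbb{P}(E\triangle A)<\epsilon\}.\]
Since trivially $\mathcal{A}\subseteq\mathcal{G}\subseteq\mathcal{F}=\sigma(\mathcal{A})$, the lemma reduces to showing that $\mathcal{G}$ is a $\sigma$-algebra: once this is known, $\mathcal{G}\supseteq\sigma(\mathcal{A})=\mathcal{F}$, forcing $\mathcal{G}=\mathcal{F}$. Then, for any $E\in\mathcal{F}$, choosing for each $n$ some $E_n\in\mathcal{A}$ with $\mathbb{P}(E\triangle E_n)<\tfrac1n$ produces the desired sequence with $\mathbb{P}(E\triangle E_n)\to 0$, each $E_n$ lying in $\bigcup_{i\in I}\mathcal{F}_i$ by construction.

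The verification that $\mathcal{G}$ is a $\sigma$-algebra rests on two elementary facts about symmetric differences: $\mathbb{P}(E^c\triangle A^c)=\mathbb{P}(E\triangle A)$, giving closure under complements, and the inclusion $(E\cup F)\triangle(A\cup B)\subseteq(E\triangle A)\cup(F\triangle B)$, giving closure under finite unions (approximate $E$ and $F$ within $\epsilon/2$ each). For a countable union $E=\bigcup_n E_n$ of sets in $\mathcal{G}$, I would first invoke continuity from below of $\mathbb{P}$ to pick $N$ with $\mathbb{P}\bigl(E\setminus\bigcup_{n\leq N}E_n\bigr)<\epsilon/2$, then approximate the finite union $\bigcup_{n\leq N}E_n\in\mathcal{G}$ by some $A\in\mathcal{A}$ within $\epsilon/2$, and finally combine the two estimates using the triangle inequality for the pseudometric $(E,F)\mapsto\mathbb{P}(E\triangle F)$.

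The only genuinely substantive step is this reduction from countable to finite unions via continuity of the measure; all the remaining verifications are formal manipulations of symmetric differences. It is worth noting that directedness of $I$ is used solely to guarantee that $\mathcal{A}$ is an algebra, and that the argument makes no reference to the enriched structure of $\mathbf{Prob}_r$.
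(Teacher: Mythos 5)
Your proof is correct and follows essentially the same route as the paper: you make the identical key observation that directedness of $I$ makes $\bigcup_{i\in I}\mathcal{F}_i$ an algebra generating $\mathcal{F}$, after which the paper simply cites the classical approximation theorem (Theorem D, Section 13 of Halmos) that you instead prove inline via the good-sets argument. Your self-contained verification that the approximable sets form a $\sigma$-algebra is sound --- in particular the reduction of countable unions to finite ones via continuity from below is exactly the substantive step in the cited result --- so the only difference is that you unpack the citation.
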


\begin{proof}
Note that $\bigcup_{i\in I}\mathcal{F}_i$ is closed under complements and \emph{finite} intersections and unions, because $I$ is directed. The claim now immediately follows from Theorem D in Section 13 in \cite{halmos}.
\end{proof}

\begin{cor}\label{RN3.15}
Let $\mathbf{A}$ be a finite probability space. The enriched functor $\mathbf{Prob}_r(-,\mathbf{A}):\mathbf{Prob}_r\to \mathbf{CMet}^\text{op}$ preserves the limit of $D_\bfomega$.
\end{cor}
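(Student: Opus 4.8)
The plan is to unwind what preservation of the limit means and then verify it by an isometry-plus-density argument. Since $\mathbf{Prob}_r(-,\mathbf{A})$ is contravariant, landing in $\mathbf{CMet}^{\mathrm{op}}$, saying that it preserves the limit $\bfomega=\lim D_\bfomega$ amounts to exhibiting $\mathbf{Prob}_r(\bfomega,\mathbf{A})$ as the \emph{colimit} in $\mathbf{CMet}$ of the directed system $\left(\mathbf{Prob}_r(\bfomega_i,\mathbf{A})\right)_{i\in I}$, whose transition maps are precomposition with the $f_{ij}$ and whose comparison cocone is precomposition with the $f_i$. By Proposition \ref{RN3.5} a colimit in $\mathbf{CMet}$ is computed by taking the colimit $C$ in $\mathbf{Met}$ and completing it, and since $\mathbf{A}$ is finite, Proposition \ref{RN3.11} tells us that $\mathbf{Prob}_r(\bfomega,\mathbf{A})$ is \emph{already} complete. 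Hence it suffices to show that the canonical $1$-Lipschitz comparison map $\psi\colon C\to \mathbf{Prob}_r(\bfomega,\mathbf{A})$ is an isometric embedding with dense image: completing then identifies $\overline{C}$ with $\mathbf{Prob}_r(\bfomega,\mathbf{A})$.

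First I would check that $\psi$ is distance-preserving. The directed colimit $C$ carries the metric $d_C([g_i],[g_j])=\lim_{k}d_{\bfomega_k,\mathbf{A}}(g_i f_{ik},g_j f_{jk})$, the sequence being non-increasing in $k$ because the transition maps are $1$-Lipschitz. The key observation is that every $f_{ij}$ and every $f_i$ is the identity on the underlying set $\Omega$ — it merely coarsens the $\sigma$-algebra — so for $g_i\in\mathbf{Prob}_r(\bfomega_i,\mathbf{A})$ the preimage $g_i^{-1}(A')$ is unchanged, as a subset of $\Omega$, under precomposition. Consequently $\mathbb{P}\!\left((g_if_{ik})^{-1}(A')\triangle(g_jf_{jk})^{-1}(A')\right)$ does not depend on $k$ and equals $\mathbb{P}\!\left(g_i^{-1}(A')\triangle g_j^{-1}(A')\right)$, so $d_C([g_i],[g_j])=d_{\bfomega,\mathbf{A}}(g_if_i,g_jf_j)$. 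Thus $\psi$ preserves distances and is in particular injective.

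The remaining, and essential, point is that $\psi$ has dense image. Its image is exactly the set of measure-preserving maps $\bfomega\to\mathbf{A}$ that factor through some stage $\bfomega_i$. Given an arbitrary measure-preserving $g\colon\bfomega\to\mathbf{A}$ and $\epsilon>0$, I would apply Lemma \ref{RN3.14} to each of the finitely many sets $g^{-1}(\{a\})$, $a\in A$, obtaining approximations in $\bigcup_{i\in I}\mathcal{F}_i$; using that $I$ is directed and $A$ finite, all of these can be taken inside a single $\mathcal{F}_i$. Disjointifying and assigning the leftover, one gets an $\mathcal{F}_i$-measurable partition $(E_a)_{a\in A}$ with $\sum_{a}\mathbb{P}\!\left(E_a\triangle g^{-1}(\{a\})\right)$ small, which defines a map $g_i\colon\bfomega_i\to\mathbf{A}$ with $g_if_i$ close to $g$.

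The main obstacle is precisely turning these approximating sets into an \emph{honest morphism} of $\mathbf{Prob}_r$. A morphism into $\mathbf{A}=(A,p)$ must be \emph{exactly} measure-preserving, i.e. the partition must satisfy $\mathbb{P}(E_a)=p_a$ on the nose, whereas Lemma \ref{RN3.14} only controls the sets up to a small error. Correcting the $(E_a)$ to have the exact target masses $p_a$ while keeping the $L^1$-error small requires redistributing a small amount of probability among the classes inside $\mathcal{F}_i$, which in turn may force passing to a finer $\mathcal{F}_j$ with $j\geq i$ in order to have subsets of the prescribed measures available. I would isolate this measure-theoretic correction as a separate lemma; once it is in place, the isometry of the previous paragraph upgrades the density statement to the desired colimit identification, and the naturality of the cocone is routine.
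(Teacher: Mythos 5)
Your reduction is the same one the paper uses, executed more carefully. The paper's proof (which treats only the case $\mathbf{A}=\mathbf{2}$) asserts without comment that it suffices to show $\bigcup_{i\in I}\mathbf{Prob}_r(\bfomega_i,\mathbf{2})$ is dense in $\mathbf{Prob}_r(\bfomega,\mathbf{2})$, and then invokes Lemma \ref{RN3.14}; your opening steps supply exactly what that assertion tacitly presupposes: that the directed colimit in $\mathbf{CMet}$ is the completion of the $\mathbf{Met}$-colimit (Proposition \ref{RN3.5}), that the codomain is already complete (Proposition \ref{RN3.11}), and that the comparison map is an isometric embedding because the $f_{ij}$ and $f_i$ are identities on underlying sets. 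Your handling of a general finite $\mathbf{A}$ (approximating each $g^{-1}(\{a\})$, using directedness of $I$ to land in a single $\mathcal{F}_i$, then disjointifying) is also a correct elaboration of what the paper compresses into the two-point case.

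The gap is the deferred ``correction lemma,'' and it is not a routine patch: in the stated generality it is \emph{false}, so no separate lemma can close it. Take $\Omega=[0,1]$ with Lebesgue measure, $\mathcal{F}_n$ generated by the dyadic intervals of length $2^{-n}$, and $\mathbf{A}=(\{0,1\},p)$ with $p_1=1/3$. Every set in $\mathcal{F}_n$ has dyadic rational measure, so $\mathbf{Prob}_r(\bfomega_n,\mathbf{A})$ is \emph{empty} for every $n$ (and remains empty after completion), while $\mathbf{Prob}_r(\bfomega,\mathbf{A})$ is not; no redistribution of mass inside any $\mathcal{F}_j$ can produce the exact values $p_a$, so the colimit is empty and density fails outright. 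You have in fact put your finger on precisely the point the paper's own proof elides: it claims each approximating set $E_m$ ``can be identified by a measure preserving map $f_m:\bfomega_{i_n}\to\mathbf{2}$,'' which requires $\mathbb{P}(E_m)=\mathbb{P}(E)$ on the nose, something Lemma \ref{RN3.14} does not provide. The statement (and your correction step) can be rescued under an additional hypothesis, e.g.\ that each $(\Omega,\mathcal{F}_i,\mathbb{P})$ is atomless: then one can trim or augment the $E_a$ within $\mathcal{F}_i$ to the exact masses $p_a$, with $L^1$-error controlled by the discrepancies $\lvert\mathbb{P}(E_a)-p_a\rvert$, and your argument goes through. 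As written, though, the proposal's final step is a genuine gap --- one it shares with the paper.
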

\begin{proof}

 We will only prove this for the case that $\mathbf{A}$ is a probability space with two elements. We will write $\mathbf{2}$ instead of $\mathbf{A}$. It is enough to show that \[\bigcup_{i\in I}\textbf{Prob}_r(\mathbf{\Omega}_i,\textbf{2})\subseteq \textbf{Prob}_r(\mathbf{\Omega},\textbf{2})\] 
    is a dense subset. 

By Proposition \ref{RN3.11}, we know that an element of $\mathbf{Prob}_r(\bfomega,\textbf{2})$ is a measure-preserving map $f:\bfomega\to \textbf{2}$. Let $f:\mathbf{\Omega}\to \mathbf{2}$ be an element of $\mathbf{Prob}_r(\mathbf{\Omega},\mathbf{2})$. This can be identified by measurable subset $E$ of $\Omega$. By Lemma \ref{RN3.14}, there is a sequence $(E_m)_m$ in $\bigcup_{i\in I}\mathcal{F}_i$ such that $\mathbb{P}(E\triangle E_n)\to 0$. Every $E_m$ can be identified by a measure preserving map $f_m:\mathbf{\Omega}_{i_n}\to \textbf{2}$. Because $d_{\Omega,2}(f,f_m)=r\mathbb{P}(E\triangle E_m)$, the result now follows. 
\end{proof}
\begin{thm}\label{RN3.16}
    The functor $H:\mathbf{Prob}_r\to \mathbf{CMet}$ preserves the limit of $D_\bfomega: I\to \mathbf{Prob}$.
\end{thm}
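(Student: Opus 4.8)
The plan is to exploit that $H$, being a $\mathbf{CMet}$-enriched right Kan extension into the complete closed monoidal category $\mathbf{CMet}$, is computed \emph{pointwise} by an end, and then to commute the $I$-indexed limit past that end, feeding in Corollary \ref{RN3.15} at the one decisive step.

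First I would record the pointwise formula. Since $\mathbf{CMet}$ is complete (Proposition \ref{RN3.3}) and closed (Proposition \ref{RN3.7}), it is cotensored over itself (the cotensor of $Y$ by $V$ being $[V,Y]$, which is complete by Proposition \ref{RN3.6}), so all weighted limits exist and the right Kan extension exhibited by the hypothesis is pointwise. Hence for every probability space $\bfomega'$,
\begin{equation}\label{endform}
H(\bfomega')=\int_{\mathbf{A}\in \mathbf{Prob}_r^f}\left[\mathbf{Prob}_r(\bfomega',i_r\mathbf{A}),\,H(i_r\mathbf{A})\right].
\end{equation}
In particular \eqref{endform} holds for $\bfomega$ and for each $\bfomega_i$. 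The maps $H(f_i)\colon H(\bfomega)\to H(\bfomega_i)$ and $H(f_{ij})\colon H(\bfomega_j)\to H(\bfomega_i)$ turn $H$ applied to the limit cone of $D_\bfomega$ into a cone over $H\circ D_\bfomega$, hence into a canonical $1$-Lipschitz comparison map $c\colon H(\bfomega)\to \lim_i H(\bfomega_i)$. It suffices to show $c$ is an isomorphism.

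Next I would compute the right-hand side. Ends are limits and limits commute with limits (Fubini for ends, valid since every limit in sight exists in the complete category $\mathbf{CMet}$), so the $I$-indexed limit passes inside the end:
\begin{equation}\label{commute}
\lim_i H(\bfomega_i)=\lim_i\int_{\mathbf{A}}\left[\mathbf{Prob}_r(\bfomega_i,i_r\mathbf{A}),\,H(i_r\mathbf{A})\right]\cong \int_{\mathbf{A}}\lim_i\left[\mathbf{Prob}_r(\bfomega_i,i_r\mathbf{A}),\,H(i_r\mathbf{A})\right].
\end{equation}
For fixed $\mathbf{A}$ the internal hom $[-,H(i_r\mathbf{A})]\colon \mathbf{CMet}^{\mathrm{op}}\to \mathbf{CMet}$ carries colimits in its first variable to limits (a consequence of the closed structure of Proposition \ref{RN3.7}), so
\begin{equation}\label{homcont}
\lim_i\left[\mathbf{Prob}_r(\bfomega_i,i_r\mathbf{A}),\,H(i_r\mathbf{A})\right]\cong \left[\operatorname{colim}_i\mathbf{Prob}_r(\bfomega_i,i_r\mathbf{A}),\,H(i_r\mathbf{A})\right],
\end{equation}
where the colimit is the conical colimit along the directed poset $I$, computed in $\mathbf{CMet}$ (Proposition \ref{RN3.5}). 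Corollary \ref{RN3.15} says precisely that $\mathbf{Prob}_r(-,i_r\mathbf{A})$ sends the limit $\bfomega=\lim D_\bfomega$ to a colimit, i.e. the canonical map $\operatorname{colim}_i\mathbf{Prob}_r(\bfomega_i,i_r\mathbf{A})\to \mathbf{Prob}_r(\bfomega,i_r\mathbf{A})$ (induced by precomposition with the $f_i$) is an isomorphism. Substituting this into \eqref{homcont}, then into \eqref{commute}, and comparing with \eqref{endform} evaluated at $\bfomega$ identifies $\lim_i H(\bfomega_i)$ with $H(\bfomega)$.

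The main obstacle is twofold and both parts are bookkeeping rather than new mathematics. First, one must genuinely justify that the right Kan extension guaranteed by the hypothesis is \emph{pointwise}, so that \eqref{endform} is legitimate; this rests on $\mathbf{CMet}$ being complete and cotensored, as above. Second, one must verify that the abstract isomorphism assembled from \eqref{commute}–\eqref{homcont} is inverse to the canonical comparison $c$, not merely some isomorphism of objects: this follows by tracking that the transition maps of the colimit in \eqref{homcont} are precomposition with the $f_{ij}$ and that its cocone legs are precomposition with the $f_i$, which are exactly the components used to build $c$ through \eqref{endform}. The only measure-theoretic input enters once, through Corollary \ref{RN3.15}; everything else is formal enriched category theory.
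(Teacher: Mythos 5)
Your proposal is correct and takes essentially the same route as the paper: the paper writes $H(\bfomega)\cong\{\mathbf{Prob}_r(\bfomega,i_r-),H\circ i_r\}$ as a (pointwise) weighted limit, applies Corollary \ref{RN3.15} to replace the weight by $\mathrm{colim}_i\mathbf{Prob}_r(\bfomega_i,i_r-)$, and uses the cocontinuity of $\{-,H\circ i_r\}$ in the weight to pull out $\lim_i$. Your end formula $\int_{\mathbf{A}}[\mathbf{Prob}_r(\bfomega,i_r\mathbf{A}),H(i_r\mathbf{A})]$ is exactly that weighted limit unpacked, and your Fubini-plus-hom-continuity steps are the same cocontinuity property spelled out, with the added (welcome) care of justifying pointwiseness and identifying the comparison map.
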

\begin{proof}
    Since $H$ is the right Kan extension of $H\circ i_r$ along $i_r$, it can be represented as a weighted limit, as explained in section 4.1 in \cite{kelly}. \[H(\mathbf{\Omega})\cong\left\{\textbf{Prob}_r(\mathbf{\Omega},i_r-),H\circ i_r\right\}.\] Using Corollary \ref{RN3.15} and using properties of weighted limits we find \begin{align*}
    H(\mathbf{\Omega})& =\left\{\textbf{Prob}_r(\mathbf{\Omega},i_r-),H\circ i_r\right\}\\
    &= \left\{\text{colim}_i\textbf{Prob}_r(\mathbf{\Omega}_i,i_r-),H\circ i_r\right\}\\
     &= \lim_i\left\{\textbf{Prob}_r(\mathbf{\Omega}_i,i_r-),H\circ i_r\right\}\\
     &=\lim_iH(\mathbf{\Omega}_i)
\end{align*} \end{proof} By Proposition \ref{RN3.12} and Proposition \ref{RN3.13}, we know that both $\M_r$ and $\RV_r$ are right Kan extensions of their restrictions to $\textbf{Prob}_r^f$ along $i_r:\mathbf{Prob}_r^f\to \mathbf{Prob}_r$. Therefore we can now apply Theorem \ref{RN3.16} to the functors $\M_r$ and $\RV_r$. 

The functor $\RV_r$ preserves the limit of $D_\bfomega$. The limit of $\RV_rD_\bfomega$ can be constructed in the usual way we construct cofiltered limits in $\mathbf{CMet}$.

The underlying set of the limit of $\mathrm{RV}_r D_\bfomega$ is given by $$\left\{ (X_i)_{i\in I}\in \prod_{i\in I}\RV_r(\bfomega_i) \mid \RV_r(f_{ij})(X_j)=X_i \text{ for all }i\leq j\right\}$$
which is equal to $$\left\{(X_i)_{i\in I}\in \prod_{i\in I}\RV_r(\bfomega_i)\mid \mathbb{E}[X_j\mid f_{ij}]=X_i \right\}.$$ This means that the underlying set of $\lim\RV_rD_\bfomega$ is precisely the collection of martingales, uniformly bounded by $r$, on the filtered probability space $(\Omega,\left(\mathcal{F}_i\right)_{i\in I},\mathcal{F},\mathbb{P})$. Theorem \ref{RN3.16} now says that the map $$\RV_r(\bfomega)\to \lim_i \RV_r(\bfomega_i)$$ defined by the assignment $$X\mapsto \left(\RV_r(f_i)(X)\right)_{i\in I}= \left( \mathbb{E}[X\mid f_i]\right)_{i\in I}$$ is an isomorphism. In other words, for every martingale $(X_i)_i$ there is a $\mathbb{P}$-almost surely unique random variable $X\in \RV_r(\bfomega)$ such that $$\mathbb{E}[X\mid f_i]=X_i.$$ This proves, \emph{categorically}, the following weaker martingale convergence theorem. 

\begin{thm}\label{RN3.17}
    Let $(X_i)_{i\in I}$ be a martingale such that for all $i\in I$, $$\mathbb{P}(X_i\leq r)=1.$$
    Then there exists a unique $X\in \RV_r(\bfomega)$ such that for all $i\in I$, $$\mathbb{E}[X\mid f_i]=X_i.$$
\end{thm}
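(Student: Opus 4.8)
The plan is to obtain this as an immediate consequence of Theorem \ref{RN3.16} applied to the functor $H=\RV_r$. First I would invoke Proposition \ref{RN3.13}, which exhibits $\RV_r$ as the right Kan extension of $\RV_r^f$ along $i_r$; this is exactly the hypothesis under which Theorem \ref{RN3.16} holds, so $\RV_r$ preserves the limit of $D_\bfomega$. Concretely, this says that the canonical comparison map
\[
\RV_r(\bfomega)\longrightarrow \lim_i \RV_r(\bfomega_i)
\]
is an isomorphism in $\mathbf{CMet}$.

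Next I would identify both the map and its codomain explicitly. The comparison map is the one induced by the cone $(\RV_r(f_i))_i$, so it sends $X\in\RV_r(\bfomega)$ to the family $(\RV_r(f_i)(X))_i$; by Proposition \ref{RN2.11} each component is $\RV_r(f_i)(X)=\mathbb{E}[X\mid f_i]$, so the map is precisely $X\mapsto(\mathbb{E}[X\mid f_i])_i$. For the codomain, the cofiltered limit in $\mathbf{CMet}$ is computed on underlying sets as in Proposition \ref{RN3.3}, so its elements are the compatible families $(X_i)_i\in\prod_i\RV_r(\bfomega_i)$ satisfying $\RV_r(f_{ij})(X_j)=X_i$ for all $i\leq j$. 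Applying Proposition \ref{RN2.11} once more rewrites this constraint as $\mathbb{E}[X_j\mid f_{ij}]=X_i$, which is exactly the condition defining a martingale uniformly bounded by $r$.

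With these identifications in hand, the theorem is just a restatement of the bijectivity of the comparison map. Given a martingale $(X_i)_i$ with $\mathbb{P}(X_i\leq r)=1$ for all $i$, it is an element of $\lim_i\RV_r(\bfomega_i)$, and surjectivity of the isomorphism produces an $X\in\RV_r(\bfomega)$ mapping to it, i.e. with $\mathbb{E}[X\mid f_i]=X_i$ for all $i$; injectivity gives the $\mathbb{P}$-almost sure uniqueness of $X$.

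I expect the only real subtlety to be bookkeeping rather than mathematics: all of the analytic work — the completeness of $\RV_r(\bfomega)$ and the convergence of the relevant net — has already been absorbed into Theorem \ref{RN2.10} and Theorem \ref{RN3.16}, so the sole task here is to verify that the abstract comparison map of the Kan-extension and weighted-limit formalism coincides on the nose with the conditional-expectation map $X\mapsto(\mathbb{E}[X\mid f_i])_i$, which is precisely what Proposition \ref{RN2.11} guarantees.
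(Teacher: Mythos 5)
Your proposal is correct and is essentially identical to the paper's own argument: the paper likewise applies Theorem \ref{RN3.16} to $H=\RV_r$ (with the hypothesis supplied by Proposition \ref{RN3.13}), identifies the underlying set of $\lim_i\RV_r(\bfomega_i)$ with the martingales uniformly bounded by $r$, and uses Proposition \ref{RN2.11} to recognize the comparison map as $X\mapsto(\mathbb{E}[X\mid f_i])_{i\in I}$, so that bijectivity yields existence and uniqueness exactly as you describe. No gaps.
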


Theorem \ref{RN3.16} also implies that the functor $\M_r$ preserves the limit of $D_\bfomega$. Also for this functor we can construct the cofiltered limit $\lim\M_rD_\bfomega$ in the usual way; its underlying set is given by $$\left\{(\mu_i)_{i\in I}\in \prod_{i\in I}\M_r(\bfomega_i)\mid \M_r(f_{ij})(\mu_j)=\mu_i \text{ for all }i\leq j\right\}$$
which is equal to $$\left\{(\mu_i)_{i\in I}\in \prod_{i\in I}\M_r(\bfomega_i)\mid \mu_j\mid_{\mathcal{F}_i}=\mu_i \text{ for all } i\leq j \right\}$$

Theorem \ref{RN3.16} says that the map $$\M_r(\bfomega)\to \lim_i\M_r(\bfomega_i)$$ defined by the assignment $$\mu \mapsto (\M_r(f_i)(\mu))_{i\in I} = \left(\mu\mid_{\mathcal{F}_i}\right)_{i\in I}$$ is an isomorphism. Therefore, for every family $(\mu_i)_{i\in I}$ of measures, where $\mu_i\in \M_r(\bfomega_i)$ such that for all $i\leq j$, $$\mu_i\mid_{\mathcal{F}_j}=\mu_j,$$
there exists a unique $\mu \in \M_r(\bfomega)$ such that $$\mu\mid_{\mathcal{F}_i}=\mu_i.$$ This gives a categorical proof for the following version of the Kolmogorov extension theorem.

\begin{thm}\label{RN3.18}
Consider a family $(\mu_i)_{i\in I}$ such that $\mu_i$ is a measure on $\bfomega_i$ and $\mu_i\leq r\mathbb{P}$. Suppose that for all $i\leq j$, $$\mu_j\mid_{\mathcal{F}_i}=\mu_i.$$
Then there exist a unique measure $\mu$ on $\bfomega$ with $\mu \leq r\mathbb{P}$ such that for all $i\in I$, $$\mu\mid_{\mathcal{F}_i}=\mu_i.$$
\end{thm}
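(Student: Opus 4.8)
The plan is to obtain this statement as the measure-theoretic shadow of the fact that the enriched functor $\M_r$ preserves the limit of $D_\bfomega$. First I would check that $\M_r$ satisfies the standing hypothesis of Section \ref{RNSection3.4}: by Proposition \ref{RN3.12} the commutative triangle $\M_r^f = \M_r \circ i_r$ exhibits $\M_r$ as the right Kan extension of $\M_r^f$ along $i_r$, which is exactly the condition required of the functor $H$ in Theorem \ref{RN3.16}. Hence Theorem \ref{RN3.16}, applied with $H = \M_r$, yields that $\M_r$ preserves the limit of $D_\bfomega$; equivalently, since $\bfomega = \lim D_\bfomega$ in $\mathbf{Prob}_r$, the cone $(\M_r(f_i))_{i\in I}$ exhibits $\M_r(\bfomega)$ as $\lim_i \M_r(\bfomega_i)$ in $\mathbf{CMet}$.

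Next I would compute this cofiltered limit concretely, using the description of limits in $\mathbf{CMet}$ from Proposition \ref{RN3.3}. Its underlying set is the set of compatible families $(\mu_i)_{i\in I} \in \prod_{i\in I}\M_r(\bfomega_i)$ with $\M_r(f_{ij})(\mu_j) = \mu_i$ for all $i \le j$. By Proposition \ref{RN2.7} we have $\M_r(f_{ij})(\mu_j) = \mu_j \circ f_{ij}^{-1} = \mu_j\mid_{\mathcal{F}_i}$, so the compatibility condition is precisely $\mu_j\mid_{\mathcal{F}_i} = \mu_i$, and the limiting isomorphism $\M_r(\bfomega) \to \lim_i \M_r(\bfomega_i)$ is the restriction map $\mu \mapsto (\mu\mid_{\mathcal{F}_i})_{i\in I}$.

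To finish, I would translate the bijectivity of this map into the claim. A family $(\mu_i)_{i\in I}$ as in the hypothesis satisfies $\mu_i \le r\mathbb{P}\mid_{\mathcal{F}_i}$, hence $\mu_i \in \M_r(\bfomega_i)$, and the compatibility $\mu_j\mid_{\mathcal{F}_i} = \mu_i$ says exactly that $(\mu_i)_{i\in I}$ lies in $\lim_i \M_r(\bfomega_i)$. Surjectivity of the restriction isomorphism then produces a $\mu \in \M_r(\bfomega)$ --- that is, a measure on $\bfomega$ with $\mu \le r\mathbb{P}$ --- satisfying $\mu\mid_{\mathcal{F}_i} = \mu_i$ for every $i$, while injectivity gives its uniqueness.

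The only genuine work sits inside Theorem \ref{RN3.16}, and in particular in Corollary \ref{RN3.15}, whose proof reduces the preservation statement to the density of $\bigcup_{i\in I}\mathbf{Prob}_r(\bfomega_i,\mathbf{2})$ in $\mathbf{Prob}_r(\bfomega,\mathbf{2})$; this density is the point where the filtration hypothesis enters, via the approximation result of Lemma \ref{RN3.14}. I expect the main obstacle to be bookkeeping rather than conceptual: once preservation of the limit is granted, the remaining steps are the routine identification of the categorical limit with compatible families of measures and of $\M_r(f_{ij})$ with restriction along the filtration.
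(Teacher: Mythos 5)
Your proposal is correct and follows essentially the same route as the paper: the paper likewise obtains Theorem \ref{RN3.18} by applying Theorem \ref{RN3.16} to $H=\M_r$ (justified by Proposition \ref{RN3.12}), identifying the underlying set of $\lim_i\M_r(\bfomega_i)$ with the compatible families $(\mu_i)_i$ satisfying $\mu_j\mid_{\mathcal{F}_i}=\mu_i$, and reading off existence and uniqueness from the isomorphism $\mu\mapsto(\mu\mid_{\mathcal{F}_i})_{i\in I}$. Your explicit appeal to Proposition \ref{RN2.7} to identify $\M_r(f_{ij})$ with restriction makes a step the paper leaves implicit.
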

\begin{rem}\label{RN3.19}
Theorem \ref{RN3.16} implies an even stronger result than Theorem \ref{RN3.17} and \ref{RN3.18}. It not only says that for every martingale $(X_i)_i$ there exists a random variable $X$ such that $\mathbb{E}[X\mid f_i]=X_i$, but it says that this happens in an \emph{isometric} way. In other words, $\sup_id(X_i,Y_i)=d(X,Y)$ for martingales $(X_i)_i$ and $(Y_i)_i$ and their corresponding limiting random variables $X$ and $Y$. In a similar way, we have that the Kolmogorov extension from Theorem \ref{RN3.18} is isometric too. 

Furthermore, for a \emph{consistent} family of measures $(\mu_i)_i$ and its limiting measure $\mu$ such as in Theorem \ref{RN3.17}, the collection of Radon-Nikodym derivates $\left(\frac{\text{d}\mu_i}{\text{d}\mathbb{P}}\right)_i$ form a martingale and the limiting random variable of this martingale is the Radon-Nikodym derivative of $\mu$ with respect to $\mathbb{P}$. 
\end{rem}
\begin{rem}\label{RN3.20}
An alternative approach we could have taken is to define the category $\mathbf{Prob}$ as the category of probability spaces and \emph{equivalence classes} of almost surely equal measure-preserving maps. Using this approach, we would not have to deal with the \emph{pseudo}metric spaces. However, because  not every measurable space has a measurable diagonal, sets of the form $\{f=g\}$ are not necessarily measurable. We would therefore need to use outer measures to define the concept of almost surely equal maps. 
\end{rem}

\begin{rem}[Future work]\label{RN3.21}
Since Theorem \ref{RN3.16} holds for an arbitrary enriched functor $H$, such that it is the right extension of its restriction to $\mathbf{Prob}_r^f$ along $i_r$, we expect that this result can be used to obtain categorical proofs of other martingale convergence theorem and extensions results.

Moreover, we are interested in investigating to what extent we can replace $[0,1]$ and $[0,\infty)$ by arbitrary algebras of probability monads and measure monads on categories of complete metric spaces. This would lead to a martingale theory of generalized random variables, taking values in an algebra of a certain probability monad or measure monad.
\end{rem}
\newpage
\begin{appendices}
\section{Inequalities in the proof of Theorem \ref{RN2.10}}
\begin{lem}\label{appendix}
Let $(\Omega,\mathcal{F},\mathbb{P})$ be a probability space and let $(A,p)$ and $(B,q)$ be finite probability spaces. Let $f:(\Omega,\mathcal{F},\mathbb{P})\to (A,p)$ and $g:(\Omega,\mathcal{F},\mathbb{P})\to (B,q)$ be measure-preserving maps. Let $s:(A,p)\to (B,q)$ be a measure-preserving map such that $sf=g$. Let $s_g^y$ and $s_f^y$ be such as in Theorem \ref{RN2.10}. The following equalities hold.
\begin{enumerate}[label=(\roman*)]
    \item \label{i}$s_g^ys_f^y=\sum_{b\in B}q_g(y)(b)\sum_{a\in s^{-1}(b)}q_f(y)(a)1_{f^{-1}(a)}.$
    \item \label{ii}$\mathbb{E}[s_f^ys_g^y]=\sum_{b\in B}q_g(y)(b)^2q_b$.
    \item \label{iii}$(s_g^y)^2=\sum_{b\in B}q_g(y)(b)^21_{g^{-1}(b)}$ and $(s_f^y)^2=\sum_{a\in A}q_f(y)(a)^21_{g^{-1}(a)}$.
    \item \label{iv}$\mathbb{E}[(s_g^y)^2]=\sum_{b\in B}q_g(y)(b)^2q_b$ and $\mathbb{E}[(s_f^y)^2]=\sum_{a\in A}q_f(y)(a)^2p_a$.
    \item \label{v}$\mathbb{E}[(s_g^y)^2] \leq \mathbb{E}[(s_f^y)^2]$.
    \item \label{vi} $\mathbb{E}[(s_f^y)^2]-\mathbb{E}[(s_g^y)^2]=\mathbb{E}[(s_f^y-s_g^y)^2]$.
\end{enumerate}
\end{lem}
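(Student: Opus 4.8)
The plan is to isolate one relation coming from the cone structure and deduce all six statements from it by elementary indicator-function algebra. Since $sf=g$, the map $s$ is a morphism $(f)\to(g)$ in $\bfomega\downarrow i$, so the cone condition for $(Y,(q_f)_f)$ reads $\RV_r^f(s)(q_f(y))=q_g(y)$. Unwinding the definition of $\RV_r^f(s)$ turns this into
$$q_g(y)(b)\,q_b=\sum_{a\in s^{-1}(b)}p_a\,q_f(y)(a)\qquad(\ast)$$
for every $b\in B$; when $q_b=0$ both sides vanish, since $s$ being measure-preserving forces $p_a=0$ for all $a\in s^{-1}(b)$. This identity $(\ast)$ is the engine of the lemma.

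First I would record the facts used throughout: $\mathbb{E}[1_{f^{-1}(a)}]=p_a$ and $\mathbb{E}[1_{g^{-1}(b)}]=q_b$ because $f,g$ are measure-preserving, and $1_{g^{-1}(b)}=\sum_{a\in s^{-1}(b)}1_{f^{-1}(a)}$ because $g=sf$ decomposes each fibre as $g^{-1}(b)=\bigsqcup_{a\in s^{-1}(b)}f^{-1}(a)$. Then \ref{i} drops out of multiplying the defining sums for $s_g^y$ and $s_f^y$ and using disjointness of the $f$-fibres ($1_{f^{-1}(a)}1_{f^{-1}(a')}=0$ for $a\neq a'$). Taking expectations in \ref{i}, substituting $p_a$ for $\mathbb{E}[1_{f^{-1}(a)}]$, and applying $(\ast)$ to collapse the inner sum gives \ref{ii}. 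Statement \ref{iii} is the same disjointness observation applied separately to $s_g^y$ and to $s_f^y$, and \ref{iv} is \ref{iii} under expectation.

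For \ref{v} I would read $(\ast)$ as saying that, for $b$ with $q_b>0$, the value $q_g(y)(b)$ is the weighted average $\frac{1}{q_b}\sum_{a\in s^{-1}(b)}p_a\,q_f(y)(a)$ of the $q_f(y)(a)$ with weights $p_a/q_b$ that sum to $1$. Convexity of $t\mapsto t^2$ then yields $q_g(y)(b)^2q_b\le\sum_{a\in s^{-1}(b)}p_a\,q_f(y)(a)^2$ for each such $b$ (and both sides are $0$ when $q_b=0$); summing over $b$ and using that the fibres partition $A$ gives $\sum_b q_g(y)(b)^2q_b\le\sum_a p_a\,q_f(y)(a)^2$, which is \ref{v} after identifying the two sides via \ref{iv}. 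Finally \ref{vi} is a one-line expansion: $\mathbb{E}[(s_f^y-s_g^y)^2]=\mathbb{E}[(s_f^y)^2]-2\mathbb{E}[s_f^ys_g^y]+\mathbb{E}[(s_g^y)^2]$, and \ref{ii} together with \ref{iv} shows $\mathbb{E}[s_f^ys_g^y]=\mathbb{E}[(s_g^y)^2]=\sum_b q_g(y)(b)^2q_b$, so the cross term absorbs the last term and leaves $\mathbb{E}[(s_f^y)^2]-\mathbb{E}[(s_g^y)^2]$.

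I do not anticipate a real obstacle once $(\ast)$ is in hand: parts \ref{i}--\ref{iv} and \ref{vi} are pure bookkeeping, the only genuine inequality being \ref{v}. The single conceptual point---and hence the place I would be most careful---is recognising that $(\ast)$ exhibits $q_g(y)$ as a conditional average of $q_f(y)$, so that \ref{v} is exactly the attendant Jensen (equivalently Cauchy--Schwarz) inequality; this is also the estimate that powers the monotone, bounded net in inequality \eqref{1} of the proof of Theorem \ref{RN2.10}.
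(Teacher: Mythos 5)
Your proof is correct and takes essentially the same route as the paper's: parts \ref{i}--\ref{iv} by fibre-disjointness bookkeeping, part \ref{v} by convexity of $t\mapsto t^2$ applied to the weights $p_a/q_b$, and part \ref{vi} by expanding the square and cancelling the cross term using \ref{ii} and \ref{iv}. Your only departures are presentational and tighten the paper's argument slightly: you isolate the cone identity $q_g(y)(b)\,q_b=\sum_{a\in s^{-1}(b)}p_a\,q_f(y)(a)$ up front (the paper uses it silently, having derived it in the proof of Theorem \ref{RN2.10}) and you treat the $q_b=0$ case explicitly, which the paper glosses over in \ref{v}.
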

\begin{proof}
For \ref{i}, 
\begin{align*}
    s_g^ys_f^y &=\sum_{a\in A,b\in B}q_f(y)(a)q_g(y)(b)1_{f^{-1}(a)\cap g^{-1}(b)}\\
    &= \sum_{a\in A,b\in B}q_f(y)(a)q_g(y)(b)1_{f^{-1}(s^{-1}(b)\cap \{a\})} \\
    &=\sum_{b\in B}\sum_{a\in s^{-1}(b)}q_f(y)(a)q_g(y)(b)1_{f^{-1}(a)} \\
    & =\sum_{b\in B} q_g(y)(b) \sum_{a\in s^{-1}(b)} q_f(y)(a)1_{f^{-1}(a)}
\end{align*}
Integration \ref{i} gives us \[\mathbb{E}[s_f^ys_g^y]=\sum_{b\in B}q_g(y)(b)\sum_{a\in s^{-1}(b)}q_f(y)(a)p_a = \sum_{b\in B}q_g(y)(b)q_g(y)(b)q_b\]
This implies \ref{ii}. The result \ref{iii} and \ref{iv} follow from \ref{i} an \ref{ii}.
For $q_b\not=0$, 
\[q_g(y)(b)^2=\left(\sum_{a\in s^{-1}(b)}q_f(y)\frac{p_a}{q_b}\right)^2\leq \sum_{a\in s^{-1}(b)}q_f(y)^2\frac{p_a}{q_b}.\]
Here we used that $x\mapsto x^2$ defines a convex function. Multiplying both sides by $q_b$ and summing over $b\in B$, gives us \ref{v}.
For \ref{vi}
\begin{align}
    \mathbb{E}[(s_f^y)^2]& =\mathbb{E}[(s_f^y-s_g^y+s_g^y)^2]\\
   &=\mathbb{E}[(s_f^y-s_g^y)^2] + \mathbb{E}[(s_g^f)^2] + 2 (\mathbb{E}[s_f^ys_g^2]-\mathbb{E}[(s_g^y)^2])
    & = \mathbb{E}[(s_f^y-s_g^y)^2] + \mathbb{E}[(s_g^f)^2].
\end{align}
In the fourth equality we used that $\mathbb{E}[s_f^ys_g^2]-\mathbb{E}[(s_g^y)^2]=0$ by \ref{ii} and \ref{iv}. 
\end{proof}
The first inequality in the proof Theorem \ref{RN2.10} that we needed to show is exactly Lemma \ref{appendix}\ref{v}. For the second inequality, we combine Lemma \ref{appendix}\ref{vi} with Jensen's inequality. 
\end{appendices}
\bibliographystyle{abbrv}
%\nocite{*}
\bibliography{ref.bib}
\end{document}